\numberwithin{equation}{section}
\newtheorem{satz}{Satz}[section]
\newtheorem{theorem}[satz]{Theorem}
\newtheorem{proposition}[satz]{Proposition}
\newtheorem{corollary}[satz]{Corollary}
\newtheorem{lemma}[satz]{Lemma}
\newtheorem{remarks}[satz]{Remarks}
\DeclareMathOperator{\E}{{\mathbb E}}
\DeclareMathOperator{\R}{{\mathbb R}}
\DeclareMathOperator{\PP}{{\mathbb P}}
\DeclareMathOperator{\QQ}{{\mathbb Q}}
\DeclareMathOperator{\argmin}{argmin}
\DeclareMathOperator{\Var}{Var} \DeclareMathOperator{\Cov}{Cov}
\providecommand{\eps}{\varepsilon}
\renewcommand{\phi}{\varphi}
\renewcommand{\theta}{\vartheta}
\renewcommand{\subset}{\subseteq}
\renewcommand{\cdot}{{\scriptstyle \bullet} }
\providecommand{\abs}[1]{\lvert #1 \rvert}
\providecommand{\norm}[1]{\lVert #1 \rVert}
\providecommand{\babs}[1]{{\Bigl\lvert #1 \Bigr\rvert}}
\providecommand{\scapro}[2]{\langle #1,#2 \rangle}
\providecommand{\floor}[1]{\lfloor #1 \rfloor}
\newcommand{\tmm}{t}
\newcommand{\tmmpar}{\tmm_{\beta,p,R}(\delta)}
\newcommand{\mtc}{\mathcal}
\newcommand{\mbf}{\mathbf}
\newcommand{\mbe}{\mathbb{E}}
\def\cO{{\mtc{O}}}
\def\cR{{\mtc{R}}}
\newcommand{\wh}[1]{{\widehat{#1}}}
\newcommand{\ind}[1]{{\mbf{1}(#1)}}
\newcommand{\brac}[1]{\left[#1\right]}
\newcommand{\set}[1]{\left\{#1\right\}}
\newcommand{\e}[1]{\mbe\brac{#1}}
\newcommand{\LeftEqNo}{\let\veqno\@@leqno}
\newcounter{nbdrafts}
\newcommand{\checknbdrafts}{
\ifnum \thenbdrafts > 0
\@latex@warning@no@line{**********************************************************************}
\@latex@warning@no@line{* The document contains \thenbdrafts \space draft note(s)}
\@latex@warning@no@line{**********************************************************************}
\fi}
\begin{document}

\renewcommand{\MR}{\color{blue}}
\newcommand{\GB}{\color{green}}
\newcommand{\MH}{\color{orange}}

\begin{frontmatter}

\title{Early stopping for statistical inverse problems\\ via truncated SVD estimation
}

\runtitle{Early stopping for truncated SVD}

\begin{aug}
  \author{\fnms{Gilles}  \snm{Blanchard}}

  \affiliation{Universit\"at Potsdam, Germany}

  \address{Institut f\"ur Mathematik\\ Universit\"at Potsdam, Germany\\ 
   gilles.blanchard@math.uni-potsdam.de}

\end{aug}
\begin{aug}
  \author{\fnms{Marc}  \snm{Hoffmann}}

  \affiliation{Universit\'e Paris-Dauphine \& PSL, France}

  \address{CEREMADE\\Universit\'e Paris-Dauphine \& PSL, France 
  \\ hoffmann@ceremade.dauphine.fr}

\end{aug}
\begin{aug}
  \author{\fnms{Markus}  \snm{Rei{\ss}}}

 \runauthor{G. Blanchard, M. Hoffmann and M. Rei{\ss}}

  \affiliation{Humboldt-Universit\"at zu Berlin, Germany}

  \address{Institut f\"ur Mathematik\\ Humboldt-Universit\"at zu Berlin, Germany\\
  mreiss@mathematik.hu-berlin.de}

\end{aug}

\begin{abstract}
We consider truncated SVD (or spectral cut-off, projection) estimators for a prototypical statistical inverse problem in dimension $D$.
Since calculating the singular value decomposition (SVD) only  for the largest singular values is much less costly than the full SVD,
our aim is to select a data-driven truncation level $\widehat m\in\{1,\ldots,D\}$ only based on the knowledge of the first $\widehat m$ singular values and vectors.

 We analyse in detail whether sequential {\it early stopping} rules of this type can preserve statistical optimality.  Information-constrained lower bounds and matching upper bounds for a residual based stopping rule are provided, which give a clear picture in which situation optimal sequential adaptation is feasible. Finally, a hybrid two-step approach is proposed which allows for classical oracle inequalities while considerably reducing numerical complexity.
\end{abstract}

\begin{keyword}[class=AMS]
\kwd{65J20, 62G07}
\end{keyword}

\begin{keyword}
\kwd{Linear inverse problems; Truncated SVD; Spectral cut-off; Early stopping. Discrepancy principle; Adaptive estimation; Oracle inequalities}
\end{keyword}

\end{frontmatter}

\section{Introduction and overview of results}

\subsection{Model}

A classical model for statistical inverse problems is the  observation of
\begin{equation} \label{eq:WNM}
Y=A\mu+\delta \dot W
\end{equation}
where $A:H_1\to H_2$ is a linear, bounded operator between real Hilbert spaces $H_1,H_2$, $\mu\in H_1$ is the signal of interest, $\delta>0$ is the noise level and $\dot W$ is a Gaussian white noise in $H_2$, see {\it e.g.} Bissantz {\it et al.} \cite{BHMR}, Cavalier \cite{C} and the references therein. { In any concrete situation the problem is discretised, for instance by using a Galerkin scheme projecting on finite element or other approximation spaces.}
Therefore we can assume $H_1=\R^D$, $H_2=\R^P$ with possibly very large $D$ and $P$. Since the discretisation of $\mu$ is at our choice,  we assume
$D\le P$, and that $A:\R^D\to\R^P$ is one-to-one. We  transform \eqref{eq:WNM} by the singular value decomposition (SVD) of $A$ into the  Gaussian vector observation model
\begin{equation} \label{eq: def seq model}
Y_i = \lambda_i \mu_i+ \delta \eps_i,\;\;i=1,\ldots, D,
\end{equation}
where $\lambda_1\ge \lambda_2\ge\cdots\ge\lambda_D>0$ are the  nonzero singular values of $A$, $(\mu_i)_{1\le i\le D}$ the coefficients of $\mu$ in the orthonormal basis of singular vectors and
$(\eps_i)_{1 \leq i \leq D}$ are independent standard Gaussian random variables. The results will easily extend to subgaussian errors as discussed below.

Working in the SVD representation \eqref{eq: def seq model}, the objective is to recover the signal $\mu = (\mu_i)_{1 \le i \le D}$ with best possible accuracy from the data $(Y_i)_{1 \le i \le D}$. A classical method is to use the truncated SVD  estimators (also called projection or spectral cut-off estimators) $\widehat\mu^{(m)}$, $0\leq m \leq D$, given by
\begin{equation}
  \label{eq:defest}
  \wh{\mu}^{(m)}_i={\bf 1}(i\le m)\lambda_i^{-1}Y_i, \qquad i=1,\ldots,D,
  \end{equation}
which are ordered with decreasing bias and increasing variance (w.r.t. $m$). Choosing a suitable truncation index $\widehat m  = \widehat m(Y)$ from the
observed data is the genuine problem of adaptive model selection.
Typical methods use (generalized) cross validation, see {\it e.g.} Wahba \cite{Wa}, unbiased risk estimation, see {\it e.g.} Cavalier {\it et al.} \cite{CGPT}, penalized empirical risk minimisation, see {\it e.g.} Cavalier and Golubev \cite{CG}, or Lepski's balancing principle for inverse problems, see {\it e.g.} Math\'e and Pereverzev \cite{MP}. They all share the drawback that  the estimators $\widehat \mu^{(m)}$ have first to be computed for all values of $0 \le m \le D$, and then be compared to each other in some way.

In this work, we are motivated by constraints due to the possible obstructive computational complexity of calculating the full SVD in high dimensions. We stress that the initial
discretisation of the observation is generally based on a fixed scheme which
  does not  deliver a representation of the observation vector $Y$ in an SVD basis;
  nor is the full SVD basis of the discretized operator $A$ {\em a priori} available in general, so that it has to be computed on the fly.
  Since the calculation of the largest singular value and its corresponding subspace is much less costly, efficient numerical algorithms rely on {\it deflation} or {\it locking} methods, which achieve the desired accuracy for the larger singular values first and then iteratively achieve the accuracy also for the next smaller singular values. { As a basic example the popular power method can be considered, which usually finds after a few vector-matrix multiplications the top eigenvalue-eigenvector pair (with exponentially small error in the iteration number), so that by iterative application the largest $m$ singular values and vectors are computed with roughly ${\mathcal O}(mD^2)$ multiplications compared to ${\mathcal O}(D^3)$ multiplications for a full SVD in a worst case scenario. We refer to the monograph by Saad \cite{Saad} for a comprehensive exposition of the numerical methods. }

We investigate the possibility of an approach which
is both statistically efficient and  sequential along the SVD
in the following sense: we aim at {\em early stopping} methods, in which
the truncated SVD estimators $\widehat \mu^{(m)}$ for $m=0, 1,\ldots,$ are computed iteratively, a stopping rule decides to stop at some step $\widehat m$ and then  $\widehat \mu^{(\widehat m)}$ is used as the estimator.

More generally, we envision our setting as a  simple and prototypical model
to study the scope of statistical adaptivity using iterative methods, which are widely used in
computational statistics and learning. A notable feature of these methods is that not only the numerical, but also the statistical complexity ({\it e.g.}, measured by the variance) increases with the number of iterations,
 so that early stopping is essential from both points of view.
It is common to use stopping rules based on monitoring the residuals
 because the user has access without substantial additional cost
 to the residual norm. Observe that the computation of the residual norm
   $\norm{Y-A\widehat\mu^{(m)}}^2 = \norm{Y}^2 - \sum_{i=1}^m Y_i^2$ does not
   require the full SVD, but only the knowledge of the $m$ first coefficient and
   of the full norm $\norm{Y}^2$, which is readily available. The properties of such rules have been well studied for deterministic inverse problems (e.g. the discrepancy principle, see Engl {\it et al.} \cite{EHN}). In a statistical setting, minimax optimal solutions along the iteration path have been identified in different settings, see {\it e.g.} Yao {\it et al.} \cite{YRC} for gradient descent learning,  Blanchard and Math\'e \cite{BM} for conjugate gradients, Raskutti, Wainwright and Yu \cite{RW} for (reproducing) kernel learning and B\"uhlmann and Hothorn \cite{BH} for the application to $L^2$-boosting.  All these methods stop at a fixed iteration step\footnote{ The stopping rule in \cite{RW} for a random design setting is data-dependent because the distribution of
     the design is unknown. The stopping iteration becomes fixed when the design
     distribution is known, the target function belonging to the unit ball of the kernel space.}, depending on the prior knowledge of the smoothness of the unknown solution.

     By contrast, our goal  is to analyse an {\it a posteriori} early stopping rule based on monitoring the residual, which corresponds to proposals by practitioners in the absence of prior smoothness information.
      { An analysis of such a stopping rule for quite general spectral estimators like Landweber iteration  is provided in
the companion paper \cite{BHR2018}. Although the general results can also be applied to the truncated SVD method, we exhibit a more transparent analysis for this prototypical method which gives more satisfactory results: we establish coherent lower  bounds and we obtain adaptivity in strong norm via the oracle property, while for more general spectral estimators  only rate results over Sobolev-type classes can be achieved. Moreover, a hybrid two-step procedure enjoys full adaptivity for the truncated SVD-method.}

\subsection{Non-asymptotic oracle approach}

Our approach is a priori non-asymptotic and concentrates on oracle optimality analysis for
individual signals. The  oracle approach compares the error of $\widehat \mu^{(\widehat m)}$ to the minimal error among $(\widehat\mu^{(m)})_m$ for any signal $\mu$ individually, which entails optimal adaptation in minimax settings, see e.g. Cavalier  \cite{C}.

The  risk (mean integrated  squared error)
for a fixed truncated SVD estimator $\widehat \mu^{(m)}$ obeys a standard squared bias-variance decomposition
\[
\E\big[\|\widehat \mu^{(m)}-\mu\|^2\big] = B_m^2(\mu)+V_m\,,
\]
where $\norm{\cdot}$ denotes the Euclidean norm in $\R^D$, and
\begin{align}
  B_m^2(\mu)  &:= \E\big[\|\E[\widehat \mu^{(m)}]-\mu\|^2\big]  =
 \textstyle  \sum_{i = m+1}^D \mu_i^2, \label{eq: strong bias}\\
   V_m  & := \E\big[\| \widehat\mu^{(m)} - \E[\widehat \mu^{(m)}]\|^2\big] = \textstyle \delta^2\sum_{i = 1}^m \lambda_i^{-2}.
  \label{eq: strong var}
\end{align}
In distinction with the weak norm quantities defined below,
we call $B_m(\mu)$ {\it strong bias} of $\mu$ and $V_m$ {\it strong variance}.

If we have access to the residual squared norm
\begin{equation} \label{def residual}
\textstyle  R_m^2 :=\|Y-A\widehat \mu^{(m)}\|^2 =\norm{Y}^2-\norm{A\widehat\mu^{(m)}}^2= \sum_{i=1}^D (Y_i - \lambda_i \widehat \mu_i^{(m)})^2 = \sum_{i=m+1}^D Y_i^2,
\end{equation}
then  $R_m^2-(D-m)\delta^2$ gives some bias information  due to
\[ \textstyle \E[R_m^2-(D-m)\delta^2]=B_{m,\lambda}^2(\mu),\quad \text{ with }\quad B_{m,\lambda}^2(\mu):=\sum_{i=m+1}^D \lambda_i^2\mu_i^2.
\]
We call $B_{m,\lambda}^2(\mu)$ the {\it weak bias} and similarly $V_{m,\lambda}=m\delta^2$ the {\it weak variance}. They correspond to measuring the error in the {\it weak norm} (or prediction norm) $\norm{v}_\lambda^2:=\norm{Av}^2=\sum_{i=1}^D\lambda_i^2v_i^2$, which usually (always if $\lambda_1< 1$) is  smaller than the {\it strong} Euclidean norm $\norm{\cdot}$. The  squared bias-variance decomposition for the weak risk then reads
$\E\big[\|\widehat \mu^{(m)}-\mu\|^2_\lambda\big] = B_{m,\lambda}^2(\mu)+V_{m,\lambda}$.
Our setting is thus a particular instance of the question raised by Lepski \cite{L} whether adaptation in one loss  (here: weak norm) leads to adaptation in another loss (here: strong norm). Our positive answer for truncated SVD or spectral cut-off estimation will also extend the results by Chernousova {\it et al.} \cite{CGK}.

Intrinsic to the sequential analysis is the fact  that at truncation index $m$ we cannot say anything about the way the bias decreases for larger indices: it may drop to zero at $m+1$ or even stay constant until $D-1$. Even if we knew the exact value of the bias until index $m$, we could not minimise the sum of squared bias and variance sequentially. Instead, we should wait until the squared bias is sufficiently small to equal (approximately) the variance. This leads to the notion of the  {\em strongly balanced oracle}
\begin{align}\label{Eqms}
  m_{\mathfrak s} = m_{\mathfrak s}(\mu) & :=\min\{m\in\{0,\ldots,D\}\,|\, V_m\ge B_m^2(\mu)\},
\end{align}
whose risk is always upper bounded by twice the classical oracle risk, see \eqref{EqBalClassOracle} below.

\subsection{Setting for asymptotic considerations}
\label{se:asymptsetting}

Risk estimates over classes of signals and asymptotics for vanishing noise level $\delta\to 0$ often help to reveal main features. This way, we can also provide lower bounds for sequential estimation procedures and compare them directly to classical minimax convergence rates. In our setting, the magnitude of the discretisation
dimension $D$ plays a central role, so that it is sensible to assume in an asymptotic view that $D=D_\delta \rightarrow \infty$ as $\delta \rightarrow 0$.
As classes of signals, we will consider  the  Sobolev-type ellipsoids
\begin{equation} \label{def:Sobolev ellipsoid_D}
\textstyle H^\beta(R,D):=\{\mu\in\R^D\,|\,\sum_{i=1}^D i^{2\beta}\mu_i^2\le R^2\}, \;\;\beta\ge 0, R>0,
\end{equation}
and we shall use the following polynomial spectral decay assumption
\begin{equation}
  \label{eq:polydecay}
C_A^{-1} i^{-p} \le \lambda_i \le C_A i^{-p}, \qquad 1 \leq i \leq D, \tag{{\bf PSD}($p,C_A$)}
\end{equation}
for $p\ge 0$, $C_A \geq 1$. The spectrum is allowed to change with $D$ and $\delta$, but $p, C_A$ are considered as fixed constants.
Under these assumptions, standard computations yield for $\mu \in H^\beta(R,D)$,
$1 \leq m \leq D$:
\[
B^2_m(\mu) \leq R^2 m^{-2\beta} \,; \qquad V_m \leq C_A^{-2} \delta^2 m^{2p+1}\,.
\]
Balance between these squared bias and variance bounds is obtained for
$m$ of the order of the minimax truncation ``time''
\begin{equation}\label{Eqtmm}
\tmmpar:= (R^{-1}\delta)^{-2/(2\beta+2p+1)},
\end{equation}
provided
the condition $D \geq \tmmpar$ holds. This gives rise to the risk rate
\[
{\cal R}^*_{\beta,p,R}(\delta) :=  R (R^{-1}\delta)^{2\beta/(2\beta+2p+1)},
\]
which agrees with the optimal minimax rate in the standard Gaussian sequence model
(i.e. $D=\infty$). On the other hand, for $D \lesssim \tmmpar $ the choice
 $m=D$ is optimal on $H^\beta(R,D)$ and the rate degenerates to $ \mathcal{O}(D^{2p+1}\delta^2)$. This situation is indicative of an insufficient discretisation and  will be excluded  from the asymptotic considerations.

\subsection{Overview of results}

Our results consist of lower and upper bounds for sequentially adaptive stopping rules.
 The stopping rules  permitted are
most conveniently described in terms of stopping times with respect to an appropriate filtration.
Introduce the {\em frequency filtration}
\begin{equation}
  \label{def frequency filtration}
\mathcal F_m := \sigma\big( \widehat\mu^{(0)},\ldots,\widehat\mu^{(m)}\big)= \sigma\big(Y_1,\ldots, Y_m\big),
\end{equation}
 $\mathcal F_0$ being the trivial sigma-field. Stopping rules with respect to the filtration $\mathcal F=(\mathcal F_m)_{0\le m\le D}$ must decide whether to halt and output $\widehat\mu^{(m)}$
based only on the information of the first $m$ estimators.
Statistical adaptation will turn out to be essentially impossible for such stopping rules (Section~\ref{sec: freq filtration}).
If the residual \eqref{def residual}
is available at no substantial computational cost, taking this information into account, we define the {\em residual filtration}
\begin{equation} \label{def residual filtration}
\mathcal G_m :=\mathcal F_m \vee \sigma(R_0^2,\ldots,R_m^2)=\mathcal F_m \vee \sigma\big(\|Y\|^2\big),
\end{equation}
which is the filtration $\mathcal F_m$ enlarged by the residuals up to index $m$.

Pushing some technical details aside, the main message conveyed by our lower bounds is that oracle statistical adaptation
with respect to the residual filtration is {\it impossible} for signals $\mu$ such that the strongly balanced oracle
$m_{\mathfrak s}(\mu)$  is $o( \sqrt{D})$.
(Section~\ref{section residual filtration}). On the other hand, we establish in Section~\ref{sec:upper bounds}
that this statement is sharp, in the sense that the simple residual-based stopping rule
\begin{equation}
\label{def early stopping}
  \tau = \min\big\{m\ge m_0\,|\,R_m^2\le \kappa \big\},
\end{equation}
with a proper choice of $\kappa$ and $m_0$
is  statistically adaptive for signals $\mu$ such that $m_{\mathfrak s}(\mu) \gtrsim \sqrt{D}$. Let us stress that by minimax adaptive we always
mean that the procedure attains the optimal rate even among all methods with access to the entire data, that is without information constraints.

Finally, in Section~\ref{se: two-step} we introduce a hybrid two-step approach consisting
of the above stopping rule with  $m_0\sim \sqrt{D} \log D$,
followed by a traditional (non-sequential) model selection procedure over $m\leq m_0$,
in the case where $\tau=m_0$ (immediate stop hinting at an optimal index smaller than $m_0$).
This procedure enjoys full oracle adaptivity
at a computational cost of calculating on average the first $\cO(\max(\sqrt{D}\log D,m_{\mathfrak s}(\mu)))$ singular values, to be compared to the full SVD with $D$ singular values in non-sequential adaptation. Some numerical simulations illustrate the
theoretical analysis. Technical proofs are gathered in an appendix.

\section{Lower bounds} \label{sec: lower bounds}



\subsection{The frequency filtration} \label{sec: freq filtration}

Let $\tau$ be an $\mathcal F$-stopping time, where $\mathcal F$ is the frequency filtration defined in \eqref{def frequency filtration} and let\footnote{We emphasise in the notation the dependence on $\mu$ in the distribution of $\tau$ and the $Y_i$ by adding the subscript $\mu$ when writing the expectation $\E=\E_\mu$ or probability $P=P_\mu$.}
$${\cal R}(\mu,\tau)^2:=\E_\mu[\norm{\widehat\mu^{(\tau)}-\mu}^2].$$
By Wald's identity, we obtain  the simple formula
\begin{equation}\label{EqWald}
\textstyle {\cal R}(\mu,\tau)^2=\E_\mu\big[\sum_{i=\tau+1}^D\mu_i^2+\sum_{i=1}^\tau \lambda_i^{-2}\delta^2\eps_i^2\big]=\E_\mu\big[B_\tau^2(\mu)+V_\tau\big],
\end{equation}
with $B_m^2(\mu)$ and $V_m$  from
\eqref{eq: strong bias}, \eqref{eq: strong var}.
This implies in particular that an oracle stopping time, {\it i.e.},  an optimal $\mathcal F$-stopping time constructed using the knowledge of $\mu$, coincides with the deterministic oracle $\argmin_m\big(B_m^2(\mu)+V_m\big)$ almost surely. The next proposition encapsulates the main argument
  for the lower bound and merely relies on a two-point analysis.
It clarifies that if the stopping time $\tau$ yields a squared risk comparable to
the optimally balanced risk for a given signal $\mu$, then
this signal can be changed arbitrarily to $\bar\mu$ after the index $\floor{3Cm_{\mathfrak s}(\mu)}$, while the risk for the rule $\tau$ always stays larger than the squared bias of that part -- which can be made
arbitrarily large by ``hiding'' signal in large-index coefficients.

\begin{proposition}
\label{prop:case1lower}
Let $\mu,\bar\mu\in\R^D$ with $\mu_i=\bar\mu_i$ for all $i\le i_0$ and $i_0\in\{1,\ldots,D-1\}$. Then  any $\mathcal F$-stopping rule $\tau$ satisfies
\[ {\cal R}(\bar\mu,\tau)^2\ge B_{i_0}^2(\bar\mu)\Big(1-\frac{{\cal R}(\mu,\tau)^2}{V_{i_0+1}}\Big).\]
Suppose ${\cal R}(\mu,\tau)^2\le C {\cal R}(\mu,m_{\mathfrak s})^2$ for the balanced oracle $m_{\mathfrak s}$ in \eqref{Eqms} and some $C\ge 1$. Then
for any $\bar\mu\in\R^D$ with $\bar\mu_i=\mu_i$ for $i\le 3Cm_{\mathfrak s}$ we obtain
\[ {\cal R}(\bar\mu,\tau)^2\ge \tfrac13 B_{\floor{3Cm_{\mathfrak s}}}^2(\bar\mu).\]
\end{proposition}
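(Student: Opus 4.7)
The plan is to combine the Wald decomposition \eqref{EqWald} of the risk with a two-point change-of-measure argument exploiting that an $\mathcal F$-stopping time cannot distinguish $\mu$ from $\bar\mu$ on the event $\{\tau \le i_0\}$. For the first inequality, I would discard the variance term and use that $m\mapsto B_m^2(\bar\mu)=\sum_{i=m+1}^D\bar\mu_i^2$ is non-increasing to write
\[
\mathcal R(\bar\mu,\tau)^2 \;\ge\; \E_{\bar\mu}\big[B_\tau^2(\bar\mu)\,\mathbf{1}(\tau\le i_0)\big] \;\ge\; B_{i_0}^2(\bar\mu)\,\PP_{\bar\mu}(\tau\le i_0).
\]
The central observation is that $\{\tau\le i_0\}\in\mathcal F_{i_0}=\sigma(Y_1,\ldots,Y_{i_0})$ and, because $\mu_i=\bar\mu_i$ for $i\le i_0$, the joint law of $(Y_1,\ldots,Y_{i_0})$ is the same under $\PP_\mu$ and $\PP_{\bar\mu}$; hence $\PP_{\bar\mu}(\tau\le i_0)=\PP_\mu(\tau\le i_0)$. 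I then apply Wald in the opposite direction: since $V_m$ is non-decreasing,
\[
\mathcal R(\mu,\tau)^2 \;\ge\; \E_\mu\big[V_\tau\,\mathbf{1}(\tau>i_0)\big] \;\ge\; V_{i_0+1}\,\PP_\mu(\tau>i_0),
\]
so that $\PP_\mu(\tau\le i_0)\ge 1-\mathcal R(\mu,\tau)^2/V_{i_0+1}$, proving the first claim.

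For the second assertion I specialise to $i_0:=\floor{3Cm_{\mathfrak s}}$. By definition of the strongly balanced oracle $V_{m_{\mathfrak s}}\ge B_{m_{\mathfrak s}}^2(\mu)$, whence $\mathcal R(\mu,m_{\mathfrak s})^2\le 2V_{m_{\mathfrak s}}$ and the hypothesis gives $\mathcal R(\mu,\tau)^2\le 2C V_{m_{\mathfrak s}}$. It then suffices to verify $V_{i_0+1}\ge 3C V_{m_{\mathfrak s}}$, since plugging this into the first inequality produces the factor $1-2C/(3C)=1/3$ in front of $B_{i_0}^2(\bar\mu)=B_{\floor{3Cm_{\mathfrak s}}}^2(\bar\mu)$. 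The variance comparison is pure bookkeeping with the monotone sequence $\lambda_i^{-2}$: one has
\[
V_{m_{\mathfrak s}} \;\le\; m_{\mathfrak s}\,\delta^2\lambda_{m_{\mathfrak s}+1}^{-2}, \qquad V_{i_0+1}-V_{m_{\mathfrak s}} \;\ge\; (i_0+1-m_{\mathfrak s})\,\delta^2\lambda_{m_{\mathfrak s}+1}^{-2},
\]
and since $i_0+1\ge 3Cm_{\mathfrak s}$ the right-hand difference is at least $(3C-1)V_{m_{\mathfrak s}}$, yielding $V_{i_0+1}\ge 3C V_{m_{\mathfrak s}}$ as required.

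The only step that genuinely uses something beyond monotonicity of $B_m^2$ and $V_m$ is the distributional identity $\PP_{\bar\mu}(\tau\le i_0)=\PP_\mu(\tau\le i_0)$, which is really what encodes the sequential information constraint built into the frequency filtration $\mathcal F$; I do not foresee any obstacle there, but it is the step that must be stated carefully so that the two-point construction is indeed adversarial. Everything else reduces to the Wald identity \eqref{EqWald} and the elementary variance comparison above.
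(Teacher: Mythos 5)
Your proof is correct and follows essentially the same route as the paper's: a two-point change-of-measure argument using that $\{\tau\le i_0\}\in\mathcal F_{i_0}$ has the same probability under $\PP_\mu$ and $\PP_{\bar\mu}$, combined with the Wald identity and monotonicity of $B_m^2$ and $V_m$. The only cosmetic difference is in the final variance comparison, where you prove $V_{i_0+1}\ge 3CV_{m_{\mathfrak s}}$ by splitting the sum, while the paper invokes the equivalent monotonicity fact $V_{m_{\mathfrak s}}/V_{i_0+1}\le m_{\mathfrak s}/(i_0+1)$; both rest on $\lambda_i^{-2}$ being non-decreasing.
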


\begin{proof}
  We use the fact that $(Y_i)_{1 \le i \le i_0}$ has the same law under $P_\mu$ and $P_{\bar\mu}$ and so has ${\bf 1}(\tau \le i_0)$ by the stopping time property of $\tau$. Moreover, thanks to the monotonicity of $m\mapsto V_m$ and $m \mapsto B_m^2(\bar \mu)$, Markov's inequality and  identity \eqref{EqWald}:
\begin{align*}
{\cal R}(\bar\mu,\tau)^2 &\ge \E_{\bar\mu}[B_\tau^2(\bar\mu){\bf 1}(\tau\le i_0)]\\
&=\E_{\mu}[B_\tau^2(\bar\mu){\bf 1}(\tau\le i_0)]\\
&\ge B_{i_0}^2(\bar\mu)P_\mu(\tau\le i_0)\\
&\ge B_{i_0}^2(\bar\mu)(1-P_\mu(V_\tau\ge V_{i_0+1}))\\
&\ge B_{i_0}^2(\bar\mu)\Big(1-\frac{\E_\mu[V_\tau]}{V_{i_0+1}}\Big)\\
&\ge B_{i_0}^2(\bar\mu)\Big(1-\frac{{\cal R}(\mu,\tau)^2}{V_{i_0+1}}\Big).
\end{align*}
The second assertion follows by inserting $i_0=\floor{3Cm_{\mathfrak s}}$ and ${\cal R}(\mu,\tau)^2\le 2CV_{m_{\mathfrak s}}$ together with $V_{m_{\mathfrak s}}/V_{i_0+1}\le m_{\mathfrak s}/(i_0+1)$ since  the singular values $\lambda_i$ are non-increasing.
\end{proof}

In Appendix \ref{SecPrCor22} we use this proposition to provide a result suitable for asymptotic interpretation (we use the notation from
Section~\ref{se:asymptsetting}):

\begin{corollary} \label{cor:case1lower}
Assume \eqref{eq:polydecay} and let $\tau$ be any $\mathcal F$-stopping rule. If there exists $\mu\in H^\beta(R, D)$ with ${\cal R}(\mu,\tau)\le C_\mu {\cal R}^*_{\beta,p,R}(\delta)$, then for any  $\alpha\in [0,\beta]$, $\bar R\ge 2R$, there exists $\bar\mu\in H^\alpha(\bar R, D)$ such that
\[{\cal R}(\bar\mu,\tau)\ge c_1  \bar R(R^{-1}\delta)^{2\alpha/(2\beta+2p+1)}\,,
\]
provided $D\ge c_2 \tmmpar$. The constants $c_1,c_2>0$ only depend on $C_\mu$ and $C_A$.
\end{corollary}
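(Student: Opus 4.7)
The plan is to apply the first bound in Proposition~\ref{prop:case1lower} with a cutoff $i_0$ of order $\tmmpar$ and then to construct $\bar\mu$ by placing extra signal mass beyond $i_0$ while still respecting the $H^\alpha(\bar R,D)$ constraint.

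First, under \eqref{eq:polydecay} one has $V_m \geq c\,\delta^2 m^{2p+1}$ with $c$ depending only on $C_A$, and a direct computation shows $\delta^2\, \tmmpar^{2p+1} \asymp (\mathcal R^*_{\beta,p,R}(\delta))^2$ by the very definitions of $\tmmpar$ and $\mathcal R^*_{\beta,p,R}$. Hence the hypothesis $\mathcal R(\mu,\tau) \leq C_\mu\,\mathcal R^*_{\beta,p,R}(\delta)$ rewrites as $\mathcal R(\mu,\tau)^2 \leq C'\,\delta^2\,\tmmpar^{2p+1}$ with a constant $C' = C'(C_\mu,C_A)$. I would then choose $i_0 := \lceil K\,\tmmpar \rceil$ with $K = K(C_\mu,C_A,p)$ large enough that $V_{i_0+1} \geq 2\,\mathcal R(\mu,\tau)^2$; the assumption $D \geq c_2\,\tmmpar$ with $c_2 := 2K$ ensures $i_0+1 \leq D$, so the first statement of Proposition~\ref{prop:case1lower} applies and gives
\[
\mathcal R(\bar\mu,\tau)^2 \;\geq\; \tfrac12\, B_{i_0}^2(\bar\mu)
\]
for any $\bar\mu$ that agrees with $\mu$ on its first $i_0$ coordinates.

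Next I would define $\bar\mu_i := \mu_i$ for $i \leq i_0$, $\bar\mu_i := 0$ for $i > i_0+1$, and concentrate the remaining Sobolev budget at the single index $i_0+1$ by setting $\bar\mu_{i_0+1}^2 := (\bar R^2 - R^2)/(i_0+1)^{2\alpha}$. Since $\alpha \leq \beta$, the elementary inequality $\sum_{i\leq i_0} i^{2\alpha}\mu_i^2 \leq \sum_{i\leq i_0} i^{2\beta}\mu_i^2 \leq R^2$ ensures that $\bar\mu \in H^\alpha(\bar R,D)$, while $\bar R \geq 2R$ yields $\bar R^2 - R^2 \geq \tfrac34\, \bar R^2$. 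Substituting $i_0+1 \leq 2K\,\tmmpar$ together with $\tmmpar = (R^{-1}\delta)^{-2/(2\beta+2p+1)}$ then produces
\[
\mathcal R(\bar\mu,\tau)^2 \;\geq\; \tfrac12\,\bar\mu_{i_0+1}^2 \;\geq\; c_1^2\,\bar R^2\,(R^{-1}\delta)^{4\alpha/(2\beta+2p+1)},
\]
and the claim follows upon taking square roots.

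The main obstacle is bookkeeping: the constant $K$ must be taken large to absorb $C_\mu$ and $C_A$ in the threshold $V_{i_0+1} \geq 2\,\mathcal R(\mu,\tau)^2$ of the second step, but the very same $K$ appears in the denominator $(i_0+1)^{-2\alpha}$ of the third step and so shrinks the final prefactor. Carefully tracking this trade-off (and the implicit dependence on the fixed exponents $\alpha, \beta, p$) is the delicate part of the argument; all other steps are routine algebraic manipulations.
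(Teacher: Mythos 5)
Your proposal is correct and follows essentially the same route as the paper's proof: choose $i_0$ of order $\tmmpar$ (with a constant $K$ depending on $C_\mu,C_A,p$) so that $V_{i_0+1}\ge 2\,\mathcal R(\mu,\tau)^2$, invoke the first bound of Proposition~\ref{prop:case1lower} to obtain $\mathcal R(\bar\mu,\tau)^2\ge\tfrac12 B_{i_0}^2(\bar\mu)$, and then perturb $\mu$ only at index $i_0+1$ using the spare Sobolev budget $\bar R^2-R^2$, exploiting $H^\beta(R,D)\subset H^\alpha(R,D)$ for $\alpha\le\beta$. The only cosmetic deviations (zeroing out $\bar\mu_i$ for $i>i_0+1$ rather than keeping $\bar\mu_i=\mu_i$, and the choice $\bar\mu_{i_0+1}^2=(\bar R^2-R^2)(i_0+1)^{-2\alpha}$ rather than $\tfrac14\bar R^2(i_0+1)^{-2\alpha}$) do not change the argument, and your remark about the $K$-versus-prefactor trade-off is exactly the bookkeeping the paper also performs implicitly.
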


The conclusion for impossible rate-optimal adaptation is a direct consequence of Corollary \ref{cor:case1lower}: since for any $\alpha<\beta$ the rate $\delta^{2\alpha/(2\beta+2p+1)}$ is suboptimal, no ${\cal F}$-stopping rule can adapt over Sobolev classes with different regularities.
Finally, the rate $\bar R(R^{-1}\delta)^{2\alpha/(2\beta+2p+1)}$ is attained by a deterministic stopping rule that stops at the oracle frequency for $H^\beta(R, D)$, so that the lower bound is in fact a sharp {\it no adaptation} result.

\subsection{Residual filtration} \label{section residual filtration}

We start with a key lemma, similar in spirit to the first
step in the proof of Proposition~\ref{prop:case1lower}, but valid for an
arbitrary random $\tau$. { Here and in the sequel the numerical values are not optimised, but give rise to more transparent proofs and convey some intuition for the worst case order of magnitude.} The proof is delayed until Appendix \ref{proof:lem:lowerstop}.
\begin{lemma}
\label{lem:lowerstop}
Let $\tau = \tau\big((Y_i)_{1 \le i \le D}\big)\in\{0,\ldots,D\}$ be an arbitrary (measurable) data-dependent index.
Then for any $m\in\{1,\ldots,D\}$ the following implication holds true:
\[
V_m \ge 200\, \cR(\mu,\tau)^2 \Rightarrow P_\mu(\tau \ge m)\le 0.9.
\]
\end{lemma}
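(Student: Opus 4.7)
The plan is to exploit the pointwise identity, valid for \emph{any} random index $\tau\in\{0,\ldots,D\}$ and thus independent of any stopping time property:
\begin{equation*}
\norm{\widehat\mu^{(\tau)}-\mu}^2 \;=\; \sum_{i=1}^{\tau}\delta^2\lambda_i^{-2}\eps_i^2 \;+\; \sum_{i=\tau+1}^{D}\mu_i^2 \;=:\; Z_\tau + B_\tau^2(\mu),
\end{equation*}
where $Z_m:=\delta^2\sum_{i=1}^m \lambda_i^{-2}\eps_i^2$ is a non-decreasing process in $m$ with $\E_\mu[Z_m]=V_m$. Discarding the squared bias and using that $Z_\tau\ge Z_m$ on the event $A:=\{\tau\ge m\}$, I obtain the key lower bound
\begin{equation*}
\cR(\mu,\tau)^2 \;\ge\; \E_\mu[Z_\tau \mathbf{1}_A] \;\ge\; \E_\mu[Z_m\mathbf{1}_A] \;=\; V_m-\E_\mu[Z_m\mathbf{1}_{A^c}].
\end{equation*}
This reduces the question to controlling $\E_\mu[Z_m\mathbf{1}_{A^c}]$ from above in terms of $P_\mu(A^c)$.

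For this I would apply Cauchy--Schwarz, giving $\E_\mu[Z_m\mathbf{1}_{A^c}]\le \bigl(\E_\mu[Z_m^2]\bigr)^{1/2}\bigl(P_\mu(A^c)\bigr)^{1/2}$, and then establish the second-moment estimate $\E_\mu[Z_m^2]\le 3V_m^2$. The latter is where monotonicity of the singular values enters: since $\Var(\eps_i^2)=2$,
\begin{equation*}
\E_\mu[Z_m^2] \;=\; V_m^2 + 2\delta^4\sum_{i=1}^m \lambda_i^{-4} \;\le\; V_m^2 + 2\delta^2\lambda_m^{-2}\,V_m \;\le\; 3V_m^2,
\end{equation*}
where the last step uses $\delta^2\lambda_m^{-2}\le V_m$ (the $m$-th summand in the definition of $V_m$) and the penultimate step uses $\sum_{i\le m}\lambda_i^{-4}\le \lambda_m^{-2}\sum_{i\le m}\lambda_i^{-2}$ because $(\lambda_i)$ is non-increasing.

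Combining the two displays yields $\cR(\mu,\tau)^2\ge V_m\bigl(1-\sqrt{3\,P_\mu(A^c)}\bigr)$. The contrapositive argument then concludes: if $P_\mu(\tau\ge m)>0.9$, so $P_\mu(A^c)<0.1$, then $\cR(\mu,\tau)^2\ge V_m(1-\sqrt{0.3})>V_m/200$, contradicting $V_m\ge 200\,\cR(\mu,\tau)^2$. The mildly subtle step is the second-moment computation—specifically recognising that the monotonicity of $(\lambda_i)$ allows bounding the variance of the weighted chi-squared $Z_m$ in terms of $V_m^2$ itself, without any assumption (e.g.\ polynomial decay) on the spectrum; everything else is arithmetic, and the numerical constant $200$ is a comfortable overkill that makes the argument transparent.
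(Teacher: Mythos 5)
Your proof is correct, and it takes a genuinely different route from the paper. Both arguments start from the same lower bound $\cR(\mu,\tau)^2 \ge \E_\mu[Z_m\mathbf{1}_{\{\tau\ge m\}}]$, but diverge in how they control the event $\{\tau < m\}$. The paper invokes the Laurent--Massart lower-deviation inequality for weighted $\chi^2$ sums to guarantee that $S_m \ge V_m/20$ with probability at least $0.2$, and then uses a union bound over the two events $\{\tau\ge m\}$ and $\{S_m\ge V_m/20\}$. You instead apply Cauchy--Schwarz to $\E_\mu[Z_m\mathbf{1}_{A^c}]$ and bound the second moment $\E_\mu[Z_m^2]\le 3V_m^2$ directly; this replaces the deviation inequality with a one-line variance computation and is arguably more elementary. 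One small remark on your closing observation: the monotonicity of the singular values is not actually what makes the second-moment bound work — the cruder inequality $\sum_{i\le m}\lambda_i^{-4}\le\bigl(\sum_{i\le m}\lambda_i^{-2}\bigr)^2$ (valid for any nonnegative terms) already gives $\Var(Z_m)\le 2V_m^2$ and hence $\E_\mu[Z_m^2]\le 3V_m^2$, so your argument in fact requires no structural assumption on the spectrum at all, just as the paper's does.
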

For $\mathcal G$-stopping rules, where $\mathcal G$ is the residual filtration defined in \eqref{def residual filtration}, we deduce the following lower bound, again based on a two-point argument:

\begin{proposition}
\label{prop:case2lower}
Let $\tau$ be an arbitrary ${\cal G}$-stopping rule. Consider  $\mu\in\R^D$ and $i_0\in\{1,\ldots,D\}$ such that $V_{i_0+1}\ge\, 200{\cal R}(\mu,\tau)^2$.  Then
\[ {\cal R}(\bar\mu,\tau)^2\ge 0.05 B_{i_0}^2(\bar\mu)\]
holds for any $\bar\mu \in\R^D$ that satisfies
 \begin{enumerate}
 \item $\mu_i=\bar\mu_i$ for all $i\le i_0$,
  \item the weak bias bound $\abs{B_{i_0,\lambda}^2(\bar\mu)-B_{i_0,\lambda}^2(\mu)}\le 0.05\frac{\sqrt{D-i_0}}{2}\delta^2$ and
  \item $B_{i_0,\lambda}(\mu) + B_{i_0,\lambda}(\bar\mu)\ge 5.25\delta$.
 \end{enumerate}
 Suppose that ${\cal R}(\mu,\tau)^2\le C_\mu {\cal R}(\mu,m_{\mathfrak s})^2$ holds with some $C_\mu\ge 1$. Then any $i_0\ge 400C_\mu m_{\mathfrak s}$ will satisfy the initial requirement.
\end{proposition}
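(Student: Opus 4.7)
My plan follows the two-point strategy of Proposition~\ref{prop:case1lower} but must accommodate the extra information in the residual filtration $\mathcal G$. First, I apply Lemma~\ref{lem:lowerstop} with $m=i_0+1$: the hypothesis $V_{i_0+1}\ge 200\,{\cal R}(\mu,\tau)^2$ immediately gives $P_\mu(\tau\ge i_0+1)\le 0.9$, so $P_\mu(\tau\le i_0)\ge 0.1$.

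Next I would transfer this probability to $P_{\bar\mu}$. Since $\{\tau\le i_0\}\in\mathcal G_{i_0}$ and the $\sigma$-algebra $\mathcal G_{i_0}$ is generated by $(Y_1,\ldots,Y_{i_0})$ together with the tail residual $S=\sum_{i>i_0}Y_i^2$, and since by condition (a) the initial block has identical laws under $P_\mu$ and $P_{\bar\mu}$ and is independent of $S$, the joint total-variation distance reduces to $\mathrm{TV}(\mathcal L_\mu(S),\mathcal L_{\bar\mu}(S))$. Under $P_\mu$ one has $S/\delta^2\sim\chi^2_k(\nu)$ with $k:=D-i_0$ and noncentrality $\nu:=B_{i_0,\lambda}^2(\mu)/\delta^2$ (and $\bar\nu$ under $P_{\bar\mu}$), so that conditions (b) and (c) translate into $\abs{\nu-\bar\nu}\le 0.025\sqrt k$ and $\sqrt\nu+\sqrt{\bar\nu}\ge 5.25$. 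The crux becomes establishing $\mathrm{TV}(\chi^2_k(\nu),\chi^2_k(\bar\nu))\le 0.05$ uniformly in $k$.

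This dimension-free TV bound is the main technical obstacle. A naive data-processing estimate---write $\chi^2_k(\nu)=|Z+a|^2$ for $|a|^2=\nu$ and reduce to a one-dimensional Gaussian shift---yields only $\mathrm{TV}\le|\sqrt\nu-\sqrt{\bar\nu}|/\sqrt{2\pi}$, which can grow like $\sqrt k$ under our conditions and is therefore insufficient. A sharper estimate exploits the fact that $\chi^2_k(\nu)$ is concentrated on scale $\sqrt{2k+4\nu}$: via the variance-stabilising transformation $X\mapsto\sqrt X$ one has $\sqrt X\approx{\cal N}(\sqrt{k+\nu},\tfrac12)$, and the identity
\[
\sqrt{k+\nu}-\sqrt{k+\bar\nu}=\frac{\nu-\bar\nu}{\sqrt{k+\nu}+\sqrt{k+\bar\nu}}
\]
reveals a Gaussian mean shift of order $\abs{\nu-\bar\nu}/(2\sqrt k)\le 0.0125$, producing a $k$-independent TV bound of order $0.025/\sqrt\pi\approx 0.014$. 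Condition (c) enters crucially here, ensuring the noncentrality is large enough for the Gaussian approximation of $\sqrt X$ to be sufficiently accurate. An alternative route is to compute $\mathrm{KL}(\chi^2_k(\nu)\,\|\,\chi^2_k(\bar\nu))$ directly via the Poisson-mixture representation $\chi^2_k(\nu)\stackrel{d}{=}\chi^2_{k+2J}$ with $J\sim\mathrm{Poisson}(\nu/2)$ and then apply Pinsker.

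Once $\mathrm{TV}\le 0.05$ is granted, $P_{\bar\mu}(\tau\le i_0)\ge 0.1-0.05=0.05$. The monotonicity $B_\tau^2(\bar\mu)\ge B_{i_0}^2(\bar\mu)$ on $\{\tau\le i_0\}$ combined with Wald's identity \eqref{EqWald} then gives
\[
{\cal R}(\bar\mu,\tau)^2\ge\E_{\bar\mu}[B_\tau^2(\bar\mu)\ind{\tau\le i_0}]\ge B_{i_0}^2(\bar\mu)\,P_{\bar\mu}(\tau\le i_0)\ge 0.05\,B_{i_0}^2(\bar\mu),
\]
as claimed. For the final assertion, the balanced-oracle property gives ${\cal R}(\mu,m_{\mathfrak s})^2\le 2V_{m_{\mathfrak s}}$, hence ${\cal R}(\mu,\tau)^2\le 2C_\mu V_{m_{\mathfrak s}}$; since $i\mapsto\lambda_i^{-2}$ is non-decreasing, the elementary inequality $V_j/V_m\ge j/m$ for $j\ge m$ yields, for any $i_0\ge 400C_\mu m_{\mathfrak s}$,
\[
V_{i_0+1}\ge\frac{i_0+1}{m_{\mathfrak s}}V_{m_{\mathfrak s}}\ge 400C_\mu V_{m_{\mathfrak s}}\ge 200\,{\cal R}(\mu,\tau)^2,
\]
verifying the standing hypothesis.
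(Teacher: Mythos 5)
Your overall architecture matches the paper's proof exactly: apply Lemma~\ref{lem:lowerstop} with $m=i_0+1$ to get $P_\mu(\tau\le i_0)\ge 0.1$, transfer to $P_{\bar\mu}$ via the total variation distance on $\mathcal G_{i_0}$, observe that conditions (a) and independence of the first block from the residual reduce the TV to that between the laws of the scaled tail residual (equivalently, the non-central $\chi^2$ laws $\PP_K^\theta$ and $\PP_K^{\bar\theta}$ with $K=D-i_0$), and conclude via monotonicity of $B^2_t(\bar\mu)$. The final verification of the standing hypothesis from ${\cal R}(\mu,\tau)^2\le 2C_\mu V_{m_{\mathfrak s}}$ and $V_{i_0+1}/V_{m_{\mathfrak s}}\ge (i_0+1)/m_{\mathfrak s}$ is also correct.

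The genuine gap is the central technical step: the dimension-free bound $\mathrm{TV}(\chi^2_k(\nu),\chi^2_k(\bar\nu))\le 0.05$ under $|\nu-\bar\nu|\le 0.025\sqrt k$ and $\sqrt\nu+\sqrt{\bar\nu}\ge 5.25$. You correctly identify this as ``the main technical obstacle'' but neither of your two proposed routes is a proof. The Poisson-mixture\,$+$\,Pinsker route is actually doomed: by data processing, $\mathrm{KL}(\chi^2_k(\nu)\,\|\,\chi^2_k(\bar\nu))\le \mathrm{KL}(\mathrm{Poisson}(\nu/2)\,\|\,\mathrm{Poisson}(\bar\nu/2))\approx (\nu-\bar\nu)^2/(4\nu)$, which is dimension-free and hence cannot see the $1/\sqrt k$ gain coming from the spreading of the $\chi^2_k$ density; with $|\nu-\bar\nu|\sim\sqrt k$ and $\nu$ of constant order (allowed by (c)), this KL is of order $k$ and Pinsker gives nothing. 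The variance-stabilising heuristic points in the right direction but is not a proof: the Gaussian approximation to $\sqrt X$ carries an error that must itself be bounded uniformly in $k$ and $\nu$, and your description also misattributes the role of condition (c), which in the paper's argument merely serves to absorb the lower-order term $|\norm\theta-\norm{\bar\theta}|$ into $|\norm\theta^2-\norm{\bar\theta}^2|$, not to validate a Gaussian approximation. The paper supplies a dedicated Lemma~\ref{LemTVGamma}: after orthogonal reduction, it conditions on the effective radial Gaussian coordinate $Z_1'$, reduces to the $L^1$ distance of a shifted central $\chi^2(K-1)$ density, bounds the integral of the positive part of the density difference exactly (picking up a $\Gamma(p/2)^{-1}\sim\sqrt{p}$ factor via Stirling), and integrates over $Z_1'$. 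Your proof cannot be considered complete without establishing such a lemma.
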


\begin{proof}
First, we lower bound the risk of $\bar\mu$ by its bias on $\{\tau\le i_0\}$ and then transfer to the law of $\tau$ under $P_\mu$, using the total variation distance on ${\cal G}_{i_0}$:
\begin{align*}
{\cal R}(\bar\mu,\tau)^2 &\ge \E_{\bar\mu}[B_\tau^2(\bar\mu){\bf 1}(\tau\le i_0)]\\
&\ge B_{i_0}^2(\bar\mu)P_{\bar\mu}(\tau\le i_0)\\
&\ge B_{i_0}^2(\bar\mu)\big(P_{\mu}(\tau\le i_0)-\norm{P_\mu-P_{\bar\mu}}_{TV({\cal G}_{i_0})}\big).
\end{align*}
By Lemma \ref{lem:lowerstop} we infer $P_{\mu}(\tau\le i_0)\ge 0.1$.
Denote $W_{i_0} = (Y_1,\ldots,Y_{i_0})$. Since the law of $W_{i_0}$
is identical under $P_\mu$ and $P_{\bar\mu}$, and $W_{i_0}$ is independent
of $R^2_{i_0}$ for both measures, the total variation distance between $P_\mu$ and $P_{\bar\mu}$ on ${\cal G}_{i_0}$ equals the total variation distance between the respective laws of the scaled residual $\delta^{-2}R_{i_0}^2$. For $\vartheta \in \R^D$, let $\PP_K^\theta$ be the non-central $\chi^2$-law of $X_\theta=\sum_{k=1}^K(\theta_k+Z_k)^2$ with $Z_k$ independent and standard Gaussian. With $K=D-i_0$, $\theta_k=\delta^{-1}\lambda_{i_0+k}\mu_{i_0+k}$, $\bar\theta_k=\delta^{-1}\lambda_{i_0+k}\bar\mu_{i_0+k}$, the total variation distance between the respective laws of the scaled residual $\delta^{-2}R_{i_0}^2$ exactly equals $\|\PP_K^\theta-\PP_K^{\bar\theta}\|_{TV}$. By Lemma \ref{LemTVGamma} in the Appendix, taking account of $\norm{\theta}=\delta^{-1}B_{i_0,\lambda}(\mu)$ and similarly for $\norm{\bar\theta}$, we infer from (c) the simplified bound
\[ \norm{P_\mu-P_{\bar\mu}}_{TV({\cal G}_{i_0})}\le \frac{2\abs{B_{i_0,\lambda}^2(\bar\mu)-B_{i_0,\lambda}^2(\mu)}}{\delta^2\sqrt{D-i_0}}.\]
Under our assumption on $\bar\mu$, this is at most 0.05, and the inequality follows. From ${\cal R}(\mu,\tau)^2\le 2C_\mu V_{m_{\mathfrak s}}$ and $V_{i_0+1}/V_{m_{\mathfrak s}}\ge (i_0+1)/m_{\mathfrak s}$, the last statement follows.
\end{proof}

In comparison with the frequency filtration, the main new hypothesis is that at $i_0$ the weak bias of $\bar\mu$ is sufficiently close to that of $\mu$, while the lower bound is still expressed in terms of the strong bias. This is natural since the bias only appears in weak form in the residuals, while the risk involves the strong bias. Condition (c) is just assumed to simplify the bound. To obtain valuable counterexamples, $\bar\mu$ is usually chosen at maximal weak bias distance of $\mu$ allowed by (b), so that (c) is always satisfied in the interesting cases where $\sqrt{D-i_0}$ is not small.

Considering the behaviour over Sobolev-type ellipsoids, we obtain in Appendix \ref{SecCor25} a lower bound result comparable to Corollary \ref{cor:case1lower} for the frequency filtration.

\begin{corollary} \label{cor: residual LB1}
    Assume \eqref{eq:polydecay} and let $\tau$ be any $\mathcal G$-stopping time.
  If there exists $\mu\in H^\beta(R, D)$ with ${\cal R}(\mu,\tau)\le C_\mu
  {\cal R}^*_{\beta,p,R}(\delta)$, then for any  $\alpha \in [0,\beta]$\, and  $\bar R\ge 2R$,
there exists $\bar\mu\in H^\alpha(\bar R, D)$ such that
\[  {\cal R}(\bar\mu,\tau)\ge c_1 \bar R\min\Big(\big(\bar R^{-1} \delta D^{1/4}\big)^{2\alpha/(2\alpha+2p)}
,\,(R^{-1}\delta)^{2\alpha/(2\beta+2p+1)}\Big),\]
provided $R^{-1}\delta\le c_2$ and
$D\ge c_3 \tmm_{\alpha-\frac{1}{4},p,\bar R}(\delta)$\,.
The constants  $c_1,c_3>0$, and $c_2\in(0,1]$ depend only on
$C_\mu,C_A,\alpha,p$.
\end{corollary}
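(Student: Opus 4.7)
The plan is to apply Proposition \ref{prop:case2lower} after choosing a truncation index $i_0$ at the minimax scale $\tmmpar$ and constructing $\bar\mu$ as a single-frequency perturbation of $\mu$ at some $k>i_0$. The two terms in the announced minimum correspond to two regimes in the admissible choice of $k$: a weak-bias-limited regime, producing the rate involving $D^{1/4}$, and a Sobolev-limited (``saturated'') regime, producing the minimax-like second term.

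First, I would set $i_0:=\lceil c_4\,\tmmpar\rceil$ with $c_4=c_4(C_\mu,C_A,p,\beta)$ large enough that polynomial spectral decay yields $V_{i_0+1}\gtrsim\delta^2 i_0^{2p+1}\gtrsim\delta^2\tmmpar^{2p+1}\asymp\mathcal{R}^*_{\beta,p,R}(\delta)^2$, hence $V_{i_0+1}\ge 200\,\mathcal{R}(\mu,\tau)^2$, which is the initial requirement of Proposition \ref{prop:case2lower}. (Equivalently, the final statement of that proposition applies together with $m_{\mathfrak s}(\mu)\lesssim\tmmpar$ for $\mu\in H^\beta(R,D)$.) I then set $\bar\mu_i:=\mu_i$ for $i\le i_0$, $\bar\mu_k:=a$ for a single $k>i_0$ to be chosen, and $\bar\mu_i:=0$ otherwise. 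Since $\alpha\le\beta$ and $\bar R\ge 2R$, one has $\sum_{i\le i_0}i^{2\alpha}\mu_i^2\le R^2\le\bar R^2/4$, so membership $\bar\mu\in H^\alpha(\bar R,D)$ reduces to the single constraint $k^{2\alpha}a^2\le\tfrac{3}{4}\bar R^2$, while Proposition \ref{prop:case2lower}(b) becomes $|\lambda_k^2 a^2 - B_{i_0,\lambda}^2(\mu)|\le 0.025\sqrt{D-i_0}\,\delta^2$.

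Next, I would balance the Sobolev bound ($a^2\lesssim\bar R^2 k^{-2\alpha}$) against the weak-bias bound ($a^2\lesssim k^{2p}(B_{i_0,\lambda}^2(\mu)+\sqrt{D}\,\delta^2)$, using $\lambda_k^{-2}\asymp k^{2p}$). Let $k^\star$ denote the balance point; a direct computation gives $a^2\gtrsim\bar R^2(\bar R^{-1}\delta D^{1/4})^{4\alpha/(2\alpha+2p)}$ at $k=k^\star$ (using only $B_{i_0,\lambda}^2(\mu)+\sqrt{D}\,\delta^2\ge\sqrt{D}\,\delta^2$), matching the square of the first term of the minimum. The requirement $k^\star\le D$ rewrites exactly as $D\ge c_3\,\tmm_{\alpha-1/4,p,\bar R}(\delta)$, the hypothesis. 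In the \emph{unsaturated regime} $k^\star>i_0$, I take $k=k^\star$ and obtain the first term. In the \emph{saturated regime} $k^\star\le i_0$, I take $k=i_0+1\asymp\tmmpar$ and saturate Sobolev, yielding $a^2\gtrsim\bar R^2\tmmpar^{-2\alpha}=\bar R^2(R^{-1}\delta)^{4\alpha/(2\beta+2p+1)}$, the square of the second term; condition (b) is then slack because $k^\star\le i_0$ itself forces $\lambda_k^2 a^2\lesssim\sqrt{D}\,\delta^2$. In all cases, Proposition \ref{prop:case2lower} concludes $\mathcal{R}(\bar\mu,\tau)^2\ge 0.05\,B_{i_0}^2(\bar\mu)=0.05\,a^2$, and the statement follows after a square root, the uniform minimum retaining the smaller of the two regime-specific rates.

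The step I expect to be most delicate is the verification of condition (c) of Proposition \ref{prop:case2lower}, $B_{i_0,\lambda}(\mu)+\lambda_k|a|\ge 5.25\,\delta$. In the unsaturated regime this is automatic since $\lambda_k^2 a^2\asymp\sqrt{D}\,\delta^2$ and $D\to\infty$ under the hypothesis. In the saturated regime, a short calculation using $\bar R\ge R$ gives $\lambda_k|a|/\delta\asymp(\bar R/R)(R/\delta)^{(2\beta-2\alpha+1)/(2\beta+2p+1)}$, a strictly positive power of $R/\delta$; imposing the smallness hypothesis $R^{-1}\delta\le c_2$ with $c_2$ small enough (depending on $C_\mu,C_A,\alpha,p$) forces (c). A secondary check, $\sqrt{D-i_0}\asymp\sqrt{D}$, follows from $i_0=O(\tmmpar)=o(D)$ under the standing hypotheses.
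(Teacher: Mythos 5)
Your overall strategy — apply Proposition~\ref{prop:case2lower} with $i_0\asymp\tmmpar$ and a single-frequency perturbation — is close in spirit to the paper's proof, but your specific construction contains a genuine gap. The paper \emph{adds} mass to $\mu_{i_0+1}$ (keeping all other coordinates), so that $B_{i_0,\lambda}^2(\bar\mu)-B_{i_0,\lambda}^2(\mu)=\lambda_{i_0+1}^2\cdot\tfrac14\bar R^2(i_0+1)^{-2\alpha}$, turning condition~(b) into a clean one-sided constraint on the perturbation alone, and varies $i_0$ over the two regimes. You instead zero out the tail of $\mu$ past $i_0$ and place a single coefficient at $k>i_0$. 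Then condition~(b) reads $|\lambda_k^2a^2-B_{i_0,\lambda}^2(\mu)|\le 0.025\sqrt{D-i_0}\,\delta^2$, a \emph{two-sided} constraint that essentially pins $\lambda_k^2 a^2\approx B_{i_0,\lambda}^2(\mu)$ whenever $B_{i_0,\lambda}^2(\mu)$ exceeds $\sqrt D\,\delta^2$. You never control this lower-bound side, and it can fail: with $i_0\asymp\tmmpar$ one only gets $B_{i_0,\lambda}^2(\mu)\lesssim\sqrt D\,\delta^2$ when $\beta\ge 2\alpha+p$, so for, say, $\alpha=\beta$, $p>0$ it is entirely possible that $B_{i_0,\lambda}^2(\mu)\gg\sqrt D\,\delta^2$ and the feasible set for $a^2$ at your balance point $k^\star$ is empty (the Sobolev ceiling lies \emph{below} the floor forced by~(b)).

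The specific claim in your saturated regime that ``condition~(b) is then slack because $k^\star\le i_0$ itself forces $\lambda_k^2 a^2\lesssim\sqrt D\,\delta^2$'' is incorrect on two counts: $k^\star\le i_0$ only yields $\lambda_{i_0+1}^2 a^2\lesssim B_{i_0,\lambda}^2(\mu)+\sqrt D\,\delta^2$, not $\lesssim\sqrt D\,\delta^2$; and even if it did, condition~(b) bounds the \emph{difference} $|\lambda_k^2a^2-B_{i_0,\lambda}^2(\mu)|$, not $\lambda_k^2a^2$ itself, so $B_{i_0,\lambda}^2(\mu)\gg\sqrt D\,\delta^2$ combined with $\lambda_k^2a^2\lesssim\sqrt D\,\delta^2$ would make~(b) fail. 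The fix is easy and would bring you in line with the paper: perturb additively, i.e. set $\bar\mu_k^2=\mu_k^2+a^2$ and keep the remaining tail of $\mu$, so that $B_{i_0,\lambda}^2(\bar\mu)-B_{i_0,\lambda}^2(\mu)=\lambda_k^2a^2$ is one-sided, while $B_{i_0}^2(\bar\mu)\ge a^2$ and Sobolev membership still reduces to $k^{2\alpha}a^2\le\tfrac34\bar R^2$. With that change, your $i_0$-fixed/$k$-varying parametrization is a valid alternative to the paper's $i_0$-varying/perturbation-at-$i_0+1$ parametrization, and the two regimes you identify correctly reproduce the two terms in the minimum.
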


The form of the lower bound is transparent: as in the case of the frequency filtration, the
sub-obtimal rate $\bar R(R^{-1}\delta)^{2\alpha/(2\beta+2p+1)}$ is  the
one attained by a deterministic rule that stops at the oracle frequency for $H^\beta(R, D)$, whereas $\bar R\big(\bar R^{-1} \delta D^{1/4}\big)^{2\alpha/(2\alpha+2p)}$ is  the size of a signal that may be hidden in the noise of the residual,  {\em i.e.},
is not detected with positive probability by any test,
thus also leading to erroneous early stopping. Note that for the direct problem ($p=0$), the latter quantity is just $\delta D^{1/4}$, which is exactly the critical signal strength in nonparametric testing, see Ingster and Suslina \cite{IS}, while for $p>0$, it reflects the interplay between the weak bias part in the residual and the strong bias part in the risk within the Sobolev ellipsoid.


Corollary \ref{cor: residual LB1} implies in turn explicit constraints for the maximal Sobolev regularity to which a $\mathcal G$-stopping rule can possibly adapt. Here, we argue asymptotically and let explicitly $D=D_\delta$ tend to infinity as the noise level $\delta$ tends to zero.
In this setting, a stopping rule $\tau$ is to be understood as a family of
stopping rules that depend on the knowledge of $D$ and $\delta$.

\begin{corollary} \label{CorSobLB}
  Assume \eqref{eq:polydecay}. Let $\beta_+>\beta_-\ge 0$, $R_+,R_->0$.
  Suppose that there exists a ${\cal G}$-stopping rule $\tau$ such that ${\cal R}(\mu,\tau)\le C \mathcal{R}^*_{\beta,p,R}(\delta)$
  holds for some $C>0$, all $\delta>0$ small enough, and for every  $\mu\in H^{\beta}(R, D_\delta)$,
  simultaneously for $(\beta,R)\in\{(\beta_-,R_-),(\beta_+,R_+)\}$. Then the rate-optimal truncation  time for $H^{\beta_-}(R_-, D_\delta)$ must satisfy
  $\sqrt{D_\delta}={\mathcal O}(\tmm_{\beta_-,p,R_-}(\delta))$ as $\delta\to 0$ (all other parameters being fixed).

In particular, if a ${\cal G}$-stopping rule $\tau$ is rate-optimal over $H^\beta(R, D_\delta)$ for $\beta\in[\beta_{min},\beta_{max}]$, $\beta_{max}>\beta_{min}\ge 0$, and some $R>0$, then we necessarily must have $\beta_{max}\le \liminf_{\delta \to 0}\frac{\log \delta^{-2}}{\log D_\delta}-p-1/2$.
\end{corollary}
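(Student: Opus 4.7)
The plan is to derive both statements from Corollary~\ref{cor: residual LB1} applied at the pair $(\beta, R) = (\beta_+, R_+)$ with target regularity $\alpha = \beta_-$, and then use the $(\beta_-, R_-)$-adaptation hypothesis to contradict whichever branch of the minimum in the lower bound would otherwise be ``too large''. The rest is elementary algebra.

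First, invoking the $(\beta_+, R_+)$-adaptation on any $\mu \in H^{\beta_+}(R_+, D_\delta)$ (for instance $\mu=0$), the hypothesis of Corollary~\ref{cor: residual LB1} is satisfied with constant $C_\mu = C$. Choosing $\bar R$ of order $R_-$ (which is compatible with $\bar R \ge 2R_+$ up to a fixed multiplicative constant absorbed into the $\mathcal O$ of the conclusion, using that all parameters other than $\delta$ are fixed), the corollary produces $\bar\mu \in H^{\beta_-}(\bar R, D_\delta)$ with
\[
\mathcal{R}(\bar\mu, \tau) \;\ge\; c_1 \bar R \min\!\Bigl(\bigl(\bar R^{-1}\delta D_\delta^{1/4}\bigr)^{2\beta_-/(2\beta_- + 2p)},\;(R_+^{-1}\delta)^{2\beta_-/(2\beta_+ + 2p + 1)}\Bigr).
\]
Simultaneously, the $(\beta_-, R_-)$-adaptation hypothesis gives the upper bound $\mathcal{R}(\bar\mu, \tau) \le C' \mathcal{R}^*_{\beta_-, p, R_-}(\delta) \asymp R_-(R_-^{-1}\delta)^{2\beta_-/(2\beta_- + 2p + 1)}$.

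The next step is to eliminate the second branch of the minimum. Since $\beta_+ > \beta_-$, the $\delta$-exponent $2\beta_-/(2\beta_+ + 2p + 1)$ is strictly smaller than $2\beta_-/(2\beta_- + 2p + 1)$, so the second term exceeds the upper bound for all $\delta$ small enough, a contradiction. Hence the first branch is the active one, which after cancelling the common $R_-$-prefactor and taking logarithms yields
\[
\tfrac{2\beta_-}{2\beta_- + 2p}\Bigl(\log \delta + \tfrac{1}{4}\log D_\delta\Bigr) \;\le\; \tfrac{2\beta_-}{2\beta_- + 2p + 1}\log \delta + \mathcal{O}(1).
\]
Solving for $\log D_\delta$ gives the polynomial bound $\log D_\delta \le \tfrac{4}{2\beta_- + 2p + 1} \log \delta^{-1} + \mathcal{O}(1)$, which is precisely $\sqrt{D_\delta} = \mathcal{O}(t_{\beta_-, p, R_-}(\delta))$.

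For the final consequence I would set $\beta_+ = \beta_{\max}$ and let $\beta_- \in [\beta_{\min}, \beta_{\max})$ vary. The first part of the corollary then reads $\beta_- + p + 1/2 \le \log \delta^{-2}/\log D_\delta + o(1)$; passing to the $\liminf$ as $\delta \to 0$ and then letting $\beta_- \uparrow \beta_{\max}$ yields the stated inequality. The only real technical obstacle is the constant bookkeeping in the case $R_- < 2R_+$ (where $\bar R$ must be enlarged to $2R_+$, changing the radius of the Sobolev ball of $\bar\mu$); since the claim is purely a rate statement in $\delta$ with all other parameters fixed, this discrepancy is absorbed in the $\mathcal O$. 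The conceptual content is that the first branch of the minimum in Corollary~\ref{cor: residual LB1} encodes the weak-bias ``detection floor'' of order $\delta D_\delta^{1/4}$ (the minimax detection rate in the residual), and forcing this floor not to exceed the adaptive strong-norm rate on $H^{\beta_-}(R_-, D_\delta)$ is precisely what constrains $\sqrt{D_\delta}$ to grow polynomially in $\delta^{-1}$.
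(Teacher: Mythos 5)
Your strategy matches the paper's: apply Corollary~\ref{cor: residual LB1} with $\beta=\beta_+$, $\alpha=\beta_-$ to produce a signal $\bar\mu$ of regularity $\beta_-$ carrying a large lower bound, then invoke the $(\beta_-,R_-)$-adaptation hypothesis to force the detection-floor branch of the minimum to be the binding one, and unwind the exponents. The branch-elimination argument and the algebra in the first part are correct, and the direct limit passage in the second part is logically equivalent to the paper's contradiction argument.

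There is, however, a real gap in the radius bookkeeping. You apply Corollary~\ref{cor: residual LB1} with $R=R_+$, which forces $\bar R\ge 2R_+$. In the case $R_-<2R_+$ you propose to ``enlarge $\bar R$ to $2R_+$'' and absorb the change into the $\mathcal O$. This does not work: the corollary then only delivers $\bar\mu\in H^{\beta_-}(2R_+,D_\delta)$, which need not lie in $H^{\beta_-}(R_-,D_\delta)$, so the $(\beta_-,R_-)$-adaptation hypothesis provides \emph{no} upper bound on $\mathcal R(\bar\mu,\tau)$ at all. That is a qualitative failure, not a constant discrepancy: rate-optimality over a smaller Sobolev ball does not extend to larger balls. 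The correct fix goes in the opposite direction --- shrink $R$ rather than enlarge $\bar R$. The paper applies the corollary with $R=\min(R_+,R_-/2)$ and $\bar R=R_-$. Then $0\in H^{\beta_+}(R,D_\delta)\subset H^{\beta_+}(R_+,D_\delta)$, and since the ratio $\mathcal R^*_{\beta_+,p,R_+}(\delta)/\mathcal R^*_{\beta_+,p,R}(\delta)$ is a fixed constant (the radii being fixed parameters), the hypothesis of the corollary holds with an adjusted $C_\mu$; and now $\bar\mu\in H^{\beta_-}(R_-,D_\delta)$ is guaranteed, so the $(\beta_-,R_-)$-adaptation hypothesis can legitimately be applied to $\bar\mu$ to obtain the upper bound you need.
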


\begin{proof}
  In this proof we denote by '$\lesssim$', '$\gtrsim$' inequalities holding up to
   factors  depending  on $C_A,p,\beta_+,\beta_-,R_-,R_+$.
We apply Corollary \ref{cor: residual LB1} with $\beta=\beta_+$, $\alpha=\beta_-$ and $\bar R=R_-$, $R=\min(R_+,\bar R/2)$. Because of $\delta^{-2/(2\beta_-+2p+1/2)}\le \delta^{-4/(2\beta_-+2p+1)}=o(D_\delta)$, the conditions are fulfilled for sufficiently small $\delta>0$ and we conclude ($R_+,R_-$ are fixed)
\[ \exists\bar\mu\in H^{\beta_-}(R_-, D):\;  {\cal R}(\bar\mu,\tau)\gtrsim \min\Big(\big( \delta D_\delta^{1/4}\big)^{2\beta_-/(2\beta_-+2p)}
,\,\delta^{2\beta_-/(2\beta_++2p+1)}\Big).\]
By assumption, that rate must be  ${\mathcal O}(\delta^{2\beta_-/(2\beta_-+2p+1)})$. Since the second term
in the above minimum is of larger order than this, this must imply
$( \delta D_\delta^{1/4})^{2\beta_-/(2\beta_-+2p)}\lesssim \delta^{2\beta_-/(2\beta_-+2p+1)}$, and
further $\sqrt{D_\delta} \lesssim \delta^{-2/(2\beta_-+2p+1)} \lesssim \tmm_{\beta_-,p,R_-}(\delta).$
The first statement is proved.

For the second assertion, we proceed by contradiction and assume $\beta_{max} > \beta_{lim}:=\liminf_{\delta \rightarrow 0}
\frac{\log \delta^{-2}}{\log D_\delta} - p - 1/2$. Choose $\beta_+ = \beta_{max}$ and $\beta_- \in (\beta_{lim},\beta_{max})$.
Then $\beta_- > \beta_{lim}$ implies $\tmm_{\beta_-,p,R_-}(\delta_k)=o(\sqrt{D_{\delta_k}})$
for some sequence $\delta_k \rightarrow 0$, contradicting
the first
part of the corollary.
\end{proof}

For statistical inverse problems with singular values satisfying the polynomial decay \eqref{eq:polydecay} we may choose the maximal dimension $D_\delta \thicksim \delta^{-2/(2p+1)}$ without losing in the convergence rate for a Sobolev ellipsoid of any regularity $\beta\ge 0$, see e.g. Cohen {\it el al.} \cite{CHR}. In fact, we then have the variance
\begin{equation}\label{EqDConserv}
V_{D_\delta}=\delta^2\sum_{i=1}^{D_\delta}\lambda_i^{-2}\thicksim\delta^2 D_\delta^{2p+1}\thicksim 1,
\end{equation}
and the estimator with truncation at the order of $D_\delta$ will not be consistent anyway;
the oracle index is always of order $o(D_\delta)$ whatever the signal regularity.
For this choice of $D_\delta$, optimal adaptation is  only possible if the squared minimax rate is within the interval $[\delta,1]$, faster adaptive rates up to $\delta^2$ cannot be attained.

  Usually, $D_\delta$ will be chosen much smaller, assuming some minimal a priori regularity $\beta_{min}$. The choice $D_\delta \thicksim \delta^{-2/(2\beta_{min}+2p+1)}$ ensures that rate optimality is possible for all (sequence space) Sobolev regularities $\beta\ge\beta_{min}$,
when using either oracle (non-adaptive) rules, or adaptive rules that are not
stopping times. In contrast, any ${\cal G}$-stopping rule can at best adapt over the regularity interval $[\beta_{min},\beta_{max}]$ with $\beta_{max}=2\beta_{min}+p+1/2$ (keeping the radius $R$ of the Sobolev ball fixed). These adaptation intervals, however, are fundamentally understood only when inspecting the corresponding rate-optimal truncation indices $\tmmpar$, which must at least be of order $\sqrt{D_\delta}\thicksim \delta^{-1/(2\beta_{min}+2p+1)}$ in order to distinguish a signal in the residual from the pure noise case.

\section{Upper bounds} \label{sec:upper bounds}

Consider the residual-based stopping rule
$\tau = \min\big\{m\ge m_0\,|\,R_m^2\le \kappa\big\}$ from \eqref{def early stopping}.
Since $R_{m}^2$ is decreasing with $R_D^2=0$, the minimum is attained and we  have $R_{\tau}^2\le\kappa$.

In order to have clearer oracle inequalities, we work with continuous oracle-type truncation indices in $[0,D]$. Interpolating such that $V_{t,\lambda}=t\delta^2$ continues to hold for real $t\in[0,D]$, we set
\[ \widehat\mu^{(t)}_i:=\Big({\bf 1}(i\le \floor{t})+\sqrt{t-\floor{t}}{\bf 1}(i=\floor{t}+1)\Big)\lambda_i^{-1}Y_i,\quad i=1,\ldots,D,\]
and define further
\begin{align*}
R_t^2&=\big(1-\sqrt{t-\floor t}\big)^2Y_{\floor{t}+1}^2+\textstyle\sum_{i=\floor{t}+2}^DY_i^2,\\
B_{t}^2(\mu)&=\big(1-\sqrt{t-\floor t}\big)^2\mu_{\floor{t}+1}^2+\textstyle\sum_{i=\floor{t}+2}^D\mu_i^2,\\ V_{t}&=(t-\floor{t})\delta^2\lambda_{\floor{t}+1}^{-2}+\delta^2\textstyle\sum_{i=1}^{\floor{t}}\lambda_i^{-2},\\
 S_t&=(t-\floor{t})\delta^2\lambda_{\floor{t}+1}^{-2}\eps_{\floor{t}+1}^2+\delta^2\textstyle\sum_{i=1}^{\floor{t}}\lambda_i^{-2}\eps_i^2.
\end{align*}
We thus obtain the following decompositions in a bias and a stochastic error term:
\begin{gather}
\norm{\widehat\mu^{(t)}-\mu}^2=B_t^2(\mu)+S_t+2\delta \big(t-\floor{t}-\sqrt{t-\floor{t}}\big)\lambda_{\floor{t}+1}^{-1}\mu_{\floor{t}+1}\eps_{\floor{t}+1},\label{EqBiasStochDecomp}\\
\E\big[\norm{\widehat\mu^{(t)}-\mu}^2\big] =B_t^2(\mu)+V_t,\quad \E\big[\norm{\widehat\mu^{(\tau)}-\mu}^2\big]=\E\big[B_\tau^2(\mu)+S_\tau\big],\label{EqBiasVarDecomp}
\end{gather}
noting that the last term in \eqref{EqBiasStochDecomp} has expectation zero for deterministic $t$ and vanishes for the integer-valued random time $\tau$.
Analogously, the linear interpolations for bias and variance in weak norm are defined.
Thus, the continuously interpolated residual has expectation
\begin{align}
 \E[R_t^2]&=B_{t,\lambda}^2(\mu)+\Big(\big(1-\sqrt{t-\floor t}\big)^2+(D-\floor{t}-1)\Big)\delta^2 \notag \\
 &=B_{t,\lambda}^2(\mu)-V_{t,\lambda}+D\delta^2-2\big(\sqrt{t-\floor t}-(t-\floor t)\big)\delta^2. \label{expresidual}
\end{align}
Integrating the last interpolation error term into the definition, we define the
{\it oracle-proxy index} $t^\ast\in[m_0,D]$ as
\[\textstyle  t^\ast=\inf\Big\{t\ge m_0\,\Big|\,\E_\mu[R_t^2]\le\kappa-2\big(\sqrt{t-\floor t}-(t-\floor t)\big)\delta^2\Big\} .\]
Then by continuity $ \E[R_{t^\ast}^2]=\kappa-2(\sqrt{t_\ast-\floor{t_\ast}}-(t_\ast-\floor{ t_\ast}))\delta^2\in[\kappa-\frac12\delta^2,\kappa]$ holds in the case $t^\ast>m_0$, implying
\begin{equation}\label{EqRtstarkappa}
\kappa=D\delta^2+B_{t^\ast,\lambda}^2(\mu)-V_{t^\ast,\lambda}.
\end{equation}
For $t^\ast=m_0$ we still have $D\delta^2+B_{t^\ast,\lambda}^2(\mu)-V_{t^\ast,\lambda}\le\kappa$.

Let us finally define the {\it weakly and strongly balanced  oracles} $t_{\mathfrak w}$ and $t_{\mathfrak s}$ in a continuous manner:
\begin{align*}
 t_{\mathfrak w} =t_{\mathfrak w}(\mu) &=\inf\{t\ge m_0\,|\, B_{t,\lambda}^2(\mu)\le V_{t,\lambda}\}\in[m_0,D],\\
 t_{\mathfrak s} =t_{\mathfrak s}(\mu) &=\inf\{t\ge m_0\,|\, B_t^2(\mu)\le V_t\}\in[m_0,D].
 \end{align*}
While the balanced oracles are the natural oracle quantities we try to mimic by early stopping, they should be compared to the classical oracles. Since $t\mapsto B_t^2(\mu)$ is decreasing and $t\mapsto V_t$ is increasing, we derive
\begin{equation}\label{EqBalClassOracle}
\inf_{t\in[m_0,D]}\E\big[\norm{\widehat\mu^{(t)}-\mu}^2\big]
\ge \inf_{t\in[m_0,D]} \max(B_t^2(\mu),V_t)
\ge V_{t_{\mathfrak s}}\ge \tfrac12\E\big[\norm{\widehat\mu^{(t_{\mathfrak s})}-\mu}^2\big],
\end{equation}
noting  $ B_{t_{\mathfrak s}}^2(\mu)\le V_{t_{\mathfrak s}}$.

\subsection{Upper bounds in weak norm} \label{upper in weak norm}

{ The following is an analogue of Proposition 2.1 in \cite{BHR2018}, but includes a discretisation error for the discrete time stopping rule $\tau$.}

\begin{proposition}\label{PropMainBound}
The  balanced oracle inequality  in weak norm
\begin{equation} \label{eq:propmainbound}
\E\big[\norm{\widehat\mu^{(\tau)}-\widehat\mu^{(t^\ast)}}_{\lambda}^2\big] \le \sqrt{2D}\delta^2+2\delta B_{t^\ast,\lambda}(\mu)+\Delta_\tau(\mu)^2
\end{equation}
holds with the discretisation error
\[ \Delta_\tau(\mu)=\max_{i\ge \floor{t^\ast}+1} \abs{\lambda_i\mu_i}+4\delta\Big(\big(\log(\sqrt 2D)\big)^{1/2}+1\Big).\]
\end{proposition}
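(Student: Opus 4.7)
The overall strategy is to relate $\|\widehat\mu^{(\tau)} - \widehat\mu^{(t^\ast)}\|_\lambda^2$ to the residual gap $|R_\tau^2 - R_{t^\ast}^2|$ and then sandwich both $R_\tau^2$ and $\E[R_{t^\ast}^2]$ around the threshold $\kappa$. A direct computation with the continuous interpolations, letting $u=t^\ast-\floor{t^\ast}$, shows that
\[
  \|\widehat\mu^{(\tau)} - \widehat\mu^{(t^\ast)}\|_\lambda^2 \;\le\; |R_\tau^2 - R_{t^\ast}^2|
\]
in both the cases $\tau \ge \floor{t^\ast}+1$ (where equality holds and $R_{t^\ast}^2\ge R_\tau^2$) and $\tau \le \floor{t^\ast}$ (where the two sides differ by the nonnegative surplus $2(\sqrt u - u)Y_{\floor{t^\ast}+1}^2$, going in our favour).

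Next I would insert $\kappa$ and $\E[R_{t^\ast}^2]$ between $R_\tau^2$ and $R_{t^\ast}^2$. From \eqref{EqRtstarkappa} one has $0 \le \kappa - \E[R_{t^\ast}^2] \le \tfrac12 \delta^2$ whenever $t^\ast>m_0$ (the boundary case $t^\ast=m_0$ being essentially trivial, since Case $\tau\le\floor{t^\ast}$ then forces $\tau=m_0$ with $R_\tau^2=R_{t^\ast}^2$, while Case $\tau\ge\floor{t^\ast}+1$ already gives $\E[R_{t^\ast}^2]\le\kappa$ directly). On $\{\tau \ge \floor{t^\ast}+1\}$ one necessarily has $\tau>m_0$, since otherwise $\floor{t^\ast}+1 \le m_0$ would force $t^\ast<m_0$; the stopping rule then gives $R_{\tau-1}^2>\kappa \ge R_\tau^2$, whence $\kappa - R_\tau^2 \le R_{\tau-1}^2 - R_\tau^2 = Y_\tau^2$. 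On the complementary event we use only $R_\tau^2 \le \kappa$ together with $R_\tau^2 \ge R_{t^\ast}^2$. Combining the two cases yields the master inequality
\[
  \|\widehat\mu^{(\tau)} - \widehat\mu^{(t^\ast)}\|_\lambda^2 \;\le\; \babs{R_{t^\ast}^2 - \E[R_{t^\ast}^2]} \,+\, \tfrac12 \delta^2 \,+\, Y_\tau^2\,{\bf 1}(\tau \ge \floor{t^\ast}+1).
\]

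Taking expectations and using Jensen together with $\sqrt{a+b}\le\sqrt a+\sqrt b$, the first term is controlled via $\Var(R_{t^\ast}^2) \le 2D\delta^4 + 4\delta^2 B_{t^\ast,\lambda}^2(\mu)$. This uses $\Var(Y_i^2) = 2\delta^4 + 4\delta^2 \lambda_i^2\mu_i^2$ and the fact that the interpolation weights $c_i\in[0,1]$ in $R_{t^\ast}^2=\sum_i c_i Y_i^2$ satisfy $c_i^2\le c_i$, so that the weighted $\lambda_i^2\mu_i^2$ sum is dominated by $B_{t^\ast,\lambda}^2(\mu)$; this produces the contribution $\sqrt{2D}\delta^2 + 2\delta B_{t^\ast,\lambda}(\mu)$.

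The overshoot is handled via $Y_\tau^2 \le \max_{i\ge \floor{t^\ast}+1} Y_i^2$ on the relevant event, combined with $|Y_i|\le |\lambda_i\mu_i|+\delta|\eps_i|$, the inequality $\E[(A+B)^2]\le (A+\sqrt{\E B^2})^2$ for a constant $A\ge 0$ and a nonnegative $B$, and the sub-Gaussian maximum bound $\E[(\max_{1\le i\le D}|\eps_i|)^2] \le 2\log(2D)+2$ obtained by tail integration. A short constant chase then yields $\sqrt{\E[(\max|\eps_i|)^2]}\le 4\sqrt{\log(\sqrt 2 D)}+4$, so that $\E[Y_\tau^2\,{\bf 1}(\tau\ge \floor{t^\ast}+1)]\le \Delta_\tau(\mu)^2$; the additive $\tfrac12\delta^2$ left over from the master inequality is absorbed since $\Delta_\tau(\mu)^2\ge 16\delta^2$. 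I expect the main nuisance in writing this up carefully to be the interpolation bookkeeping at the index $\floor{t^\ast}+1$ (both for the first inequality and for the variance bound), together with constant tracking in the Gaussian maximum bound; no conceptual obstacle is anticipated.
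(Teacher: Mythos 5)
Your proposal is correct and follows essentially the same route as the paper's proof: you reduce the weak-norm discrepancy to the residual gap $|R_\tau^2-R_{t^\ast}^2|$, sandwich it around $\kappa$ and $\E[R_{t^\ast}^2]$ using the stopping rule and \eqref{EqRtstarkappa}, control the fluctuation $|R_{t^\ast}^2-\E[R_{t^\ast}^2]|$ by Jensen and $\Var(R_{t^\ast}^2)\le 2D\delta^4+4\delta^2 B_{t^\ast,\lambda}^2(\mu)$, and absorb the overshoot $Y_\tau^2$ into a Gaussian maximum. The only cosmetic deviation is that you derive the maximum bound $\E[\max_i Y_i^2]\le \Delta_\tau(\mu)^2-\tfrac12\delta^2$ from scratch via tail integration, whereas the paper cites Corollary 1.3 of \cite{Ts}; both give the same expression.
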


\begin{proof}
The main  argument is completely deterministic. For $\tau>t^\ast\ge m_0$ we obtain by $R_\tau^2+Y_\tau^2=R_{\tau-1}^2>\kappa\ge \E[R_{t^\ast}^2]$:
\begin{align*}
\norm{\widehat\mu^{(t^\ast)}-\widehat\mu^{(\tau)}}_{\lambda}^2&=(1-\sqrt{t^\ast-\floor{t^\ast}})^2Y_{\floor{t^\ast}+1}^2+\textstyle\sum_{i=\floor{t^\ast}+2}^\tau Y_i^2\\
&=R_{t^\ast}^2-R_\tau^2< R_{t^\ast}^2-\E[R_{t^\ast}^2]+Y_\tau^2.
\end{align*}
For $t^\ast>\tau\ge m_0$ we use $t^\ast-\floor{t^\ast}\le 1-(1-\sqrt{t^\ast-\floor{t^\ast}})^2$ and $R_{\tau}^2\le \kappa\le \E[R_{t^\ast}^2]+\frac12\delta^2$:
\begin{align*}
 \norm{\widehat\mu^{(t^\ast)}-\widehat\mu^{(\tau)}}_{\lambda}^2&=(t^\ast-\floor{t^\ast})Y_{\floor{t^\ast}+1}^2+\textstyle\sum_{i=\tau+1}^{\floor{t^\ast}}Y_i^2\\
&\le R_\tau^2-R_{t^\ast}^2\le \E[R_{t^\ast}^2]-R_{t^\ast}^2+\tfrac12\delta^2.
\end{align*}
Consequently, we find
\begin{align*}
&  \E\big[\norm{\widehat\mu^{(t^\ast)}-\widehat\mu^{(\tau)}}_{\lambda}^2\big]\\
&\le \E\Big[\babs{\sum_{i=\floor{t^\ast}+1}^D\gamma_i\big(\delta^2(\eps_i^2-1)+2\lambda_i\mu_i\delta\eps_i\big)}\Big]+\E\Big[\max_{i\ge\floor{t^\ast}+1}Y_i^2\Big]+\tfrac12\delta^2,
\end{align*}
with $\gamma_i=1$ for $i> \floor{t^\ast}+1$ and $\gamma_i=(1-\sqrt{t^\ast-\floor{t^\ast}}\big)^2$ for $i=\floor{t^\ast}+1$.
The maximal inequality in Corollary 1.3 of \cite{Ts} implies
\begin{align*}
\E\Big[\max_{i\ge\floor{t^\ast}+1}Y_i^2\Big]
&\le \Big(\max_{i\ge \floor{t^\ast}+1}\abs{\lambda_i\mu_i}+4\delta\big(\log\big(\sqrt 2(D-\floor{t^\ast})\big)\big)^{1/2}\Big)^2,
\end{align*}
which is smaller than $\Delta_\tau(\mu)^2-\tfrac12\delta^2$.
By bounding 
the main term via 
 Jensen's 
inequality, 
using $\Var(\eps_i^2)=2$, $\Cov(\eps_i^2,\eps_i)=0$,
this gives
\begin{align*}
\E_\mu\big[\norm{\widehat\mu^{(t^\ast)}-\widehat\mu^{(\tau)}}_{\lambda}^2\big]
&\le \Big(2(D-t^\ast)\delta^4+4\delta^2 B_{t^\ast,\lambda}^2(\mu)\Big)^{1/2}+\Delta_\tau(\mu)^2,
\end{align*}
and thus by $\sqrt{A+B}\le \sqrt A+\sqrt B$, $A,B\ge 0$, the asserted inequality.
\end{proof}

Remark that the proof only relies on the moments of $(\eps_i)$ up to fourth order and a maximal deviation inequality,   so that an extension to sub-Gaussian distributions is straightforward. More heavy-tailed distributions can be treated at the cost of a looser bound on $\Delta_\tau(\mu)$.

So far, the choice of $\kappa$ has not been addressed. The identity \eqref{EqRtstarkappa} shows that the choice $\kappa=D\delta^2$ balances weak squared bias and variance exactly such that $t^\ast=t_{\mathfrak w}$. In practice, however, we might have to estimate the noise level $\delta^2$, or we prefer a larger threshold $\kappa$ to reduce numerical complexity. Therefore,  precise bounds for general $\kappa$ between the oracle-proxy and the weakly balanced errors in weak norm are useful.

\begin{lemma}\label{LemTransferWtstarts}
We have
\[ (B_{t^\ast,\lambda}^2(\mu)-B_{t_{\mathfrak w},\lambda}^2(\mu))_+\le (\kappa-D\delta^2)_+,\quad (V_{t^\ast,\lambda}-V_{t_{\mathfrak w},\lambda})_+\le (D\delta^2-\kappa)_+,
\]
so that
\[ \E[\norm{\widehat\mu^{(t^\ast)}-\mu}_{\lambda}^2] \le \E[\norm{\widehat\mu^{(t_{\mathfrak w})}-\mu}_{\lambda}^2]+\abs{\kappa-D\delta^2}.\]
\end{lemma}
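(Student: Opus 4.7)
The plan rests on two preliminary facts. First, by identity \eqref{EqRtstarkappa} and the remark immediately following it,
\[
B_{t^\ast,\lambda}^2(\mu)-V_{t^\ast,\lambda}\le \kappa-D\delta^2,
\]
with equality when $t^\ast>m_0$. Second, since $t\mapsto B_{t,\lambda}^2(\mu)$ is continuous and non-increasing while $t\mapsto V_{t,\lambda}=t\delta^2$ is continuous and strictly increasing, the weakly balanced oracle satisfies $B_{t_{\mathfrak w},\lambda}^2(\mu)=V_{t_{\mathfrak w},\lambda}$ as soon as $t_{\mathfrak w}>m_0$, and $B_{m_0,\lambda}^2(\mu)\le V_{m_0,\lambda}$ when $t_{\mathfrak w}=m_0$.

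The core step is a case split on the sign of $\kappa-D\delta^2$. In the case $\kappa\ge D\delta^2$, the first fact forces $B_{t^\ast,\lambda}^2\ge V_{t^\ast,\lambda}$ whenever $t^\ast>m_0$, hence $t^\ast\le t_{\mathfrak w}$ by monotonicity (trivial if $t^\ast=m_0$). The variance excess then vanishes and matches $(D\delta^2-\kappa)_+=0$. For the bias, using $B_{t_{\mathfrak w},\lambda}^2=V_{t_{\mathfrak w},\lambda}$ one writes
\[
B_{t^\ast,\lambda}^2-B_{t_{\mathfrak w},\lambda}^2 = \bigl(B_{t^\ast,\lambda}^2-V_{t^\ast,\lambda}\bigr) + \bigl(V_{t^\ast,\lambda}-V_{t_{\mathfrak w},\lambda}\bigr) \le (\kappa-D\delta^2) + 0.
\]
The mirror case $\kappa<D\delta^2$ is handled symmetrically: the first fact gives $B_{t^\ast,\lambda}^2<V_{t^\ast,\lambda}$, hence $t_{\mathfrak w}\le t^\ast$, the bias excess vanishes, and the variance bound follows by the analogous decomposition, again invoking $B_{t_{\mathfrak w},\lambda}^2=V_{t_{\mathfrak w},\lambda}$.

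The risk inequality is then immediate, as $\E[\|\widehat\mu^{(t)}-\mu\|_\lambda^2]=B_{t,\lambda}^2(\mu)+V_{t,\lambda}$ for deterministic $t$; adding the two signed differences and bounding each by its positive part gives $(\kappa-D\delta^2)_+ + (D\delta^2-\kappa)_+ = |\kappa-D\delta^2|$.

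The main obstacle is the careful bookkeeping of the boundary sub-cases $t^\ast=m_0$ and $t_{\mathfrak w}=m_0$, where the equality in the first fact and the continuity identity at $t_{\mathfrak w}$ can fail. Inspection shows these sub-cases collapse: in Case 1 with $t_{\mathfrak w}=m_0$, one has $\E[R_{m_0}^2]=D\delta^2+B_{m_0,\lambda}^2-V_{m_0,\lambda}\le D\delta^2\le\kappa$, forcing $t^\ast=m_0=t_{\mathfrak w}$ so that all differences vanish; Case 2 with $t^\ast=m_0$ is analogous. With these boundary cases dispatched, the decompositions above close cleanly.
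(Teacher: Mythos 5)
Your proof is correct and takes essentially the same approach as the paper. The paper splits directly on the order of $t^\ast$ versus $t_{\mathfrak w}$, whereas you split on the sign of $\kappa - D\delta^2$ and then derive the ordering from it — but as you correctly observe, these dichotomies coincide (via the identity/inequality from \eqref{EqRtstarkappa}), and the algebraic decomposition through $B_{t_{\mathfrak w},\lambda}^2 = V_{t_{\mathfrak w},\lambda}$ is the same in both; your extra bookkeeping at the boundary $m_0$ is the same triviality the paper implicitly excludes by its choice of cases.
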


\begin{proof}
Suppose $t_{\mathfrak w}> t^\ast\ge m_0$. Then $V_{t^\ast,\lambda}<V_{t_{\mathfrak w},\lambda}$ and from $\kappa\ge B_{t^\ast,\lambda}^2(\mu)+D\delta^2-V_{t^\ast,\lambda}$ (see \eqref{EqRtstarkappa} and afterwards), $V_{t_{\mathfrak w},\lambda}=B_{t_{\mathfrak w},\lambda}^2(\mu)$ we deduce
\[ B_{t^\ast,\lambda}^2(\mu)\le V_{t^\ast,\lambda}+\kappa-D\delta^2< V_{t_{\mathfrak w},\lambda}+\kappa-D\delta^2 =  B_{t_{\mathfrak w},\lambda}^2(\mu)+\kappa-D\delta^2.
\]
Conversely, for $t^\ast>t_{\mathfrak w}\ge m_0$ we have $B_{t^\ast,\lambda}^2(\mu)\le B_{t_{\mathfrak w},\lambda}^2(\mu)$ as well as \eqref{EqRtstarkappa} and $V_{t_{\mathfrak w},\lambda}\ge B_{t_{\mathfrak w},\lambda}^2(\mu)$, so that
\[ V_{t^\ast,\lambda}= B_{t^\ast,\lambda}^2(\mu)-\kappa+D\delta^2\le B_{t_{\mathfrak w},\lambda}^2(\mu)-\kappa+D\delta^2 \le  V_{t_{\mathfrak w},\lambda}-\kappa+D\delta^2.
\]
This gives the result.
\end{proof}

Remark that the weak variance control of Lemma \ref{LemTransferWtstarts} implies directly $(t^\ast-t_{\mathfrak w})_+\le (D-\kappa\delta^{-2})_+$. From the inequalities $B_t^2(\mu)\ge \lambda_{\floor{t}}^{-2}B_{t,\lambda}^2(\mu)$ and $V_t\le \lambda_{\floor{t}}^{-2}V_{t,\lambda}$ we infer further $t_{\mathfrak w}\le t_{\mathfrak s}$,  and thus it always holds
\begin{equation}\label{Eqtoracles}
t^\ast-(D-\kappa\delta^{-2})_+\le t_{\mathfrak w}\le t_{\mathfrak s}.
\end{equation}
As a consequence of the preceding two results, we obtain directly a weakly balanced oracle inequality with error terms of order $\sqrt D\delta^2$, provided $\abs{\kappa-D\delta}^2$ is at most of that order:

\begin{theorem}
We have
\begin{align*}
\E\big[\norm{\widehat\mu^{(\tau)}-\mu}_{\lambda}^2\big] &\le C \Big( \E\big[\norm{\widehat\mu^{(t_{\mathfrak w})}-\mu}_{\lambda}^2\big]  +\sqrt D \delta^2+\abs{\kappa-D\delta^2}\Big)\\
&\le C \Big( 2\min_{t\in[m_0,D]}\E\big[\norm{\widehat\mu^{(t)}-\mu}_{\lambda}^2\big] + \sqrt D \delta^2+\abs{\kappa-D\delta^2}\Big)
\end{align*}
 with a numerical constant $C>0$.
\end{theorem}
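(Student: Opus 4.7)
The plan is to split the risk via the triangle inequality,
\[\norm{\widehat\mu^{(\tau)}-\mu}_\lambda^2\le 2\norm{\widehat\mu^{(\tau)}-\widehat\mu^{(t^\ast)}}_\lambda^2+2\norm{\widehat\mu^{(t^\ast)}-\mu}_\lambda^2,\]
take expectations, and handle the two contributions separately. The second one is immediate: Lemma~\ref{LemTransferWtstarts} delivers $\E[\norm{\widehat\mu^{(t^\ast)}-\mu}_\lambda^2]\le\E[\norm{\widehat\mu^{(t_{\mathfrak w})}-\mu}_\lambda^2]+|\kappa-D\delta^2|$, which already matches the desired form.

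For the first contribution, I would invoke Proposition~\ref{PropMainBound}, which yields three terms $\sqrt{2D}\delta^2$, $2\delta B_{t^\ast,\lambda}(\mu)$ and $\Delta_\tau(\mu)^2$ to absorb. The first is already of the desired order $\sqrt D\delta^2$. The cross term is handled by the AM-GM bound $2\delta B_{t^\ast,\lambda}(\mu)\le\delta^2+B_{t^\ast,\lambda}^2(\mu)$ together with $B_{t^\ast,\lambda}^2(\mu)\le\E[\norm{\widehat\mu^{(t^\ast)}-\mu}_\lambda^2]$, hence controlled as above. For the discretisation term, I would expand
\[\Delta_\tau(\mu)^2\le 2\max_{i\ge\floor{t^\ast}+1}(\lambda_i\mu_i)^2+C\delta^2\log D,\]
use the elementary inequality $\log D\le C'\sqrt D$ valid for all $D\ge 1$ to absorb the noise part into $\sqrt D\delta^2$, and bound the max by the weak tail bias, $\max_{i\ge\floor{t^\ast}+1}(\lambda_i\mu_i)^2\le B_{\floor{t^\ast},\lambda}^2(\mu)$. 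The remaining step is to show that $B_{\floor{t^\ast},\lambda}^2(\mu)$ is itself controlled by $\E[\norm{\widehat\mu^{(t_{\mathfrak w})}-\mu}_\lambda^2]+|\kappa-D\delta^2|$ up to a numerical factor. Here I would distinguish two cases according to whether $\floor{t^\ast}\ge t_{\mathfrak w}$ (in which case monotonicity of the bias gives $B_{\floor{t^\ast},\lambda}^2\le B_{t_{\mathfrak w},\lambda}^2\le V_{t_{\mathfrak w},\lambda}\le \E[\norm{\widehat\mu^{(t_{\mathfrak w})}-\mu}_\lambda^2]$) or $\floor{t^\ast}<t_{\mathfrak w}$, where Lemma~\ref{LemTransferWtstarts} is combined with $B_{\floor{t^\ast},\lambda}^2\le B_{t^\ast,\lambda}^2+\lambda_{\floor{t^\ast}+1}^2\mu_{\floor{t^\ast}+1}^2$ to bring in the $|\kappa-D\delta^2|$ slack.

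For the second inequality, I would reproduce the weak-norm counterpart of~\eqref{EqBalClassOracle}: monotonicity of $t\mapsto B_{t,\lambda}^2(\mu)$ (non-increasing) and $t\mapsto V_{t,\lambda}=t\delta^2$ (non-decreasing) combined with the definition of $t_{\mathfrak w}$ entails
\[\min_{t\in[m_0,D]}\E[\norm{\widehat\mu^{(t)}-\mu}_\lambda^2]\ge \min_{t\in[m_0,D]}\max\bigl(B_{t,\lambda}^2(\mu),V_{t,\lambda}\bigr)\ge V_{t_{\mathfrak w},\lambda}\ge \tfrac12\E[\norm{\widehat\mu^{(t_{\mathfrak w})}-\mu}_\lambda^2],\]
using $B_{t_{\mathfrak w},\lambda}^2(\mu)\le V_{t_{\mathfrak w},\lambda}$ in the last step. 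The hard part of the proof is the technical control of the single interpolation coefficient $\lambda_{\floor{t^\ast}+1}^2\mu_{\floor{t^\ast}+1}^2$ inside the max-bias term of $\Delta_\tau(\mu)^2$: when $t^\ast$ is close to an integer from below the interpolation weight $(1-\sqrt{t^\ast-\floor{t^\ast}})^2$ degenerates to zero, so this coefficient is not a priori dominated by $B_{t^\ast,\lambda}^2$; the case analysis on the position of $\floor{t^\ast}$ relative to $t_{\mathfrak w}$ described above is precisely what is needed to handle this degenerate regime.
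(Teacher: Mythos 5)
Your plan reproduces the paper's proof: split the weak risk via the triangle inequality (the paper splits at $t_{\mathfrak w}$ and you at $t^\ast$, but this is equivalent after one application of Lemma~\ref{LemTransferWtstarts}), bound the deviation term with Proposition~\ref{PropMainBound}, and transfer between $t^\ast$ and $t_{\mathfrak w}$ with Lemma~\ref{LemTransferWtstarts}. The paper's one-line version of the $\Delta_\tau(\mu)^2$ step is the estimate $\Delta_\tau(\mu)^2\lesssim B_{t^\ast,\lambda}^2(\mu)+\sqrt D\delta^2$, and your final paragraph correctly identifies that this is where the real work lies.

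However, your proposed resolution of that point does not actually close it. You reduce the problem to bounding $\lambda_{\floor{t^\ast}+1}^2\mu_{\floor{t^\ast}+1}^2$, and in your case (b) ($\floor{t^\ast}<t_{\mathfrak w}$) you invoke Lemma~\ref{LemTransferWtstarts} together with $B_{\floor{t^\ast},\lambda}^2\le B_{t^\ast,\lambda}^2+\lambda_{\floor{t^\ast}+1}^2\mu_{\floor{t^\ast}+1}^2$, but neither of these controls the additional term $\lambda_{\floor{t^\ast}+1}^2\mu_{\floor{t^\ast}+1}^2$. Consider the regime where the weak bias beyond $\floor{t^\ast}$ is concentrated in the single coefficient $\floor{t^\ast}+1$ and $s:=t^\ast-\floor{t^\ast}$ is close to $1$: then $B_{t^\ast,\lambda}^2(\mu)=(1-\sqrt s)^2\lambda_{\floor{t^\ast}+1}^2\mu_{\floor{t^\ast}+1}^2$ can be arbitrarily small compared to $\lambda_{\floor{t^\ast}+1}^2\mu_{\floor{t^\ast}+1}^2$, while $t_{\mathfrak w}$ may also lie in $(\floor{t^\ast},\floor{t^\ast}+1)$, so neither $B_{t_{\mathfrak w},\lambda}^2(\mu)$, nor $|\kappa-D\delta^2|$, nor $\sqrt D\delta^2$ need dominate this coefficient. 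Your case split on the position of $\floor{t^\ast}$ relative to $t_{\mathfrak w}$ therefore does not, by itself, dispose of the degenerate regime; a separate probabilistic argument (e.g.\ that stopping strictly before $\floor{t^\ast}+1$ is exponentially unlikely when this coefficient is large, so its contribution to the risk is negligible) is required to complete the step. Be aware that the paper's own one-line estimate $\Delta_\tau(\mu)^2\lesssim B_{t^\ast,\lambda}^2(\mu)+\sqrt D\delta^2$ faces precisely the same obstruction, so the issue you flag is not an artefact of your presentation but a genuine gap that needs a dedicated argument.

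Your derivation of the second inequality (the weak-norm analogue of~\eqref{EqBalClassOracle}) is correct and matches the paper.
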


\begin{proof}
For the first bound use
\[ \E\big[\norm{\widehat\mu^{(\tau)}-\mu}_{\lambda}^2\big]\le 2\E\big[\norm{\widehat\mu^{(t_{\mathfrak w})}-\mu}_{\lambda}^2\big]+2\E\big[\norm{\widehat\mu^{(\tau)}-\widehat\mu^{(t_{\mathfrak w})}}_{\lambda}^2\big]
\]
 and apply Proposition~\ref{PropMainBound} and Lemma~\ref{LemTransferWtstarts} with the estimates $2\delta B_{t^\ast,\lambda}(\mu)\le \delta^2+B_{t^\ast,\lambda}^2(\mu)$, $\Delta_\tau(\mu)^2\lesssim B_{t^\ast,\lambda}^2(\mu)+\sqrt D \delta^2$, $B_{t^\ast,\lambda}^2(\mu)\le B_{t_{\mathfrak w},\lambda}^2(\mu)+\abs{\kappa-D\delta^2}$ ('$\lesssim$' denotes an inequality up to a numerical factor). The second bound  follows exactly as \eqref{EqBalClassOracle}.
\end{proof}

In weak norm, we have thus obtained a completely general oracle inequality for our early stopping rule. In view of the lower bounds, the "residual term" of order $\sqrt D\delta^2$, which is much larger than the usual parametric order $\delta^2$, is unavoidable. This will be developed further in the strong norm error analysis.

\subsection{Upper bounds in strong norm} \label{upper in strong norm}


%
%

In Appendix \ref{SecProp35} we derive exponential bounds for $P(R_m^2\le\kappa)$, $m<t^\ast$, in terms of the weak bias and deduce by partial summation the following weak bias deviation inequality:

\begin{proposition}\label{PropBiasbound}
We have
\[\E\big[(B_{\tau,\lambda}^2(\mu)-B_{t^\ast,\lambda}^2(\mu))_+\big]\le \big(17\sqrt{D}+64\big)\delta^2+B_{t^\ast,\lambda}^2(\mu) D^{-1/2}.
\]
\end{proposition}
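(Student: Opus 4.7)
The plan is to exploit the monotonicity $\{\tau\le m\}=\{R_m^2\le \kappa\}$ (for $m\ge m_0$), which follows because the partial residual sequence $m\mapsto R_m^2$ is non-increasing. Setting $a_m := (B_{m,\lambda}^2(\mu)-B_{t^\ast,\lambda}^2(\mu))_+$, so that $m \mapsto a_m$ is non-increasing with $a_m = 0$ for $m\ge \lceil t^\ast\rceil$, Abel summation gives
\[
\E\big[(B_{\tau,\lambda}^2(\mu)-B_{t^\ast,\lambda}^2(\mu))_+\big]=\E[a_\tau]=\sum_{m=m_0}^{\floor{t^\ast}}(a_m - a_{m+1})\,P(R_m^2\le \kappa),
\]
with the elementary bound $a_m-a_{m+1}\le \lambda_{m+1}^2\mu_{m+1}^2$ for each $m$. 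This converts the problem from a bound on a random bias into a weighted sum over deterministic increments times left-tail residual probabilities.

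Next I would invoke (or prove as the appendix portion) a Laurent–Massart / Bernstein-type lower tail inequality for the non-central chi-square variable $R_m^2$. From \eqref{EqRtstarkappa} and the definition of the weak bias, for $m< t^\ast$ one has
\[
\E_\mu[R_m^2]-\kappa \;\ge\; a_m + (t^\ast - m)\delta^2 - \tfrac12\delta^2,
\]
and the sub-exponential concentration of $R_m^2$ with variance proxy of order $(D-m)\delta^4+B_{m,\lambda}^2(\mu)\delta^2$ produces
\[
P_\mu(R_m^2\le \kappa)\;\le\;\exp\!\Big(-c\,\frac{\big(a_m+(t^\ast-m)\delta^2\big)^{2}}{(D-m)\delta^4+B_{m,\lambda}^2(\mu)\,\delta^2}\Big),
\]
for an absolute constant $c>0$. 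This is the place where the $\sqrt{D}$ scale enters: the Gaussian exponent in the variance-dominated regime decays in $(t^\ast-m)/\sqrt{D-m}$.

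Plugging the deviation bound into the Abel sum, I would split the range $m\in\{m_0,\dots,\floor{t^\ast}\}$ into two regimes. In the \emph{noise-dominated} regime, where $(t^\ast-m)\delta^2$ dominates $a_m$, the exponential factor behaves like $\exp(-c(t^\ast-m)^2/D)$, and summing $\lambda_{m+1}^2\mu_{m+1}^2\cdot\exp(-c(t^\ast-m)^2/D)$ against a crude bound on the increments gives a geometric-type series producing $O(\sqrt{D}\,\delta^2)$ after relating $\sum \lambda_{m+1}^2\mu_{m+1}^2$ to $\sqrt{D}\delta^2$ via the Gaussian integral $\int_0^\infty e^{-cu^2/D}du=O(\sqrt D)$. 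In the \emph{bias-dominated} regime, I use $a_m\le B_{m,\lambda}^2(\mu)=a_m+B_{t^\ast,\lambda}^2(\mu)$ to bound the exponent from below by $c\,a_m/\delta^2$, so the exponential decay in $a_m$, combined with the telescoping increments $a_m-a_{m+1}$, yields $B_{t^\ast,\lambda}^2(\mu)\,D^{-1/2}$ (the factor $D^{-1/2}$ arises from the ratio $a_m/B_{m,\lambda}^2(\mu)\le 1$ together with the $(D-m)^{-1/2}$ normalization in the Gaussian tail).

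The main obstacle will be the bookkeeping of constants in the two regimes so that the explicit numerical prefactors $17$ and $64$ come out: the structural argument (Abel summation, monotone residual, sub-exponential lower tail, regime split) is routine, but matching the exact bound requires care with the Laurent–Massart constants, the handling of the fractional part at $\floor{t^\ast}$, and the passage between the $\sqrt{D-m}$ and $\sqrt{D}$ normalizations when $m$ can be small.
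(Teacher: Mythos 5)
Your opening is on the right track: the Abel summation (converting $\E[a_\tau]$ into $\sum_m (a_m-a_{m+1})P(\tau\le m)$), the identification $\{\tau\le m\}=\{R_m^2\le\kappa\}$, and the Laurent--Massart lower-tail bound for the non-central $\chi^2$ residual are exactly what the paper uses. You also correctly write $\E_\mu[R_m^2]-\kappa= a_m+(t^\ast-m)\delta^2$ for integer $m<t^\ast$, though the paper deliberately only retains the weaker $\E_\mu[R_m^2]-\kappa\ge a_m$, which turns out to be enough and simplifies the accounting considerably.

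The gap is in your treatment of the final sum. Your ``noise-dominated regime'' argument sums $\lambda_{m+1}^2\mu_{m+1}^2 e^{-c(t^\ast-m)^2/D}$ over $u=t^\ast-m$ and claims this is $O(\sqrt D\,\delta^2)$ via $\int_0^\infty e^{-cu^2/D}\,du = O(\sqrt D)$. But the increments $\lambda_{m+1}^2\mu_{m+1}^2 = a_m - a_{m+1}$ are not pointwise bounded by $O(\delta^2)$; the only control you have in this regime is $a_m - a_{m+1}\le a_m\le (t^\ast-m)\delta^2$, and then $\sum_u u\delta^2 e^{-cu^2/D}\asymp D\delta^2$, which is too large by a factor $\sqrt D$. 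The regime-split over $m$ is the wrong change of variable. The correct move (and the paper's) is to observe that the Abel sum $\sum_m (a_m-a_{m+1}) F(a_m)$, with $F$ the tail bound viewed as a non-increasing function of $z=a_m$, is a lower Riemann--Stieltjes sum for $\int_0^{a_{m_0}}F(z)\,dz$, so it is bounded by $\int_0^\infty F(z)\,dz$. With $F(z)=\exp(-z^2/(16\delta^4 D))+\exp(-z^2/(32\delta^2(B_{t^\ast,\lambda}^2(\mu)+z)))$, the first exponential integrates to $\sqrt{4\pi\delta^4 D}$, the second is split at $z=B_{t^\ast,\lambda}^2(\mu)$ into a Gaussian piece $\sqrt{16\pi\delta^2 B_{t^\ast,\lambda}^2(\mu)}$ and an exponential piece $\le 64\delta^2$, and a final AM--GM step $4\sqrt\pi\,\delta B_{t^\ast,\lambda}(\mu)\le 4\pi\sqrt D\,\delta^2+D^{-1/2}B_{t^\ast,\lambda}^2(\mu)$ produces the stated constants. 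So the missing idea is the change of variable from $m$ to $z=a_m$ together with the monotone-integral comparison; without it your split would lose a factor of $\sqrt D$.
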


This is the probabilistic basis for the main bias oracle inequality.


\begin{proposition}\label{PropBiastautstar}
We have the balanced oracle inequality for the strong bias
\[
  \E[(B_\tau^2(\mu)-B_{t_{\mathfrak s}}^2(\mu))_+] \le  81\lambda_{\floor{t_{\mathfrak s}}+1}^{-2}\delta^2\Big( t_{\mathfrak s}+\sqrt D +(\kappa\delta^{-2}-D)_+\Big) .
\]
\end{proposition}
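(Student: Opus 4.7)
The plan is to pass from the strong bias difference to the weak one, then decompose via the oracle-proxy $t^\ast$, and finally apply the existing bounds.

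\textbf{Step 1: Strong-to-weak bias transfer.} On $\{\tau \ge t_{\mathfrak s}\}$ both sides vanish by monotonicity of $t\mapsto B_t^2(\mu)$, so it suffices to treat $\{\tau<t_{\mathfrak s}\}$. Because $\tau$ is integer-valued while $t_{\mathfrak s}\in[0,D]$ is real, an explicit calculation of the telescoping differences gives
\[
 B_\tau^2(\mu)-B_{t_{\mathfrak s}}^2(\mu)
 = \sum_{i=\tau+1}^{\floor{t_{\mathfrak s}}}\mu_i^2
  +\bigl(2\sqrt{t_{\mathfrak s}-\floor{t_{\mathfrak s}}}-(t_{\mathfrak s}-\floor{t_{\mathfrak s}})\bigr)\mu_{\floor{t_{\mathfrak s}}+1}^2,
\]
and the analogous identity with weights $\lambda_i^2$ for $B_{\tau,\lambda}^2-B_{t_{\mathfrak s},\lambda}^2$. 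All indices involved lie in $\{\tau+1,\ldots,\floor{t_{\mathfrak s}}+1\}$, so $\lambda_i\ge \lambda_{\floor{t_{\mathfrak s}}+1}$, which yields the key pointwise inequality
\[
 \bigl(B_\tau^2(\mu)-B_{t_{\mathfrak s}}^2(\mu)\bigr)_+
 \;\le\; \lambda_{\floor{t_{\mathfrak s}}+1}^{-2}\,\bigl(B_{\tau,\lambda}^2(\mu)-B_{t_{\mathfrak s},\lambda}^2(\mu)\bigr)_+.
\]

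\textbf{Step 2: Insert the oracle-proxy $t^\ast$.} Using $(a+b)_+\le a_+ + b_+$, split
\[
 \bigl(B_{\tau,\lambda}^2(\mu)-B_{t_{\mathfrak s},\lambda}^2(\mu)\bigr)_+
 \le \bigl(B_{\tau,\lambda}^2(\mu)-B_{t^\ast,\lambda}^2(\mu)\bigr)_+
  + \bigl(B_{t^\ast,\lambda}^2(\mu)-B_{t_{\mathfrak s},\lambda}^2(\mu)\bigr)_+.
\]
The first (random) term is controlled in expectation by Proposition~\ref{PropBiasbound}, and the second (deterministic) term by the bias part of Lemma~\ref{LemTransferWtstarts} together with $t_{\mathfrak w}\le t_{\mathfrak s}$: indeed
\[
 B_{t^\ast,\lambda}^2(\mu)\le B_{t_{\mathfrak w},\lambda}^2(\mu)+(\kappa-D\delta^2)_+
 \;=\;V_{t_{\mathfrak w},\lambda}+(\kappa-D\delta^2)_+
 \;\le\; t_{\mathfrak s}\delta^2+(\kappa-D\delta^2)_+,
\]
using $B_{t_{\mathfrak w},\lambda}^2(\mu)=V_{t_{\mathfrak w},\lambda}=t_{\mathfrak w}\delta^2\le t_{\mathfrak s}\delta^2$ by definition of $t_{\mathfrak w}$. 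Since $B_{t_{\mathfrak s},\lambda}^2(\mu)\ge 0$, the deterministic term is bounded by $t_{\mathfrak s}\delta^2+(\kappa-D\delta^2)_+$.

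\textbf{Step 3: Assemble.} From Proposition~\ref{PropBiasbound} and $B_{t^\ast,\lambda}^2(\mu)\le t_{\mathfrak s}\delta^2+(\kappa-D\delta^2)_+\le D\delta^2+(\kappa-D\delta^2)_+$, together with $t_{\mathfrak s}D^{-1/2}\le\sqrt D$,
\[
 \E\bigl[(B_{\tau,\lambda}^2(\mu)-B_{t^\ast,\lambda}^2(\mu))_+\bigr]
 \le (17\sqrt D+64)\delta^2+\sqrt D\,\delta^2+D^{-1/2}(\kappa-D\delta^2)_+.
\]
Adding the deterministic term and using $64\le 64\sqrt D$ for $D\ge 1$ gives
\[
 \E\bigl[(B_{\tau,\lambda}^2(\mu)-B_{t_{\mathfrak s},\lambda}^2(\mu))_+\bigr]
 \le \delta^2\bigl(81\sqrt D + t_{\mathfrak s} + (\kappa\delta^{-2}-D)_+\bigr),
\]
after absorbing the $D^{-1/2}$-contribution. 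Multiplication by $\lambda_{\floor{t_{\mathfrak s}}+1}^{-2}$ and Step~1 give the claim.

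\textbf{Main obstacle.} The only delicate point is keeping track of the continuous interpolation in $t^\ast,t_{\mathfrak w},t_{\mathfrak s}$ in Step~1 so that the factor $\lambda_{\floor{t_{\mathfrak s}}+1}^{-2}$ is indeed uniform over all relevant indices; everything else is a straightforward combination of Proposition~\ref{PropBiasbound} with the deterministic Lemma~\ref{LemTransferWtstarts} via the chain $t^\ast\le t_{\mathfrak w}+(D-\kappa\delta^{-2})_+$ and $t_{\mathfrak w}\le t_{\mathfrak s}$.
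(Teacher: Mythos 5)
Your proof takes the same route as the paper: transfer from strong to weak bias via the factor $\lambda_{\floor{t_{\mathfrak s}}+1}^{-2}$ (valid because only indices $\le\floor{t_{\mathfrak s}}+1$ contribute), split through the oracle-proxy $t^\ast$, control the random part by Proposition~\ref{PropBiasbound} and the deterministic part via the weak-bias control of Lemma~\ref{LemTransferWtstarts} together with $t_{\mathfrak w}\le t_{\mathfrak s}$. Two small points deserve attention. First, $B_{t_{\mathfrak w},\lambda}^2(\mu)=V_{t_{\mathfrak w},\lambda}$ is an equality only when $t_{\mathfrak w}>m_0$; in the boundary case $t_{\mathfrak w}=m_0$ you only have $\le$, which is all you need, so this is a matter of wording. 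Second, and more substantively, the arithmetic in Step~3 does not actually produce the stated constant: after absorbing $t_{\mathfrak s}D^{-1/2}\le\sqrt D$ you are left with $(18\sqrt D+64)\delta^2+t_{\mathfrak s}\delta^2+(1+D^{-1/2})(\kappa-D\delta^2)_+$, and with $64\le 64\sqrt D$ and $1+D^{-1/2}\le 2$ this gives $82\delta^2\bigl(\sqrt D+t_{\mathfrak s}+(\kappa\delta^{-2}-D)_+\bigr)$, not $\delta^2\bigl(81\sqrt D+t_{\mathfrak s}+(\kappa\delta^{-2}-D)_+\bigr)$ as you claim. To recover the constant $81$, do not split $B_{t^\ast,\lambda}^2(\mu)D^{-1/2}$; instead keep the factor $(1+D^{-1/2})\le 2$ attached to the full quantity $B_{t^\ast,\lambda}^2(\mu)\le t_{\mathfrak s}\delta^2+(\kappa-D\delta^2)_+$, giving $(17\sqrt D+64)\delta^2+2t_{\mathfrak s}\delta^2+2(\kappa-D\delta^2)_+\le 81\delta^2\bigl(\sqrt D+t_{\mathfrak s}+(\kappa\delta^{-2}-D)_+\bigr)$, which is exactly how the paper proceeds (via $B_{t^\ast,\lambda}^2(\mu)\le\delta^2(t^\ast+\kappa\delta^{-2}-D)$ and then \eqref{Eqtoracles}).
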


\begin{proof}
On  the event $\{\tau  \ge t_{\mathfrak s}\}$ we have $B_{\tau}^2(\mu)\le B_{t_{\mathfrak s}}^2(\mu)$. On $\{\tau< t_{\mathfrak s}\}$ we have
\[B_{\tau}^2(\mu)- B_{t_{\mathfrak s}}^2(\mu)\le \lambda_{\floor{t_{\mathfrak s}}+1}^{-2}(B_{\tau,\lambda}^2(\mu)- B_{t_{\mathfrak s},\lambda}^2(\mu)),
\]
using the fact that only coefficients up to index $\floor{t_{\mathfrak s}}+1$ enter into the bias differences.
From the weak bias control given by Proposition \ref{PropBiasbound}
it follows that
\begin{align*}
\E[(B_\tau^2(\mu)&-B_{t_{\mathfrak s}}^2(\mu))_+]\\
 & \le  \lambda_{\floor{t_{\mathfrak s}}+1}^{-2}\Big(\E[(B_{\tau,\lambda}^2(\mu)-B_{t^\ast,\lambda}^2(\mu))_+]+(B_{t^\ast,\lambda}^2(\mu)-B_{t_{\mathfrak s},\lambda}^2(\mu))_+\Big)\\
& \le  \lambda_{\floor{t_{\mathfrak s}}+1}^{-2}\Big( (17\sqrt D+64)\delta^2+(1+D^{-1/2})B_{t^\ast,\lambda}^2(\mu)\Big)\\
& \le  \lambda_{\floor{t_{\mathfrak s}}+1}^{-2}\delta^2\Big( 81\sqrt D+2(t^\ast+\kappa\delta^{-2}-D)\Big) ,
\end{align*}
where in the last line we used  $\kappa\ge B_{t^\ast,\lambda}^2(\mu)+D\delta^2-V_{t^\ast,\lambda}$ (see \eqref{EqRtstarkappa} and afterwards) and $V_{t^\ast,\lambda}=t^\ast\delta^2$. By \eqref{Eqtoracles} we see $t^\ast\le t_{\mathfrak s}+(D-\kappa\delta^{-2})_+$ and the result follows.
\end{proof}

To assess the size of the bias bound, let us assume  the polynomial decay \eqref{eq:polydecay}. Then a Riemann sum approximation yields for any $t\in[1,D]$
\[ \delta^{-2}V_t=\sum_{i=1}^{\floor{t}}\lambda_i^{-2}+(t-\floor{t})\lambda_{\floor{t}+1}^{-2}\ge C_A^{-2}\int_0^t x^{2p}dx= C_A^{-2}(2p+1)^{-1}t^{2p+1}.\]
Noting $t\lambda_{\floor{t}+1}^{-2}\le C_A^{2}(\floor{t}+1)^{2p+1}\le C_A^{2}(2t)^{2p+1}$, we thus obtain
\begin{equation}\label{EqVstar}
\lambda_{\floor{t}+1}^{-2}t\delta^2\le (1+2p)2^{2p+1}C_A^4V_{t}.
\end{equation}
Consequently, we can estimate $\E[(B_{\tau}^2(\mu)-B_{t_{\mathfrak s}}^2(\mu))_+]\lesssim V_{t_{\mathfrak s}}$ in the case $ t_{\mathfrak s}\gtrsim \max(\sqrt D,\kappa\delta^{-2}-D)$. This means that the bias bound is upper bounded by the balanced strong oracle risk.

Let us see by a counterexample that 
$(B_{t^\ast}^2(\mu)-B_{t_{\mathfrak s}}^2(\mu))_+$ can be of the same  order as the strongly balanced risk
itself, meaning that the bound of Proposition \ref{PropBiastautstar} is not too pessimistic in general. Suppose $\kappa=D\delta^2$ ( so that $t^\ast=t_{\mathfrak w}$), $\mu_D\not=0$ and $\delta,D$ such that $t_{\mathfrak s}=D-3/4$. This gives $\mu_D^2/4=B_{t_{\mathfrak s}}^2(\mu)=V_{t_{\mathfrak s}}$. In weak norm we have  $B_{D-1,\lambda}^2(\mu)=\lambda_D^2\mu_D^2$, $V_{D-1,\lambda}=\delta^2(D-1)$ and consequently $t_{\mathfrak w}\le D-1$ if $\lambda_D^2\mu_D^2\le \delta^2(D-1)$. In that case, $B_{t^\ast}^2(\mu)\ge \mu_D^2=4 B_{t_{\mathfrak s}}^2(\mu)$ holds and we must indeed pay a positive factor for using the weak oracle in strong norm.  We can meet the bound $\lambda_D^2\mu_D^2\le \delta^2(D-1)$ under the constraint $\mu_D^2/4=V_{D-3/4}=\delta^2(\lambda_D^{-2}/4+\sum_{i=1}^{D-1}\lambda_i^{-2})$ for instance for $\lambda_i=i^{-p}$ with $p>3/2$ and $D$ sufficiently large.


For the stochastic error  we use in Appendix \ref{SecProp37} exponential inequalities for $P(R_{m-1}^2>\kappa)$, $m>t^\ast$, to obtain the following bound:

\begin{proposition}\label{PropVartautstar} We have the oracle-proxy inequality for the strong norm stochastic error
\[\E[(S_\tau-S_{t^\ast})_+]\le r_{V,\tau}\delta^2\text{ with }r_{V,\tau} :=\min\Big(2\sqrt 3  \sum_{m=\floor{t^\ast}+1}^D \lambda_{m}^{-2}e^{-\frac{(m-1-t^\ast)_+^2}{16D+32\kappa\delta^{-2}}},\,D\Big).
\]
If the polynomial decay condition \eqref{eq:polydecay} is satisfied, then
\begin{align}
 r_{V,\tau}\delta^2 
 &\le  C_pC_A^4\Big(\big((D+\kappa\delta^{-2})/ (t^\ast)^2\big)^{1/2}+ \big((D+\kappa\delta^{-2})/ (t^\ast)^2\big)^{p+1/2}\Big)V_{t^\ast}\label{EqrVtau}
\end{align}
holds with a constant $C_p$, only depending on $p$.
\end{proposition}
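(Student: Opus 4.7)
The plan is to reduce $\E\big[(S_\tau - S_{t^\ast})_+\big]$ to a weighted sum indexed by $m>t^\ast$, then plug in the exponential tail estimate for $R_{m-1}^2$ from Appendix~\ref{SecProp37}, and finally specialise under \eqref{eq:polydecay}. Since $t\mapsto S_t$ is almost-surely non-decreasing (its coefficients $\delta^2\lambda_i^{-2}\eps_i^2$ are non-negative), $(S_\tau - S_{t^\ast})_+$ vanishes on $\{\tau \le t^\ast\}$. On $\{\tau>t^\ast\}$ the integer-valued stopping time satisfies $\tau\ge \floor{t^\ast}+1$, and after absorbing the fractional weight $1-(t^\ast-\floor{t^\ast})\le 1$ at the boundary index, one arrives at
\[
(S_\tau-S_{t^\ast})_+ \le \delta^2\sum_{m=\floor{t^\ast}+1}^{D}\lambda_m^{-2}\eps_m^2\,\mathbf{1}(\tau\ge m).
\]
Taking expectations and exchanging sum and expectation yields the core identity
\[
\E\big[(S_\tau-S_{t^\ast})_+\big]\le \delta^2\sum_{m=\floor{t^\ast}+1}^{D}\lambda_m^{-2}\,\E\big[\eps_m^2\,\mathbf{1}(\tau\ge m)\big].
\]

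For each individual term, the indicator $\mathbf{1}(\tau\ge m)$ is \emph{not} independent of $\eps_m$ (because $R_{m-1}^2$ involves $Y_m$), so the clean workaround is Cauchy--Schwarz: $\E[\eps_m^2\mathbf{1}(\tau\ge m)]\le (\E\eps_m^4)^{1/2}\,P(\tau\ge m)^{1/2}=\sqrt{3}\,P(\tau\ge m)^{1/2}$. Monotonicity of $m\mapsto R_m^2$ gives $\{\tau\ge m\}=\{R_{m-1}^2>\kappa\}$ for $m\ge m_0+1$, and the Bernstein-type exponential inequality from Appendix~\ref{SecProp37} yields a bound of the form $P(R_{m-1}^2>\kappa)\le c_0\exp\!\big(-2(m-1-t^\ast)_+^2/(16D+32\kappa\delta^{-2})\big)$ for $m>t^\ast$. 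Taking the square root reproduces the exponential factor $e^{-(m-1-t^\ast)_+^2/(16D+32\kappa\delta^{-2})}$ appearing in the proposition, with the constant $2\sqrt{3}$ combining $\sqrt{3}$ from the fourth moment of a standard Gaussian with $\sqrt{c_0}$ from the tail bound. The alternative $r_{V,\tau}\le D$ is a trivial safeguard, obtained by using $\mathbf{1}(\tau\ge m)\le 1$ over the at-most-$D$ summands, which protects against the regime where the exponential bound is ineffective.

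For the polynomial decay sharpening, I substitute $\lambda_m^{-2}\le C_A^2 m^{2p}$, set $L=16D+32\kappa\delta^{-2}$, and approximate the discrete sum by the integral $\sqrt{L}\int_0^\infty (t^\ast+v\sqrt{L})^{2p}e^{-v^2}\,dv$ via the change of variables $v=(m-1-t^\ast)/\sqrt{L}$. The elementary inequality $(t^\ast+v\sqrt{L})^{2p}\le 2^{2p}\big((t^\ast)^{2p}+(v\sqrt{L})^{2p}\big)$ splits the integral into two Gaussian moments, giving a bound $\lesssim C_A^2\big(\sqrt{L}\,(t^\ast)^{2p}+L^{p+1/2}\big)$ with a constant depending only on $p$. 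Dividing by the Riemann-sum lower bound $V_{t^\ast}\gtrsim C_A^{-2}(2p+1)^{-1}(t^\ast)^{2p+1}\delta^2$ (as in the derivation preceding \eqref{EqVstar}) transforms these two summands into $(L/(t^\ast)^2)^{1/2}$ and $(L/(t^\ast)^2)^{p+1/2}$ respectively, and $L\asymp D+\kappa\delta^{-2}$ produces \eqref{EqrVtau}. The main technical obstacle is importing from Appendix~\ref{SecProp37} a tail bound with exactly the exponent structure claimed in the statement; granted that, the remainder is Cauchy--Schwarz together with routine Gaussian integral estimates.
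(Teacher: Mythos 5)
Your outline follows the paper's own proof almost exactly: decompose $(S_\tau-S_{t^\ast})_+$ into a sum over $m>t^\ast$ weighted by $\lambda_m^{-2}\eps_m^2\mathbf{1}(\tau\ge m)$, apply Cauchy--Schwarz together with $\E[\eps_m^4]^{1/2}=\sqrt 3$, use $\{\tau\ge m\}=\{R_{m-1}^2>\kappa\}$ for $m>m_0$, plug in a sub-exponential tail bound, and under \eqref{eq:polydecay} estimate the resulting series by a Gaussian-type integral and divide by $V_{t^\ast}\gtrsim C_A^{-2}(t^\ast)^{2p+1}\delta^2$. The parts you actually carry out (the Cauchy--Schwarz reduction, the trivial bound $r_{V,\tau}\le D$, the change of variables and the split $(t^\ast+v\sqrt{L})^{2p}\le 2^{2p}((t^\ast)^{2p}+(v\sqrt{L})^{2p})$) are all correct and match the paper in spirit.

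The genuine gap is the tail bound itself. You write that you will ``import from Appendix~\ref{SecProp37} a tail bound with exactly the exponent structure claimed,'' but Appendix~\ref{SecProp37} \emph{is} the proof of the proposition you are asked to establish, so that step is circular. Deriving $P(R_{m-1}^2>\kappa)\lesssim \exp\!\bigl(-(m-1-t^\ast)^2/(16D+32\kappa\delta^{-2})\bigr)$ is the technical core and requires (i) writing $R_{m-1}^2-\E[R_{m-1}^2]=\sum_{i\ge m}\bigl(\delta^2(\eps_i^2-1)+2\lambda_i\mu_i\delta\eps_i\bigr)$ and splitting the deviation event between a centred $\chi^2$-part and a Gaussian linear part; (ii) applying the Laurent--Massart upper-tail inequality $P\bigl(\sum a_i(\eps_i^2-1)>x\bigr)\le\exp\bigl(-x^2/(4\|a\|^2+4x\max_i a_i)\bigr)$ to the first part and a standard Gaussian tail to the second; (iii) using the defining relation \eqref{EqRtstarkappa} for $t^\ast$ and the monotonicity of $m\mapsto\E[R_m^2]$ to lower bound $\kappa-\E[R_{m-1}^2]\ge\delta^2(m-1-t^\ast)$; and (iv) bounding the denominators by $16\delta^4 D+8\delta^2\kappa$ and $32\delta^2\kappa$ respectively. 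None of these steps appears in your proposal, and (iii) in particular is not routine: it is where the oracle-proxy index $t^\ast$ enters and where the precise form $(m-1-t^\ast)_+$ comes from. Until this deviation inequality is proved, the argument is incomplete.
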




\begin{corollary}\label{CorTransfertstarts}
We have the balanced oracle inequality for the stochastic error
\[  
 \E[(S_\tau-S_{t_{\mathfrak s}})_+]\le \Big(r_{V,\tau}+ \lambda_{\floor{t^\ast}+1}^{-2}(D -\kappa \delta^{-2})_+\Big)\delta^2.
\]
\end{corollary}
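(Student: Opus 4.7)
The plan is to pivot through the oracle-proxy index $t^\ast$ and use the trivial bound
\[
(S_\tau - S_{t_{\mathfrak s}})_+ \le (S_\tau - S_{t^\ast})_+ + (S_{t^\ast} - S_{t_{\mathfrak s}})_+.
\]
Taking expectations, the first summand is already controlled by Proposition~\ref{PropVartautstar}, which contributes the $r_{V,\tau}\delta^2$ term. Everything else therefore reduces to estimating the second, purely deterministic-in-index term $\E[(S_{t^\ast}-S_{t_{\mathfrak s}})_+]$.

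The key observation is that $t \mapsto S_t$ is almost surely nondecreasing: inspecting the definition of $S_t$ shows that on each interval $[m,m+1]$ the value grows continuously in $t$ from $S_m$ to $S_{m+1}=S_m+\delta^2\lambda_{m+1}^{-2}\eps_{m+1}^2$, since the interpolation factor $(t-\floor{t})$ is nondecreasing. Because $t^\ast$ and $t_{\mathfrak s}$ are deterministic, the sign of $S_{t^\ast}-S_{t_{\mathfrak s}}$ is itself deterministic. Hence
\[
\E[(S_{t^\ast}-S_{t_{\mathfrak s}})_+] = (\E[S_{t^\ast}] - \E[S_{t_{\mathfrak s}}])_+ = (V_{t^\ast}-V_{t_{\mathfrak s}})_+,
\]
using $\E[S_t]=V_t$ which follows from $\E[\eps_i^2]=1$.

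If $t^\ast \le t_{\mathfrak s}$ there is nothing to bound. Otherwise, writing $V_t$ as the sum of $\lambda_i^{-2}\delta^2$ over $i$ up to $\floor{t}$ (with the usual interpolation term) and using that $\lambda_i$ is nonincreasing, one gets for $i \le \floor{t^\ast}+1$ the bound $\lambda_i^{-2} \le \lambda_{\floor{t^\ast}+1}^{-2}$, so that
\[
V_{t^\ast}-V_{t_{\mathfrak s}} \le \lambda_{\floor{t^\ast}+1}^{-2}(V_{t^\ast,\lambda} - V_{t_{\mathfrak s},\lambda}) = \lambda_{\floor{t^\ast}+1}^{-2}\delta^2 (t^\ast - t_{\mathfrak s})_+.
\]
Finally, \eqref{Eqtoracles} gives $(t^\ast - t_{\mathfrak s})_+ \le (D-\kappa\delta^{-2})_+$, and combining with the contribution from Proposition~\ref{PropVartautstar} yields the claimed inequality. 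There is no real obstacle here; the only subtle point is recognising that the almost-sure monotonicity of $S_t$ collapses $\E[(S_{t^\ast}-S_{t_{\mathfrak s}})_+]$ to a difference of the deterministic weak variances, after which the bound is purely a matter of using monotonicity of $\lambda_i$ and invoking \eqref{Eqtoracles}.
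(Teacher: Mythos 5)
Your proof is correct and takes essentially the same route as the paper: decompose via $t^\ast$, use almost-sure monotonicity of $t\mapsto S_t$ together with the determinism of $t^\ast,t_{\mathfrak s}$ to identify $\E[(S_{t^\ast}-S_{t_{\mathfrak s}})_+]=(V_{t^\ast}-V_{t_{\mathfrak s}})_+$, then pass to weak variances via the $\lambda_{\floor{t^\ast}+1}^{-2}$ factor. The only cosmetic difference is that the paper pivots explicitly through $t_{\mathfrak w}$ and invokes Lemma~\ref{LemTransferWtstarts} plus $V_{t_{\mathfrak w}}\le V_{t_{\mathfrak s}}$, while you invoke the already-packaged consequence \eqref{Eqtoracles} directly; these are the same ingredients in a slightly different order.
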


\begin{proof}
By the monotonicity of $S_t$ and $V_t$ in $t$ we bound
   \[
   \E[(S_\tau-S_{t_{\mathfrak s}})_+] \le \E[(S_\tau-S_{t^*})_+] + \E[(S_{t^*}-S_{t_{\mathfrak s}})_+]
     = \E[(S_\tau-S_{t^*})_+] + (V_{t^*} - V_{t_{\mathfrak s}})_+\,.
  \]
In view of Proposition \ref{PropVartautstar} it suffices to prove $(V_{t^\ast}-V_{t_{\mathfrak s}})_+\le \lambda_{\floor{t^\ast}+1}^{-2}(D\delta^2 -\kappa )_+$.
By definition of the variances, $(V_{t^\ast}-V_{t_{\mathfrak w}})_+\le \lambda_{\floor{t^\ast}+1}^{-2}(V_{t^\ast,\lambda}-V_{t_{\mathfrak w},\lambda})_+$ holds. We apply Lemma \ref{LemTransferWtstarts} and note $V_{t_{\mathfrak w}}\le V_{t_{\mathfrak s}}$ by \eqref{Eqtoracles} to conclude.
\end{proof}

Everything is prepared to prove our main strong norm result.

\begin{theorem} \label{thm_spectral_cutoff}
Assume  $\abs{\kappa-D\delta^2}\le C_\kappa \sqrt D\delta^2$. Then the following balanced oracle inequality holds in strong norm
\begin{align*}
\E\big[\norm{\widehat\mu^{(\tau)}-\mu}^2\big]&\le \E\big[\norm{\widehat\mu^{(t_{\mathfrak s})}-\mu}^2\big]\\
&\quad
+ \Big(81\lambda_{\floor{t_{\mathfrak s}+C_\kappa \sqrt D}+1}^{-2}\big( t_{\mathfrak s} +(1+C_\kappa)\sqrt D\big)+r_{V,\tau}\Big)\delta^2.
\end{align*}
If in addition the polynomial decay condition \eqref{eq:polydecay} is satisfied, then there is a constant $C>0$, only depending on $p,C_A,C_\kappa$, so that
\begin{align}
\E\big[\norm{\widehat\mu^{(\tau)}-\mu}^2\big]&\le C \E\big[\norm{\widehat\mu^{(t_{\mathfrak s}\vee \sqrt D)}-\mu}^2\big].\label{EqMainBound}
\end{align}
\end{theorem}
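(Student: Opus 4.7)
The plan is to start from the bias--stochastic-error identities in \eqref{EqBiasVarDecomp}, namely
$\E[\norm{\widehat\mu^{(\tau)}-\mu}^2]=\E[B_\tau^2(\mu)+S_\tau]$ and
$\E[\norm{\widehat\mu^{(t_{\mathfrak s})}-\mu}^2]=B_{t_{\mathfrak s}}^2(\mu)+V_{t_{\mathfrak s}}=\E[B_{t_{\mathfrak s}}^2(\mu)+S_{t_{\mathfrak s}}]$ (since $t_{\mathfrak s}$ is deterministic), and then to apply the pointwise bound $x+y\le x_0+y_0+(x-x_0)_+ +(y-y_0)_+$ to the pair $(B_\tau^2(\mu),S_\tau)$ compared with $(B_{t_{\mathfrak s}}^2(\mu),S_{t_{\mathfrak s}})$. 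Taking expectations gives
\[
\E[\norm{\widehat\mu^{(\tau)}-\mu}^2]\le \E[\norm{\widehat\mu^{(t_{\mathfrak s})}-\mu}^2]+\E[(B_\tau^2(\mu)-B_{t_{\mathfrak s}}^2(\mu))_+]+\E[(S_\tau-S_{t_{\mathfrak s}})_+],
\]
which reduces the first inequality of the theorem to controlling the two excess terms on the right-hand side, exactly the quantities handled by Proposition \ref{PropBiastautstar} and Corollary \ref{CorTransfertstarts} respectively.

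Under the assumption $\abs{\kappa-D\delta^2}\le C_\kappa\sqrt D\delta^2$, both $(\kappa\delta^{-2}-D)_+$ and $(D-\kappa\delta^{-2})_+$ are bounded by $C_\kappa\sqrt D$. Plugging this into Proposition \ref{PropBiastautstar} yields $\E[(B_\tau^2(\mu)-B_{t_{\mathfrak s}}^2(\mu))_+]\le 81\lambda_{\floor{t_{\mathfrak s}}+1}^{-2}(t_{\mathfrak s}+(1+C_\kappa)\sqrt D)\delta^2$, and into Corollary \ref{CorTransfertstarts} yields $\E[(S_\tau-S_{t_{\mathfrak s}})_+]\le r_{V,\tau}\delta^2+\lambda_{\floor{t^\ast}+1}^{-2}C_\kappa\sqrt D\delta^2$. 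From \eqref{Eqtoracles} combined with the current assumption, $t^\ast\le t_{\mathfrak s}+(D-\kappa\delta^{-2})_+\le t_{\mathfrak s}+C_\kappa\sqrt D$, so monotonicity of $i\mapsto\lambda_i^{-2}$ upgrades both $\lambda_{\floor{t_{\mathfrak s}}+1}^{-2}$ and $\lambda_{\floor{t^\ast}+1}^{-2}$ to the common upper bound $\lambda_{\floor{t_{\mathfrak s}+C_\kappa\sqrt D}+1}^{-2}$, and the additive $C_\kappa\sqrt D$ contribution from Corollary \ref{CorTransfertstarts} is dwarfed by the prefactor $81(t_{\mathfrak s}+(1+C_\kappa)\sqrt D)$, producing the first claimed inequality.

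The polynomial-decay conclusion \eqref{EqMainBound} then follows from \eqref{EqVstar}, which under \eqref{eq:polydecay} gives $\lambda_{\floor{t}+1}^{-2}t\delta^2\lesssim V_t$, together with the elementary consequence $V_{ct}\lesssim V_t$ (constants depending only on $p,C_A,c$), and a case analysis on whether $t_{\mathfrak s}\ge\sqrt D$. When $t_{\mathfrak s}\ge\sqrt D$ the quantities $t_{\mathfrak s}+C_\kappa\sqrt D$ and $t_{\mathfrak s}+(1+C_\kappa)\sqrt D$ are both of order $t_{\mathfrak s}$, so the bias additive term is $\lesssim V_{t_{\mathfrak s}}$; otherwise they are of order $\sqrt D$ and that term is $\lesssim V_{\sqrt D}$. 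For $r_{V,\tau}\delta^2$ the decisive observation is that under polynomial decay $V_s\thicksim\delta^2 s^{2p+1}$ makes the term $((D+\kappa\delta^{-2})/(t^\ast)^2)^{p+1/2}V_{t^\ast}$ in \eqref{EqrVtau} evaluate to a constant times $\delta^2 D^{p+1/2}\thicksim V_{\sqrt D}$ \emph{uniformly in} $t^\ast$, while the $1/2$-power term is dominated by $V_{t^\ast}\lesssim V_{t_{\mathfrak s}+C_\kappa\sqrt D}\lesssim V_{t_{\mathfrak s}\vee\sqrt D}$. Finally $\E[\norm{\widehat\mu^{(t_{\mathfrak s})}-\mu}^2]\le 2V_{t_{\mathfrak s}}\le 2 V_{t_{\mathfrak s}\vee\sqrt D}\le 2\E[\norm{\widehat\mu^{(t_{\mathfrak s}\vee\sqrt D)}-\mu}^2]$ by \eqref{EqBalClassOracle} handles the oracle risk itself, and assembling these bounds yields \eqref{EqMainBound} with a constant depending only on $p,C_A,C_\kappa$.

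The only real obstacle is the book-keeping: tracking which $\lambda$-index appears in each bound, verifying that the polynomial-decay cancellation in \eqref{EqrVtau} is genuinely uniform in $t^\ast$ (and not restricted to the regime $t^\ast\gtrsim\sqrt D$), and packaging the case analysis around $\sqrt D$ into a single constant. The heavy probabilistic work has already been done in Propositions \ref{PropBiastautstar} and \ref{PropVartautstar}, so the theorem itself is essentially a clean combination of those estimates with \eqref{EqVstar} and \eqref{EqBalClassOracle}.
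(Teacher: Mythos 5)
Your proof follows the paper's route almost verbatim: the $(x-x_0)_+ + (y-y_0)_+$ decomposition from \eqref{EqBiasVarDecomp}, Proposition~\ref{PropBiastautstar} for the bias excess, Corollary~\ref{CorTransfertstarts} for the stochastic excess, and then \eqref{EqVstar}, \eqref{EqrVtau} and \eqref{EqBalClassOracle} under polynomial decay. Two small technical slips prevent it from being airtight as written. First, by bounding $(\kappa\delta^{-2}-D)_+$ and $(D-\kappa\delta^{-2})_+$ separately (each by $C_\kappa\sqrt D$) you pick up an extra additive term $\lambda_{\floor{t_{\mathfrak s}+C_\kappa\sqrt D}+1}^{-2}C_\kappa\sqrt D\,\delta^2$ that is not literally absorbed into the constant $81$; the paper instead collects the two contributions before estimating, noting that at most one of the positive parts is nonzero so they sum to $\abs{\kappa\delta^{-2}-D}\le C_\kappa\sqrt D$, which is precisely how the displayed constant $81$ survives. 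Second, the assertion that the $1/2$-power term $\bigl((D+\kappa\delta^{-2})/(t^\ast)^2\bigr)^{1/2}V_{t^\ast}$ is ``dominated by $V_{t^\ast}$'' holds only when $t^\ast\gtrsim\sqrt D$; for $t^\ast<\sqrt D$ the prefactor exceeds one, but then $x\ge 1$ gives $x^{1/2}\le x^{p+1/2}$, so the term is swallowed by the $(p+1/2)$-power term, which you correctly identify as $\lesssim D^{p+1/2}\delta^2$ uniformly in $t^\ast$ — this is exactly the case distinction you flag as needing verification at the end, so you have all the pieces but have not carried it out. The paper avoids both case analyses by passing directly to the algebraic bound $r_{V,\tau}\lesssim t_{\mathfrak s}^{2p}\sqrt D + D^{p+1/2}$ via $V_{t^\ast}\lesssim(t^\ast)^{2p+1}\delta^2$ and $t^\ast\le t_{\mathfrak s}+C_\kappa\sqrt D$, which is valid for all admissible $t^\ast$ and makes the ensuing comparison with $V_{t_{\mathfrak s}\vee\sqrt D}$ immediate.
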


{
\begin{remarks}\mbox{}
\begin{enumerate}
\item
The impact of the polynomial decay condition \eqref{eq:polydecay} is quite transparent here. If the eigenvalues decay exponentially, $\lambda_i=e^{-\alpha i}$ say, then a factor $e^{2\alpha C_\kappa\sqrt D}$ appears in the balanced oracle inequality, which we then also lose compared to the optimal minimax rate in strong norm. Polynomial decay ensures $\lambda_{\floor{t_{\mathfrak s}+C_\kappa \sqrt D}+1}^{-2}\lesssim \lambda_{t_{\mathfrak s}}^{-2}$ for $t_{\mathfrak s}\ge\sqrt D$. Intuitively, stopping about $\sqrt D$ steps later does not affect the rate under \eqref{eq:polydecay}, but does affect it under exponential singular value decay.
\item Compared with the main Theorem 3.5 in \cite{BHR2018} this is a proper oracle inequality since for the truncated SVD method $t_{\mathfrak w}\le t_{\mathfrak s}$ always holds. Note also that the more direct proof here gives much simpler and tighter bounds.
\end{enumerate}

\end{remarks}
}

\begin{proof}
By \eqref{EqBiasVarDecomp} we have
\[ \E\Big[\norm{\widehat\mu^{(\tau)}-\mu}^2-\norm{\widehat\mu^{(t_{\mathfrak s})}-\mu}^2\Big]\le \E\Big[\big(B_\tau^2(\mu)-B_{t_{\mathfrak s}}^2(\mu)\big)_++ \big( S_\tau-S_{t_{\mathfrak s}}\big)_+\Big].
\]
Combining Proposition \ref{PropBiastautstar} and Corollary \ref{CorTransfertstarts} we thus  obtain
\begin{align*}
\E\big[\norm{\widehat\mu^{(\tau)}-\mu}^2\big]&\le \E\big[\norm{\widehat\mu^{(t_{\mathfrak s})}-\mu}^2\big]\\
&\quad+ 81\lambda_{\floor{t_{\mathfrak s}\vee t^\star}+1}^{-2}\delta^2\big( t_{\mathfrak s}+\sqrt D +\abs{\kappa\delta^{-2}-D}\big)+r_{V,\tau}\delta^2.
\end{align*}
By \eqref{Eqtoracles} we have $t^\ast\le t_{\mathfrak s}+(D-\kappa\delta^{-2})_+$ and the first inequality follows.

Under \eqref{eq:polydecay} we use \eqref{EqrVtau}, $ V_{t^\ast} \lesssim (t_\ast)^{2p+1} \delta^2$, $\kappa \delta^{-2} \lesssim D$ and $t^\ast\le t_{\mathfrak s}+C_\kappa\sqrt D$ to further bound
\[ r_{V,\tau}\lesssim t_{\mathfrak s}^{2p}\sqrt{D}+D^{p+1/2},
\]
with a factor depending on $p,C_A,C_\kappa$. Finally, note
\begin{align*}
\E[\norm{\widehat\mu^{(t_{\mathfrak s})}-\mu}^2] &\le 2V_{t_{\mathfrak s}}\le  2V_{t_{\mathfrak s}\vee\sqrt D}\le 2\E[\norm{\widehat\mu^{(t_{\mathfrak s}\vee\sqrt D)}-\mu}^2],\\
 V_{t_{\mathfrak s}\vee \sqrt D} &\thicksim (t_{\mathfrak s}\vee\sqrt D)^{2p+1}\delta^2,
\end{align*}
and apply $\lambda_{\floor{t_{\mathfrak s}+C_\kappa \sqrt D}+1}^{-2}\lesssim (t_{\mathfrak s}\vee\sqrt D)^{2p}$ to deduce the second bound.
\end{proof}

Again, the proof only relies on concentration bounds for the residuals $R_m^2$ and easily extends to sub-Gaussian errors.
Let us now derive from Theorem \ref{thm_spectral_cutoff} an asymptotic minimax upper bound over the Sobolev-type ellipsoids $H^\beta(R,D)$. For $m_0=\floor{\sqrt D}+1$ the bound \eqref{EqMainBound} gives
\[ \E\big[\norm{\widehat\mu^{(\tau)}-\mu}^2\big]\le C\E\big[\norm{\widehat\mu^{(t_{\mathfrak s})}-\mu}^2\big]
\]
because of $t_{\mathfrak s}\ge m_0\ge \sqrt D$. Now, $t_{\mathfrak s}(\mu)\lesssim \tmmpar$ holds with the optimal truncation index $\tmmpar$ from \eqref{Eqtmm} for $\mu\in H^\beta(R,D)$ and
if $\tmmpar\in [m_0,D]$; under these conditions $\E[\norm{\widehat\mu^{(t_{\mathfrak s})}-\mu}^2]\lesssim {\cal R}^*_{\beta,p,R}(\delta)$  is thus true
and
we obtain the following adaptive upper bound:

\begin{corollary}\label{CorSobUB}
Assume  \eqref{eq:polydecay}, $\abs{\kappa-D\delta^2}\le C_\kappa\sqrt D\delta^2$  and choose $m_0=\floor{\sqrt D}+1$. Then there is a constant $C>0$, depending only on $p$, $C_A$ and $C_\kappa$, such that for all $(\beta,R)$ with $\tmmpar\in [\sqrt D,D]$
\[\sup_{\mu\in H^\beta(R,D)}\E_\mu\big[\norm{\widehat\mu^{(\tau)}-\mu}^2\big]\le C {\cal R}^*_{\beta,p,R}(\delta).
\]
\end{corollary}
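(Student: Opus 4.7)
The plan is to combine the balanced strong-norm oracle inequality \eqref{EqMainBound} with explicit bounds on $t_{\mathfrak s}(\mu)$ and $V_{t_{\mathfrak s}(\mu)}$ over the Sobolev ellipsoid; essentially all of the analytic work sits in Theorem~\ref{thm_spectral_cutoff}, and what remains is straightforward bookkeeping.

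First, with $m_0=\floor{\sqrt D}+1$ the definition of $t_{\mathfrak s}$ forces $t_{\mathfrak s}(\mu)\ge m_0>\sqrt D$, so that $t_{\mathfrak s}\vee\sqrt D=t_{\mathfrak s}$ and Theorem~\ref{thm_spectral_cutoff} yields
\[
\E_\mu\big[\norm{\widehat\mu^{(\tau)}-\mu}^2\big]\le C\,\E_\mu\big[\norm{\widehat\mu^{(t_{\mathfrak s})}-\mu}^2\big]\le 2C\,V_{t_{\mathfrak s}(\mu)},
\]
where the last inequality is the strongly balanced oracle bound contained in \eqref{EqBalClassOracle}.

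Second, for $\mu\in H^\beta(R,D)$ the Sobolev bias estimate $B_m^2(\mu)\le R^2 m^{-2\beta}$ from Section~\ref{se:asymptsetting}, together with the Riemann-sum lower bound $V_m\ge c_{p,C_A}\,\delta^2 m^{2p+1}$ coming from \eqref{eq:polydecay}, shows that the balancing inequality $B_m^2\le V_m$ holds for every $m$ exceeding a constant multiple of $\tmmpar$. The hypothesis $\tmmpar\in[\sqrt D,D]$ is precisely what ensures that such an index lies in the admissible window $[m_0,D]$, so that $t_{\mathfrak s}(\mu)\le \tilde C\,\tmmpar$ with $\tilde C$ depending only on $p,C_A$. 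Combining with the matching upper bound $V_t\lesssim \delta^2 t^{2p+1}$ and monotonicity of $V_t$, one obtains
\[
V_{t_{\mathfrak s}(\mu)}\le V_{\tilde C\tmmpar}\lesssim \delta^2\,\tmmpar^{2p+1}\thicksim \mathcal{R}^*_{\beta,p,R}(\delta),
\]
which chained with the previous display closes the proof.

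The only delicate point is the compatibility of the bound $t_{\mathfrak s}(\mu)\lesssim \tmmpar$ with the floor $t_{\mathfrak s}\ge m_0\thicksim\sqrt D$: this is exactly why the lower endpoint $\tmmpar\ge\sqrt D$ of the admissible range cannot be dropped. Below that threshold the early-stopping rule would be forced to stop no earlier than $m_0$, which already lies beyond the minimax oracle, and the rate would necessarily degrade from $\mathcal{R}^*$ to the non-adaptive rate driven by $V_{m_0}$. This boundary matches the lower bound of Corollary~\ref{CorSobLB} and confirms that the statement is sharp.
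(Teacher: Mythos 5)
Your proof is correct and follows the paper's own argument essentially line for line: apply \eqref{EqMainBound} after noting $t_{\mathfrak s}\ge m_0>\sqrt D$, bound $\E[\|\widehat\mu^{(t_{\mathfrak s})}-\mu\|^2]\le 2V_{t_{\mathfrak s}}$ via \eqref{EqBalClassOracle}, then use the Sobolev bias bound and polynomial-decay variance bounds to get $t_{\mathfrak s}(\mu)\lesssim \tmmpar$ and $V_{t_{\mathfrak s}}\lesssim \delta^2\tmmpar^{2p+1}\thicksim\mathcal{R}^*_{\beta,p,R}(\delta)^2$. You fill in the intermediate bookkeeping (the variance lower/upper Riemann-sum bounds and the explicit balancing-index comparison) that the paper leaves to a one-line assertion, and your closing remark on why $\tmmpar\ge\sqrt D$ is needed (to prevent $t_{\mathfrak s}$ being pinned at $m_0$ and $V_{m_0}$ dominating the rate) correctly identifies the same role that the paper only hints at.
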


 In summary, together with the matching lower bound of Corollary \ref{CorSobLB} this shows that the stopping rule $\tau$ is sequentially minimax  adaptive.

\section{An adaptive two-step procedure}

\label{se: two-step}

\subsection{Construction and results}

The lower bounds show that, in general, there is no hope for an early stopping rule attaining the order of the (unconstrained) oracle risk if the strongly balanced oracle $t_{\mathfrak s}$ is of smaller order than $\sqrt D$. We can therefore always start the stopping rule $\tau$ at some $m_0\gtrsim \sqrt D$. If, however, immediate stopping $\tau=m_0$ occurs, we might have stopped too late in the sense that $t_{\mathfrak s}\ll m_0$. To avoid this overfitting, we propose to run a second model selection step on $\{\widehat\mu^{(0)},\ldots,\widehat\mu^{(m_0)}\}$ in the event $\tau=m_0$.

Below, we shall formalise this procedure and prove that this combined model selection indeed achieves adaptivity,
that is, its risk is controlled by an oracle inequality. While violating the initial stopping rule prescription, we still gain substantially in terms of numerical complexity.
At the heart of this twofold model selection procedure is a simple observation of independence.

\begin{lemma}\label{LemIndeptau}
The stopping rule $\tau$ is independent of the estimators $\widehat\mu^{(0)},\ldots\widehat\mu^{(m_0)}$.
\end{lemma}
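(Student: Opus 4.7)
The plan is to show that the two sets of random variables in question are measurable with respect to disjoint groups of the independent coordinates $Y_1,\ldots,Y_D$.

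First, I would observe from the definition \eqref{eq:defest} that for every $m\in\{0,\ldots,m_0\}$ the estimator $\widehat\mu^{(m)}$ is a deterministic function of $(Y_1,\ldots,Y_{m_0})$, so the whole collection $\widehat\mu^{(0)},\ldots,\widehat\mu^{(m_0)}$ is measurable with respect to $\sigma(Y_1,\ldots,Y_{m_0})$.

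Next, I would examine the stopping rule. By \eqref{def early stopping}, $\tau \ge m_0$, and the event $\{\tau = m\}$ for $m\ge m_0$ is determined by the values $R_{m_0}^2,R_{m_0+1}^2,\ldots,R_m^2$. Using the explicit form $R_m^2 = \sum_{i=m+1}^D Y_i^2$ from \eqref{def residual}, each of these residuals involves only coordinates $Y_i$ with $i \ge m_0+1$. Consequently $\tau$ is measurable with respect to $\sigma(Y_{m_0+1},\ldots,Y_D)$.

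Since $(Y_i)_{1\le i\le D}$ are independent Gaussian random variables, the sigma-fields $\sigma(Y_1,\ldots,Y_{m_0})$ and $\sigma(Y_{m_0+1},\ldots,Y_D)$ are independent, and so are any random quantities measurable with respect to them. This yields the claimed independence. No real obstacle is expected here; the proof is essentially an accounting of which coordinates each quantity depends on, relying on the crucial fact that $\tau$ only starts looking at residuals from index $m_0$ onwards, which conveniently use only the tail coordinates.
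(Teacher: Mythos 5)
Your proof is correct and follows essentially the same reasoning as the paper's: both arguments identify that $\widehat\mu^{(0)},\ldots,\widehat\mu^{(m_0)}$ depend only on $(Y_1,\ldots,Y_{m_0})$, that $\tau$ is determined by the residuals $R_{m_0}^2,\ldots,R_D^2$ which depend only on $(Y_{m_0+1},\ldots,Y_D)$, and then invoke the independence of these two blocks of coordinates. No meaningful difference.
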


\begin{proof}
By construction, $\tau$ is measurable with respect to the $\sigma$-algebra $\sigma(R_{m_0}^2,\ldots,R_D^2)=\sigma(Y_{m_0+1}^2,\ldots,Y_D^2)$ and $\widehat\mu^{(m)}$ is $\sigma(Y_1,\ldots,Y_m)$-measurable. By the independence of $(Y_1,\ldots,Y_{m_0})$ and $(Y_{m_0+1},\ldots,Y_D)$ the claim follows.
\end{proof}

For the second step, we suppose that $\widehat m\in\{0,\ldots,m_0\}$ is obtained from any model selection procedure among $\{\widehat\mu^{(0)},\ldots,\widehat\mu^{(m_0)}\}$ that satisfies with a constant $C_2\ge 1$, for any signal $\mu$, the oracle inequality
\begin{equation}\label{EqMS2Oracle}
\E[\norm{\widehat\mu^{(\widehat m)}-\mu}^2]\le C_2\Big(\min_{m\in \{0,\ldots,m_0\}}\E[\norm{\widehat\mu^{(m)}-\mu}^2]+\delta^2\Big).
\end{equation}
Such an oracle inequality holds for standard procedures, for instance the AIC-criterion
\[ \widehat m\in\argmin_{m\in \{0,\ldots,m_0\}}\Big(-\sum_{i=1}^{m}\lambda_i^{-2}Y_i^2+2\delta^2\sum_{i=1}^m \lambda_i^{-2}\Big).
\]
We refer to Section 2.3 in Cavalier and Golubev \cite{CG} for the corresponding result and further discussion. If we are interested in a weak norm oracle inequality, the AIC-criterion takes the weak empirical risk and reduces to the minimisation of $-\sum_{i=1}^{m}Y_i^2+2m\delta^2$, which is classical.
Based on the lemma and the tools developed in the previous section, we prove in Appendix \ref{SecProp42} the following oracle inequality in an asymptotic setting.

\begin{proposition}\label{Prop2ndMS}
  Assume $D\ge 3$, \eqref{eq:polydecay}, $\abs{\kappa-D\delta^2}\le C_\kappa \sqrt D \delta^2$
  and set  $m_0=\floor{128\log(D)\sqrt D}+1$. Suppose the model selector $\widehat m$ satisfies \eqref{EqMS2Oracle} with $C_2\ge 1$.
Then there is a constant $C>0$, depending only on $p$, $C_A$, $C_\kappa$ and $C_2$, such that uniformly over all signals $\mu$ the estimator
\[
\widehat\mu^{(\rho)}=\begin{cases} \widehat\mu^{(\widehat m)}, &\text{ if }\tau=m_0,\\ \widehat\mu^{(\tau)},&\text{ if }\tau>m_0\end{cases}
\]
satisfies
\[\E\big[\norm{\widehat\mu^{(\rho)}-\mu}^2\big]\le C\Big( \min_{m\in\{0,\ldots,D\}}\E\big[\norm{\widehat\mu^{(m)}-\mu}^2\big]+\delta^2\Big).
\]
\end{proposition}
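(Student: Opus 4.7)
The plan is to split the risk according to whether the stopping rule halts at $m_0$ or later:
\[
\E\big[\|\widehat\mu^{(\rho)}-\mu\|^2\big] = T_1+T_2,\quad T_1:=\E\big[\|\widehat\mu^{(\widehat m)}-\mu\|^2\mathbf{1}(\tau=m_0)\big],\quad T_2:=\E\big[\|\widehat\mu^{(\tau)}-\mu\|^2\mathbf{1}(\tau>m_0)\big].
\]
By Lemma~\ref{LemIndeptau} the stopping rule $\tau$ is independent of $\widehat\mu^{(\widehat m)}$, so assumption~\eqref{EqMS2Oracle} yields $T_1\le C_2 P(\tau=m_0)\big(\min_{m\le m_0}\E[\|\widehat\mu^{(m)}-\mu\|^2]+\delta^2\big)$, while the trivial bound $T_2\le\E[\|\widehat\mu^{(\tau)}-\mu\|^2]$ combined with Theorem~\ref{thm_spectral_cutoff} delivers $T_2\le C\,\E[\|\widehat\mu^{(t_{\mathfrak s}\vee\sqrt D)}-\mu\|^2]$. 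Writing $\tilde t_{\mathfrak s}:=\inf\{t\ge 0:V_t\ge B_t^2(\mu)\}$ for the unconstrained strongly balanced oracle, one has $t_{\mathfrak s}=\max(m_0,\tilde t_{\mathfrak s})$, and I would argue by cases on the position of $\tilde t_{\mathfrak s}$ relative to $m_0$.

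When $\tilde t_{\mathfrak s}>m_0$, Theorem~\ref{thm_spectral_cutoff} together with~\eqref{EqBalClassOracle} immediately gives $T_2\lesssim V_{\tilde t_{\mathfrak s}}\lesssim\min_m\E[\|\widehat\mu^{(m)}-\mu\|^2]$. The delicate term is $T_1$, since $\min_{m\le m_0}\E[\|\widehat\mu^{(m)}-\mu\|^2]=B_{m_0}^2(\mu)+V_{m_0}$ may exceed $V_{\tilde t_{\mathfrak s}}$ by an arbitrarily large factor. I would establish the dichotomy: either $B_{m_0}^2(\mu)\le C_1 V_{\tilde t_{\mathfrak s}}$ for a constant $C_1=C_1(p,C_A,C_\kappa)$ (in which case $P(\tau=m_0)\le 1$ suffices), or $B_{m_0,\lambda}^2(\mu)\ge 2m_0\delta^2+2C_\kappa\sqrt D\,\delta^2$. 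This dichotomy rests on the PSD-derived lower bound $B_{m_0,\lambda}^2(\mu)\ge\lambda_{\tilde t_{\mathfrak s}}^2\big(B_{m_0}^2(\mu)-V_{\tilde t_{\mathfrak s}}\big)$ combined with $\lambda_{\tilde t_{\mathfrak s}}^2 V_{\tilde t_{\mathfrak s}}\gtrsim\tilde t_{\mathfrak s}\delta^2\ge m_0\delta^2$. In the exceptional branch, a Bernstein-type deviation for the non-central $\chi^2$ variable $R_{m_0}^2/\delta^2$ yields $P(\tau=m_0)\le\exp(-cy^2/(D+y))$ with $y:=B_{m_0,\lambda}^2/\delta^2\ge 4m_0$; combined with the rough upper bound $B_{m_0}^2(\mu)\le C_A^2 D^{2p}B_{m_0,\lambda}^2(\mu)$, the task reduces to bounding $D^{2p}\sup_{y\ge 4m_0} y\exp(-cy^2/(D+y))$, which is super-polynomially small in $D$ because $m_0^2/D\gtrsim\log^2 D$.

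When $\tilde t_{\mathfrak s}\le m_0$, the index $\lceil\tilde t_{\mathfrak s}\rceil$ already lies in $\{0,\ldots,m_0\}$, so $\min_{m\le m_0}\E[\|\widehat\mu^{(m)}-\mu\|^2]\lesssim V_{\tilde t_{\mathfrak s}}+\delta^2\lesssim\min_m\E[\|\widehat\mu^{(m)}-\mu\|^2]+\delta^2$ by~\eqref{EqBalClassOracle}, controlling $T_1$. For $T_2$ I split further at $m_0/2$: if $\tilde t_{\mathfrak s}\ge m_0/2$, the comparison $V_{m_0}\le 2^{2p+1}V_{m_0/2}\lesssim V_{\tilde t_{\mathfrak s}}$ provided by~\eqref{eq:polydecay} shows that Theorem~\ref{thm_spectral_cutoff} already gives the desired bound. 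If $\tilde t_{\mathfrak s}<m_0/2$, then the oracle-proxy satisfies $t^\ast\le\tilde t_{\mathfrak s}+C_\kappa\sqrt D\le 3m_0/4$ for $D$ large enough depending on $C_\kappa$, and the exponential inequality underlying Proposition~\ref{PropVartautstar} gives $P(\tau>m_0)\le\exp(-c(m_0/4)^2/D)\le D^{-c\log D}$ thanks to our choice of $m_0$. Using that on $\{\tau>m_0\}$ one has $B_\tau^2(\mu)\le B_{m_0}^2(\mu)\le B_{\tilde t_{\mathfrak s}}^2(\mu)\le V_{\tilde t_{\mathfrak s}}$, the decomposition $\|\widehat\mu^{(\tau)}-\mu\|^2\le V_{\tilde t_{\mathfrak s}}+S_D$ combined with Cauchy--Schwarz and $\E[S_D^2]\lesssim D^{4p+2}\delta^4$ yields $T_2\le V_{\tilde t_{\mathfrak s}}+\delta^2$, the polynomial factor $D^{2p+1}$ being absorbed by the super-polynomial $\sqrt{P(\tau>m_0)}$.

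The main obstacle is the dichotomy in the first case: the $D^{2p}$ prefactor relating strong to weak bias under~\eqref{eq:polydecay} has to be absorbed by the super-exponential smallness of $P(\tau=m_0)$, and this is exactly why the logarithmic inflation in the choice $m_0=\floor{128\log D\sqrt D}+1$ is required, instead of the bare $\sqrt D$ appearing in the lower bounds of Section~\ref{section residual filtration}.
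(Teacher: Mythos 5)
Your overall structure follows the paper closely: a case split on the position of the (unconstrained) strongly balanced oracle relative to $m_0$, a further split at $m_0/2$, and in the critical sub-case a deviation bound for $R_{m_0}^2$ combined with Cauchy--Schwarz and a polynomially large variance term killed by a super-polynomially small tail probability. Two points however deserve comment.

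\textbf{Case $\tilde t_{\mathfrak s}>m_0$.} You bound $T_1$ by $C_2 P(\tau=m_0)(B_{m_0}^2+V_{m_0}+\delta^2)$ and then mount a dichotomy argument to control the potentially huge term $P(\tau=m_0) B_{m_0}^2$, invoking \eqref{eq:polydecay} to relate strong and weak bias and a $\chi^2$-deviation bound on $R_{m_0}^2$. Your dichotomy is essentially correct, and the super-exponential smallness of $P(\tau=m_0)$ does absorb the $D^{2p}$ factor relating $B_{m_0}^2$ to $B_{m_0,\lambda}^2$. But this is a substantial detour: the paper observes that, since $\widehat\mu^{(m_0)}$ and $\tau$ are independent (Lemma~\ref{LemIndeptau}),
\[
\E\big[\norm{\widehat\mu^{(m_0)}-\mu}^2\big]\,P(\tau=m_0)
=\E\big[\norm{\widehat\mu^{(\tau)}-\mu}^2{\bf 1}(\tau=m_0)\big]
\le \E\big[\norm{\widehat\mu^{(\tau)}-\mu}^2\big],
\]
and then Theorem~\ref{thm_spectral_cutoff} together with the fact that $t_{\mathfrak s}>m_0$ makes the constraint inactive closes the argument in one line. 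The independence you use for pulling $P(\tau=m_0)$ out of the expectation works both ways, and exploiting this avoids the dichotomy entirely. As a side effect, the log in $m_0$ is not needed for this case --- it is only needed in the other critical sub-case, so your concluding remark about where the logarithmic inflation is required is slightly off.

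\textbf{Case $\tilde t_{\mathfrak s}<m_0/2$.} Here there is a genuine flaw: you write $t^\ast\le\tilde t_{\mathfrak s}+C_\kappa\sqrt D\le 3m_0/4$, but the oracle proxy is \emph{defined} as an infimum over $t\ge m_0$, so $t^\ast\ge m_0>3m_0/4$ always, and the claimed inequality is impossible. The inequality \eqref{Eqtoracles} gives $t^\ast\le t_{\mathfrak w}+(D-\kappa\delta^{-2})_+$, and when the weak balance point lies below $m_0$ we have $t_{\mathfrak w}=m_0$, so the best you get from that route is $t^\ast\le m_0+C_\kappa\sqrt D$, which does not produce a useful gap $m_0-t^\ast$. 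The correct route, which the paper takes, bypasses $t^\ast$ altogether: since $m_{\mathfrak w}\le m_{\mathfrak s}<m_0/2$ one has $B_{m_0,\lambda}^2(\mu)\le B_{m_{\mathfrak w},\lambda}^2(\mu)\le m_{\mathfrak w}\delta^2<(m_0/2)\delta^2$ and hence directly $\E[R_{m_0}^2]-\kappa\le-(m_0/2)\delta^2$ up to the $C_\kappa\sqrt D\delta^2$ slack; plugging this into the deviation inequality of Proposition~\ref{PropVartautstar}'s proof then gives $P(R_{m_0}^2>\kappa)\le 2D^{-2}$. Your subsequent use of Cauchy--Schwarz, the bound $\E[S_D^2]^{1/2}\lesssim V_D\sim D^{2p+1}\delta^2$, and the absorption by $P(\tau>m_0)^{1/2}$ is then exactly as in the paper.
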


In particular, $\widehat\mu^{(\rho)}$ is minimax adaptive over all Sobolev-type balls $H^\beta(R,D)$ at a usually much reduced computational complexity compared to standard model selection procedures requiring all $\widehat\mu^{(m)}$, $m=0,\ldots,D$.

\subsection{Numerical illustration}

\begin{figure}[t]
\parbox{0.48\textwidth}{\vspace*{-2mm}\includegraphics[width=0.48\textwidth]{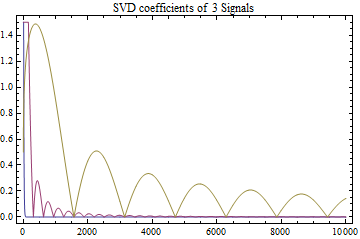}} \hfill \parbox{0.48\textwidth}{\includegraphics[width=0.48\textwidth]{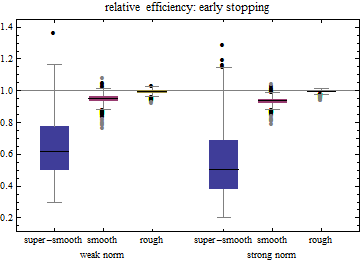}}
\caption{Left: SVD representation of a super-smooth (blue), a smooth (red) and a rough (olive) signal.
 Right: Relative efficiency for early stopping with $m_0=0$.}\label{Fig2}
\end{figure}

Let us  exemplify the procedure by some Monte Carlo results.
As a test bed we take the moderately ill-posed case $\lambda_i=i^{-1/2}$ with noise level $\delta=0.01$ and dimension $D=10\,000$. We consider early stopping at $\tau$ with $\kappa=D\delta^2=1$.

In Figure \ref{Fig2} (left), we see the SVD representation of three signals: a very smooth signal $\mu(1)$, a relatively smooth signal $\mu(2)$ and a  rough signal $\mu(3)$, the attributes coming from the interpretation via the decay of Fourier coefficients. The corresponding weakly balanced oracle indices $t_{\mathfrak w}$ are $(34,316,1356)$. The classical oracle indices in strong norm are $(43,504,1331)$. Figure \ref{Fig2} (right) shows box-plots of the relative efficiency of early stopping in 1000 Monte Carlo replications defined as
$\min_m\E[\norm{\widehat\mu^{(m)}-\mu}^2]^{1/2}/\norm{\widehat\mu^{(\tau)}-\mu}$, both for strong and weak norm. Ideally, the relative efficiency should concentrate around one. This is well achieved for the smooth and rough signals { and even better than for the corresponding Landweber results in \cite{BHR2018}.} The super-smooth case with its very small oracle risk suffers from the variability within the residual and attains on average  an efficiency of about $0.5$, meaning that its root mean squared error is about twice as large as the oracle error.
{ Let us mention that in unreported situations with higher ill-posedness  the {\it relative} efficiency is similarly good or even better.}

We are lead to consider the two-step procedure. According to Proposition \ref{Prop2ndMS} we have to choose an initial index somewhat larger than $\sqrt D$. The factor in the choice there is very conservative due to non-tight concentration bounds. For the implementation we choose $m_0$ such that for a zero signal $\mu=0$ the probability of $\{\tau>m_0\}=\{R_{m_0}^2>\kappa\}$ is about 0.01, when applying a normal approximation, that is $m_0=\floor{q_{0.99}\sqrt{2D}}+1=329$ with the $99\%$-percentile $q_{0.99}$ of $N(0,1)$. In Figure \ref{Fig3}(left) we see that with this choice for the super-smooth signal, 6 out of 1\,000 MC realisations lead to $\tau>m_0$, for the others we apply the second model selection step. The truncation for the smooth signal varies around $m_0$, and the second step is applied to about $50\%$ of the realisations. In the rough case, $\tau>m_0$ was always satisfied and no second model selection step was applied.

\begin{figure}[t]
\includegraphics[width=0.48\textwidth]{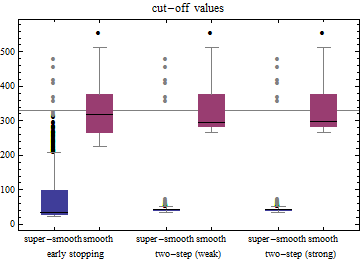} \hfill \includegraphics[width=0.48\textwidth]{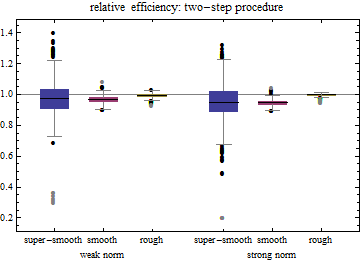}
\caption{Left: truncation levels for early stopping with $m_0=0$ and the two-step procedures with $m_0=\floor{q_{0.99}\sqrt{2D}}+1=329$, AIC in weak and strong norm. Right: Relative efficiencies for the two-step procedure.}\label{Fig3}
\end{figure}

As model selection procedure we apply the AIC-criterion, based on the weak and strong empirical norm for the weak and strong norm criterion, respectively. The results are shown in Figure \ref{Fig3}(right). We see that the efficiency for the super-smooth signal improves significantly (with the 6 outliers not being affected). The variability is still considerably higher than for the other two signals. { This phenomenon is well known for unbiased risk estimation. Especially for more strongly ill-posed problems, one should  penalise stronger, see the comparison with the risk hull approach in Cavalier and Golubev \cite{CG} and the numerical findings in Lucka {\it et al.} \cite{Bissetal}.} Here let us  rather emphasize that a pure AIC-minimisation for the super-smooth signal gives exactly the same result, apart from the 6 outliers, but requires to calculate the AIC-criterion for $D=10\,000$ indices in 1\,000 MC iterations. The two-step procedure, even for known SVD, is about
30 times faster.\\


\section{Appendix}  \label{Appendix}

\subsection{Proof of Corollary~\ref{cor:case1lower}} \label{SecPrCor22}

\begin{proof}

  For $i_0=\floor{(2C_\mu^2C_A)^{1/(2p+1)}(R^{-1}\delta)^{-2/(2\beta+2p+1)}}$, we can choose $c_2$ (in dependence of $C_\mu,C_A$)
  big enough so that our assumptions imply $i_0 \le D$ and
\[1-\frac{{\cal R}(\mu,\tau)^2}{V_{i_0+1}}\ge 1-C_\mu^2C_A \Bigg(\frac{(R^{-1}\delta)^{-2/(2\beta+2p+1)}}{i_0+1}\Bigg)^{1+2p}\ge \frac12.\]
Put $\bar\mu_i=\mu_i$ for $i\not= i_0+1$ and $\bar\mu_{i_0+1}=\tfrac12 \bar R(i_0+1)^{-\alpha}$. Then $\bar\mu\in H^\alpha(\bar R, D)$ follows from $\mu\in H^\beta(R, D)\subset H^\alpha(R, D)$ and $\bar R\ge 2R$. The bias bound $B_{i_0}^2(\bar\mu)\ge\tfrac14 \bar R^2(i_0+1)^{-2\alpha}$ inserted in Proposition \ref{prop:case1lower} yields the result.
\end{proof}

\subsection{Proof of Lemma \ref{lem:lowerstop}} \label{proof:lem:lowerstop}

\begin{proof}
With $S_m =\delta^2 \sum_{i=1}^m \lambda_i^{-2} \eps_i^2$ we obtain
\[
\cR(\mu,\tau) ^2 \ge  \mathbb E\Big[\delta^2 \sum_{i=1}^\tau \lambda_i^{-2} \eps_i^2\Big]
 \ge  \mathbb E\Big[\ind{\tau \ge m}S_m\Big].
\]
By Lemma 1 in Laurent and Massart \cite{LM}, for nonnegative weights $a_i$,  we have
\begin{equation}
    \label{lowerdevchi2}
    P\Big(\sum_{i = 1}^Da_i(\varepsilon_i^2-1) < -2\|a\|\sqrt{x} \Big)<e^{-x}.
\end{equation}
Picking $a=\delta^2 (\lambda_1^{-2},\ldots,\lambda_m^{-2})$ and $x=\log(5/4)$
so that $2\sqrt{x}\le 0.95$, with probability larger than
$1-e^{-x}=0.2$, it follows that
\[
S_m \ge \e{S_m} - 2 \norm{a} \sqrt{x} \ge \frac{\delta^2}{20} \sum_{i=1}^m \lambda_i^{-2} = \frac{V_m}{20},
\]
where we used $\norm{a} \leq  \sum_{i = 1}^ma_i= \e{S_m}=V_m$\, (observe that we could tighten
the latter inequality significantly under some additional assumptions on the singular value decay).
We now have
\begin{align*}
\cR(\mu,\tau)^2 & \ge \e{\ind{\tau \ge m} S_m} \\
&\ge V_m P(\set{\tau \ge m} \cap \set{S_m \ge V_m/20})/20\\
&\ge V_m \big(1 - P(\tau < m) - P(S_m < V_m/20)\big)/20 \\
&\ge V_m \big(0.2 - P(\tau < m)\big)/20.
\end{align*}
We deduce from this that $V_m \geq 200 \cR(\mu,\tau)^2$ implies $P(\tau \ge m) \le 0.9$.
\end{proof}

\subsection{A total variation bound for non-central $\chi^2$-laws}
\begin{lemma}\label{LemTVGamma}
Let $\vartheta = (\vartheta_1,\ldots, \vartheta_K) \in \R^K$ and $\PP_K^\theta$ be the non-central $\chi^2$-law of $X_\theta=\sum_{k=1}^K(\theta_k+Z_k)^2$ with $Z_k$ independent and standard Gaussian. Then, for $\theta,\bar\theta\in\R^K$ we have
\[ \norm{\PP_K^\theta-\PP_K^{\bar\theta}}_{TV}\le  e\frac{\abs{\norm{\theta}^2-\norm{\bar\theta}^2}+\sqrt{8/\pi}\abs{\norm{\theta}-\norm{\bar\theta}}}{\sqrt{\pi K}},\]
For $\norm{\theta}+\norm{\bar\theta}\ge \frac{\sqrt{8}e}{2\pi-\sqrt{\pi}e}\approx 5.248$ this bound simplifies to
\[ \norm{\PP_K^\theta-\PP_K^{\bar\theta}}_{TV}\le 2 \frac{\abs{\norm{\theta}^2-\norm{\bar\theta}^2}}{\sqrt{K}}.\]
\end{lemma}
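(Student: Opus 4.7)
My plan has three stages: a rotational reduction, a Poisson mixture decomposition, and an explicit control of the chi-squared difference via Stirling.

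First, by orthogonal invariance of the standard Gaussian $(Z_1,\ldots,Z_K)$, the law of $X_\theta=\|\theta+Z\|^2$ depends only on $\lambda:=\|\theta\|^2$; denote this common law by $\PP_K^\lambda$. The problem therefore reduces to bounding $\|\PP_K^\lambda-\PP_K^{\bar\lambda}\|_{TV}$, and the quantity $|\|\theta\|-\|\bar\theta\||=|\sqrt\lambda-\sqrt{\bar\lambda}|$ featured in the second summand of the lemma will enter as a square-root reparametrisation of $|\lambda-\bar\lambda|$.

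Second, I would use the classical Poisson mixture: conditionally on $N\sim\mathrm{Poisson}(\lambda/2)$, one has $X_\theta\sim\chi^2_{K+2N}$. Setting $p_n(\lambda)=e^{-\lambda/2}(\lambda/2)^n/n!$ and letting $g_n$ be the density of $\chi^2_{K+2n}$, the density of $\PP_K^\lambda$ is $f_\lambda=\sum_{n\ge 0} p_n(\lambda)\,g_n$. Abel summation with $R_n:=F_\lambda(n)-F_{\bar\lambda}(n)$ (Poisson CDF differences) yields
\[
  f_\lambda-f_{\bar\lambda}=\sum_{n\ge 0} R_n\,(g_n-g_{n+1}),\qquad \|f_\lambda-f_{\bar\lambda}\|_{L^1}\le\sum_{n\ge 0}|R_n|\,\|g_n-g_{n+1}\|_{L^1}.
\]
On the Poisson side, stochastic monotonicity of $\mathrm{Poisson}(\lambda/2)$ in $\lambda$ gives $R_n$ constant sign, hence $\sum_n|R_n|=W_1(\mathrm{Poi}(\lambda/2),\mathrm{Poi}(\bar\lambda/2))=|\lambda-\bar\lambda|/2$. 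For the chi-squared side, the ratio identity $g_{n+1}(x)/g_n(x)=x/(K+2n)$ yields $g_{n+1}-g_n=g_n(x)\cdot(x-(K+2n))/(K+2n)$, so a direct integration with $K'=K+2n$ delivers
\[
  \|g_n-g_{n+1}\|_{L^1}=\frac{\E|\chi^2_{K'}-K'|}{K'}=\frac{2\,(K'/2)^{K'/2}e^{-K'/2}}{\Gamma(K'/2+1)},
\]
and Stirling's lower bound $\Gamma(K'/2+1)\ge\sqrt{\pi K'}\,(K'/(2e))^{K'/2}$ yields the uniform estimate $\|g_n-g_{n+1}\|_{L^1}\le 2/\sqrt{\pi K}$. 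Combining these two ingredients and passing to total variation (division by $2$) gives a bound of order $|\|\theta\|^2-\|\bar\theta\|^2|/\sqrt{\pi K}$, accounting for the quadratic summand.

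The hard part will be producing the secondary linear term $\sqrt{8/\pi}\,|\|\theta\|-\|\bar\theta\||/\sqrt{\pi K}$, which matters only in the small non-centrality regime where the quadratic summand alone is insufficient. I would retain the next-order multiplicative correction in Stirling, so that the bound on $\|g_n-g_{n+1}\|_{L^1}$ splits into a principal piece scaling with $|\lambda-\bar\lambda|$ and a residual that, after re-parametrising via $\lambda=\|\theta\|^2$, pairs with $|\sqrt\lambda-\sqrt{\bar\lambda}|$; this is the route by which an overall constant $e$ naturally emerges. Alternatively, one can complement the quadratic bound by the Hellinger inequality $H^2(\PP_K^\lambda,\PP_K^{\bar\lambda})\le H^2(\mathrm N(\theta,I),\mathrm N(\bar\theta,I))=2(1-\exp(-|\|\theta\|-\|\bar\theta\||^2/8))$ obtained by data processing under the squared-norm map, then use $\|\cdot\|_{TV}\le H$ and combine additively. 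The simplified inequality valid for $\|\theta\|+\|\bar\theta\|\ge 5.248$ is then pure algebra: the threshold $\sqrt{8/\pi}/(2\sqrt\pi/e-1)\approx 5.25$ is exactly the point where the linear contribution is dominated by twice the quadratic one.
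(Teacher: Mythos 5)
Your Poisson-mixture argument is a genuinely different (and in fact sharper) route than the paper's. The paper rotates so that $X'_\theta = \norm{\theta}^2 + 2\norm{\theta}Z_1' + \norm{Z'}^2$, conditions on $Z_1'=z$ to get two translated $\chi^2(K-1)$ laws, bounds $\int_0^\infty\abs{f_p(x-t)-f_p(x)}\,dx$ via the mode of $f_p$ and Stirling, and then integrates over $z\sim N(0,1)$ -- the Gaussian cross term $2\norm{\theta}z$ is precisely what generates the linear correction $\abs{\norm{\theta}-\norm{\bar\theta}}$ in the stated bound, and the threshold condition on $\norm{\theta}+\norm{\bar\theta}$ is needed to reabsorb it. Your approach bypasses the cross term entirely: after the rotational reduction, the Poisson mixture $f_\lambda = \sum_n p_n(\lambda)\,g_{K+2n}$, Abel summation against $R_n = F_\lambda(n)-F_{\bar\lambda}(n)$, the Wasserstein identity $\sum_n\abs{R_n}=\abs{\lambda-\bar\lambda}/2$, and the mean-absolute-deviation computation $\norm{g_n - g_{n+1}}_{L^1}=\frac{2(K'/2)^{K'/2}e^{-K'/2}}{\Gamma(K'/2+1)}\le \frac{2}{\sqrt{\pi K}}$ (Stirling) are all correct. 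Carrying the constants through (with the convention $\norm{P-Q}_{TV}=\sup_A\abs{P(A)-Q(A)}=\frac12\int\abs{f-g}$), one gets
\[
\norm{\PP_K^\theta-\PP_K^{\bar\theta}}_{TV}\le \frac{\abs{\norm{\theta}^2-\norm{\bar\theta}^2}}{2\sqrt{\pi K}},
\]
unconditionally. Since $\tfrac{1}{2\sqrt\pi}<e/\sqrt\pi$ and $\tfrac{1}{2\sqrt\pi}<2$, this single inequality implies both displays in the lemma with room to spare and renders the threshold condition void. Consequently, the ``hard part'' you anticipate -- reproducing the linear term $\sqrt{8/\pi}\abs{\norm{\theta}-\norm{\bar\theta}}$ via higher-order Stirling or a Hellinger data-processing complement -- is unnecessary in your framework: that term is an artefact of the paper's decomposition, not an intrinsic feature of the bound. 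The only loose end in your write-up is that you leave the final constant as ``of order $\abs{\cdot}/\sqrt{\pi K}$''; nailing it down as above would make the proof cleaner than the paper's and would show you have in fact improved the lemma.
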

\begin{proof}
Writing $\theta=(\theta_k),Z=(Z_k)\in\R^k$ we see by orthogonal transformation that $X_\theta=\norm{\theta}^2+2\scapro{\theta}{Z}+\norm{Z}^2$ equals in law
$X'_\theta=\norm{\theta}^2+2\norm{\theta}Z_1'+\norm{Z'}^2$ with $Z_1',\ldots,Z_K'\sim N(0,1)$ i.i.d. We can therefore first consider the conditional law $\QQ_K^\theta(z)$ of $\PP_K^\theta$ given $\{Z_1'=z\}$, which is nothing but the $\chi^2(K-1)$-distribution translated by $\norm{\theta}^2+2\norm{\theta}z+z^2$.

If $f_p$ denotes the $\chi^2(p)$-density, then we have for any $t>0$ that $f_p(x-t)>f_p(x)$ holds iff $x\ge x_t=\frac{t}{1-e^{-t/(p-2)}}$. Thus, we obtain
\begin{multline*}
  \, \int_0^\infty\abs{f_p(x-t)-f_p(x)}\,dx \\
  \begin{aligned}
=&\, 2\int_0^\infty \big(f_p(x-t)-f_p(x)\big)_+\,dx\\
=&\, \frac{2^{1-p/2}}{\Gamma(p/2)}\int_{x_t}^\infty \big((1-t/x)^{p/2-1}e^{t/2}-1\big)x^{p/2-1}e^{-x/2}dx\\
=&\, \frac{2^{1-p/2}}{\Gamma(p/2)}\int_{x_t-t}^{x_t} x^{(p-2)/2}e^{-x/2}dx\\
\le&\, \frac{2^{(2-p)/2}}{\Gamma(p/2)} t(p-2)^{(p-2)/2}e^{-(p-2)/2},
  \end{aligned}
  \end{multline*}
knowing that $x=p-2$ is the mode of $f_p$. Stirling's formula guarantees $\Gamma(x)\ge \sqrt{2\pi/ x}(x/e)^x$ for all $x>0$ such that the last expression is always bounded by $t(\pi p)^{-1/2}e$. This yields
\[ \norm{\QQ_K^\theta(z)-\QQ_K^{\bar\theta}(z)}_{TV}\le e(\pi K)^{-1/2}\big|\norm{\theta}^2-\norm{\bar\theta}^2+2(\norm{\theta}-\norm{\bar\theta})z\big|.
\]
Taking expectation with respect to $Z_1'\sim N(0,1)$ we conclude
\[ \norm{\PP_K^\theta-\PP_K^{\bar\theta}}_{TV}\le e(\pi K)^{-1/2}\big|\norm{\theta}-\norm{\bar\theta}\big|\E\big[\big|\norm{\theta}+\norm{\bar\theta}+2Z_1'\big|\big].
\]
Using the triangle inequality and $\E[\abs{Z_1'}]=\sqrt{2/\pi}$, the upper bound follows.
\end{proof}

\subsection{Proof of Corollary~\ref{cor: residual LB1}}\label{SecCor25}

\begin{proof}
Set $\bar\mu_i=\mu_i$ for $i\not= i_0+1$ and $\bar\mu_{i_0+1}^2=\mu_{i_0+1}^2+\tfrac14 \bar R^2 (i_0+1)^{-2\alpha}$ for some $i_0\in\{1,\ldots,D\}$, so that  $\bar\mu\in H^\alpha(\bar R, D)$ and condition $(a)$ of Proposition \ref{prop:case2lower} is satisfied. If
\[\text{(b'): }\tfrac{C_A^2}4\bar R^2(i_0+1)^{-2(\alpha+p)} \le 0.025\delta^2\sqrt{D-i_0}\]
holds, then condition (b)  of Proposition \ref{prop:case2lower} is ensured, whereas
\[\text{(c'): }\bar R^2(i_0+1)^{-2(\alpha+p)}\ge 2 C_A^2 5.25^2\delta^2\]
implies condition (c)  of Proposition \ref{prop:case2lower}. Finally, for
\[\text{(d'): }i_0\ge\floor{(200(1+2p)C_A^{2} C_\mu^2)^{1/(2p+1)}(R^2\delta^{-2})^{1/(2\beta+2p+1)}},\]
we have $V_{i_0+1}\ge 200{\cal R}(\mu,\tau)^2$. Hence, by Proposition \ref{prop:case2lower},
(b')-(c')-(d') imply
\[{\cal R}(\bar\mu,\tau)^2\ge 0.05 B_{i_0}^2(\bar\mu)\ge \tfrac{0.05}{4}\bar R^2(i_0+1)^{-2\alpha}.\]
For
$i_0=\big\lfloor C_0\max\big((\bar R^2 \delta^{-2}/\sqrt{D})^{1/(2\alpha+2p)},(R^2\delta^{-2})^{1/(2\beta+2p+1)}\big)\big\rfloor$ with some suitably large constant $C_0>0$, depending only on $C_\mu,C_A,p$, and for $D\ge 2i_0$,  conditions (b') and (d') are satisfied. To check condition (c'), a sufficient condition is
$i_0+1 \le ( \bar R^2 \delta^{-2}/(56 C_A^2) )^{1/(2\alpha +2p)}$.
The first term in the maximum defining $i_0$ satisfies this condition
(here again using $D\ge 2 i_0$) provided $R^{-1}\delta$ is smaller than a suitable constant
$c_2'$ depending on $C_A, C_\mu, \alpha,p$. The second term in the maximum defining
$i_0$ satisfies the sufficient condition
\[
C_0 (R^2 \delta^{-2})^{1/(2\beta + 2p +1)}
\le C_0 (\bar R^2 \delta^{-2})^{1/(2\alpha + 2p +1)}
\le (\bar R^2 \delta^{-2}/ (56 C_A^2)  )^{1/(2\alpha +2p)},
\]
again as soon as $R^{-1}\delta$ is smaller than a suitable constant $c_2''$ depending
on the same parameters as $c_2'$. Finally, putting $c_2=\min(c_2',c_2'',1)$ and unwrapping the condition $D\ge 2i_0$,
yields (using $R\delta^{-1}\ge 1$) the sufficient condition $D\ge c_3' (\bar R^{2} \delta^{-2})^{1/(2\alpha+2p+1/2)}$,
which is equivalent to the assumption
$D\ge c_3 \tmm_{\alpha-\frac{1}{4},p,\bar R}(\delta)$ postulated in the statement,
for suitable $c_3',c_3$ depending on $C_\mu,C_A,\alpha,p$.
This yields the result.
\end{proof}

\subsection{Proof of Proposition \ref{PropBiasbound}}\label{SecProp35}

\begin{proof}
By partial summation, we deduce from $B_{\tau,\lambda}^2(\mu)>B_{t^\ast,\lambda}^2(\mu)$, which implies $\tau\le\floor{t^\ast}$:
\begin{align*}
  \E[(B_{\tau,\lambda}^2(\mu)-B_{t^\ast,\lambda}^2(\mu))_+]
&= \sum_{m=m_0}^{\floor{t^\ast}}(B_{m,\lambda}^2(\mu)-B_{t^\ast,\lambda}^2(\mu)) P(\tau=m)\\
  &
   = \sum_{m=m_0}^{\floor{t^\ast}}
    \sum_{k=m}^{\floor{t^\ast}} (B_{k,\lambda}^2(\mu)-B_{(k+1) \wedge t^\ast,\lambda}^2(\mu)) P(\tau=m)\\
&= \sum_{m=m_0}^{\floor{t^\ast}}(B_{m,\lambda}^2(\mu)-B_{(m+1)\wedge t^\ast,\lambda}^2(\mu))P(\tau\le m).
\end{align*}
In the case $t^\ast=m_0$ all expressions evaluate to zero because of $\tau\ge t^\ast$ and we  suppose $t^\ast>m_0$ from now on, so that \eqref{EqRtstarkappa} holds.
For $m_0\le m<t^\ast$ we have  $\{\tau\le m\}=\{R_m^2\le \kappa\}$, $\E[R_m^2]\geq \kappa$ and  by Lemma 1 in \cite{LM} (recall \eqref{lowerdevchi2} in the proof of Lemma \ref{lem:lowerstop}) together with $P(Z<-x)\le e^{-x^2/(2\sigma^2)}$ for $Z\sim N(0,\sigma^2)$, $x\ge 0$ we obtain  the bound
\begin{align*}
&P(R_m^2\le\kappa) =P\Big(\sum_{i=m+1}^D\big(\delta^2(\eps_i^2-1)+2\lambda_i\mu_i\delta\eps_i\big)\le -(\E[R_m^2]-\kappa)\Big)\\
&\le P\Big(\sum_{i=m+1}^D\delta^2(\eps_i^2-1)\le -\frac{\E[R_m^2]-\kappa}{2}\Big) +P\Big(\sum_{i=m+1}^D\lambda_i\mu_i\delta\eps_i\le -\frac{\E[R_m^2]-\kappa}{4}\Big)\\
&\le \exp\Big(-\frac{(\E[R_m^2]-\kappa)^2}{16\delta^4(D-m)}\Big)+ \exp\Big(-\frac{(\E[R_m^2]-\kappa)^2}{32\delta^2B_{m,\lambda}^2(\mu)}\Big)\\
&\le F(B_{m,\lambda}^2(\mu)-B_{t^\ast,\lambda}^2(\mu)),
\end{align*}
where we use $\E[R_m^2]-\kappa\ge B_{m,\lambda}^2(\mu)-B_{t^\ast,\lambda}^2(\mu)$
for $m<t^*$ (see \eqref{expresidual}, \eqref{EqRtstarkappa}), and put
\[F(z):= \exp\Big(-\frac{z^2}{16\delta^4D}\Big)+ \exp\Big(-\frac{z^2}{32\delta^2(B_{t^\ast,\lambda}^2(\mu)+z)}\Big),\quad z\ge 0.
\]
We conclude by monotonicity of $B_{\cdot,\lambda}^2(\mu)$ and $F$  via a Riemann-Stieltjes sum approximation:
\begin{align*}
&\E\big[\big(B_{\tau,\lambda}^2(\mu)-B_{t^\ast,\lambda}^2(\mu)\big)_+\big]\\
&\le \sum_{m=m_0}^{\floor{t^\ast}}(B_{m,\lambda}^2(\mu)-B_{(m+1)\wedge t^\ast,\lambda}^2(\mu)) F(B_{m,\lambda}^2(\mu)-B_{t^\ast,\lambda}^2(\mu)) \\
&\le \int_{B_{t^\ast,\lambda}^2(\mu)}^{B_{m_0,\lambda}^2(\mu)} F(y-B_{t^\ast,\lambda}^2(\mu))\,dy\\
&\le \int_0^\infty F(z)\,dz\\
&\le \sqrt{4\pi\delta^4D}+\int_0^{B_{t^\ast,\lambda}^2(\mu)} \exp\Big(-\frac{z^2}{64\delta^2B_{t^\ast,\lambda}^2(\mu)}\Big)\,dz+\int_{B_{t^\ast,\lambda}^2(\mu)}^\infty  \exp\Big(-\frac{z}{64\delta^2}\Big)\,dz\\
&\le \sqrt{4\pi\delta^4D}+\sqrt{16\pi\delta^2B_{t^\ast,\lambda}^2(\mu)}+64\delta^2 \\
&\le \big(17\sqrt{D}+64\big)\delta^2+B_{t^\ast,\lambda}^2(\mu) D^{-1/2},
\end{align*}
using  $4\sqrt\pi\delta B_{t^\ast,\lambda}(\mu)\le 4\pi \sqrt{D}\delta^2+D^{-1/2}B_{t^\ast,\lambda}^2(\mu)$ by the binomial identity and $\sqrt{4\pi}+4\pi\le 17$ in the last line.
\end{proof}

\subsection{Proof of Proposition \ref{PropVartautstar}}\label{SecProp37}

\begin{proof}
By the Cauchy-Schwarz inequality and $\E[\eps_m^4]^{1/2}=\sqrt 3$, we have
\begin{align*}
  \E\Big[(S_\tau-S_{\floor{t^\ast}+1})_+\Big]&= \delta^2
  \sum_{m=\floor{t^\ast}+2}^{D}\lambda_m^{-2}\E[\eps_m^2{\bf 1}(\tau\ge m)]\\
&\le \sqrt 3 \delta^2 \sum_{m=\floor{t^\ast}+2}^{D}\lambda_m^{-2}P(\tau\ge m)^{1/2}.
\end{align*}
For $m\ge t^\ast+1\ge m_0+1$ we have  $\{\tau\ge m\}=\{R_{m-1}^2> \kappa\}$,
$\E[R^2_{m-1}]  \leq \kappa$ and by $P(\sum_{i=1}^Da_i(\varepsilon_i^2-1) > x) \leq \exp\big(-x^2/(4\|a\|^2+4x \max_i a_i )\big)$ for $a_i\ge 0$ (Lemma 1 in \cite{LM}) together with $P(Z>x)\le e^{-x^2/(2\sigma^2)}$ for $Z\sim N(0,\sigma^2)$, $x\ge 0$, we obtain the bound
\begin{align*}
&P(R_{m-1}^2>\kappa) =P\Big(\sum_{i=m}^D\big(\delta^2(\eps_i^2-1)+2\lambda_i\mu_i\delta\eps_i\big)>\kappa -\E[R_{m-1}^2]\Big)\\
&\le P\Big(\sum_{i=m}^D\delta^2(\eps_i^2-1)\geq \frac{\kappa-\E[R_{m-1}^2]}{2}\Big) +P\Big(\sum_{i=m}^D\lambda_i\mu_i\delta\eps_i\geq \frac{\kappa-\E[R_{m-1}^2]}{4}\Big)\\
&\le \exp\Big(-\frac{(\kappa-\E[R_{m-1}^2])^2}{16\delta^4(D-m+1)+ 8\delta^2(\kappa-\E[R_{m-1}^2])}\Big)+ \exp\Big(-\frac{(\kappa-\E[R_{m-1}^2])^2}{32\delta^2B_{m-1,\lambda}^2(\mu)}\Big).
\end{align*}
For the numerator, we use the lower bound
$$\kappa-\E[R_{m-1}^2]\ge \kappa-\E[R_{t^\ast}^2]+\delta^2(m-1-t^\ast)\ge \delta^2(m-1-t^\ast).$$
For the denominators, we use $16\delta^4(D-m+1)+ 8\delta^2(\kappa-\E[R_{m-1}^2]) \leq 16\delta^4D+ 8\delta^2\kappa$ for the first term and $32\delta^2B_{m-1,\lambda}^2(\mu)\le 32\delta^2B_{t^\ast,\lambda}^2(\mu)\le 32\delta^2\kappa$ for the second term. We arrive at
\begin{align*}
\E\Big[(S_\tau-S_{\floor{t^\ast}+1})_+\Big]
&\le 2\sqrt 3 \delta^2 \sum_{m=\floor{t^\ast}+2}^D \lambda_{m}^{-2}\exp\Big(-\frac{(m-1-t^\ast)^2}{16D+32\kappa
 \delta^{-2}}\Big).
\end{align*}
We add $\E[(S_{\floor{t^\ast}+1}-S_{t^\ast})_+]\le 2\sqrt 3 \delta^2\lambda_{\floor{t^\ast}+1}^{-2}$ and note
$(S_\tau-S_{t^\ast})_+\le S_D$, which gives the trivial bound $\E[S_D]=D\delta^2$. Under the polynomial eigenvalue decay this yields the bound
\[ r_{V,\tau}\le 2\sqrt 3 C_A^{2}\Big(1+\sum_{k\ge 0} (t^\ast+1+k)^{2p}e^{-k^2/(16D+32\kappa\delta^{-2})}\Big)\wedge D.\]
In the sequel  '$\lesssim$', '$\gtrsim$' denote inequalities up to a factor only depending on $p$. A Riemann sum approximation shows for any $R>0$
\begin{align*}
\sum_{k\ge 0} e^{-k^2/R} &\le 1+\int_0^\infty e^{-x^2/R}dx \lesssim \sqrt{R}.
\end{align*}
Similarly, we obtain
\begin{align*}
\sum_{k\ge 0} k^{2p}e^{-k^2/R} &\le \int_0^\infty (1+x)^{2p}e^{-x^2/R}dx
\lesssim R^{p+1/2}.
\end{align*}
This yields
\[ r_{V_\tau}\lesssim C_A^{2}\Big((t^\ast)^{2p}\sqrt{D+\kappa\delta^{-2}}+(D+\kappa\delta^{-2})^{p+1/2}\Big).\]
On the other hand, we have
\[V_{t^\ast}=\delta^2\Big(\sum_{m=1}^{\floor{t^\ast}}\lambda_m^{-2}+(t^\ast-\floor{t^\ast})\lambda_{\floor{t^\ast}+1}^{-2}\Big) \gtrsim \delta^2 C_A^{-2}(t^\ast)^{2p+1},\]
implying the result with a suitable constant $C_p$.
\end{proof}

\subsection{Proof of Proposition \ref{Prop2ndMS}} \label{SecProp42}

\begin{proof}
  In this proof '$\lesssim$' denotes an inequality holding up to factors depending only on $p$, $C_A$, $C_\kappa$ and $C_2$;
  similarly, '$\sim$' denotes a two-sided inequality holding up to factors depending on these parameters.
In the case $t_{\mathfrak s}> m_0$ we  use the independence of $\tau$ from $\widehat\mu^{(0)},\ldots\widehat\mu^{(m_0)}$ by Lemma \ref{LemIndeptau}
to obtain
\begin{align*}
\E[\norm{\widehat\mu^{(\rho)}-\mu}^2{\bf 1}(\tau=m_0)]&= \E[\norm{\widehat\mu^{(\widehat m)}-\mu}^2]P(\tau=m_0)\\
& \le C_2\Big(\E[\norm{\widehat\mu^{(m_0)}-\mu}^2]+\delta^2\Big)P(\tau=m_0)\\
& = C_2\Big(\E[\norm{\widehat\mu^{(\tau)}-\mu}^2{\bf 1}(\tau=m_0)]+\delta^2 P(\tau=m_0)\Big).
\end{align*}
On $\{\tau>m_0\}$ we have $\rho=\tau$ and we apply Theorem \ref{thm_spectral_cutoff} with $t_{\mathfrak s}\ge m_0>\sqrt{D}$ to get
\[ \E[\norm{\widehat\mu^{(\rho)}-\mu}^2{\bf 1}(\tau>m_0)]\le \E[\norm{\widehat\mu^{(\tau)}-\mu}^2]\lesssim \E[\norm{\widehat\mu^{(t_{\mathfrak s})}-\mu}^2].
\]
Because of $t_{\mathfrak s}>m_0$ we have $\E[\norm{\widehat\mu^{(t_{\mathfrak s})}-\mu}^2]\le 2\min_{t\in[0,D]}\E[\norm{\widehat\mu^{(t)}-\mu}^2]$.
This gives the result in this case.

Next, consider the case $t_{\mathfrak s}= m_0$ where  $B_{m_0}^2(\mu)\le V_{m_0}$. Then the estimator $\widehat\mu^{(m_{\mathfrak s})}$ with $m_{\mathfrak s}\in\{0,1,\ldots,m_0\}$ from \eqref{Eqms} satisfies
\[\E\big[\norm{\widehat\mu^{(m_{\mathfrak s})}-\mu}^2\big]\le 2\max_i\frac{\lambda_i^2}{\lambda_{i+1}^2} \min_{m\in\{0,\ldots,D\}}\E\big[\norm{\widehat\mu^{(m)}-\mu}^2\big],
\]
noting that the factor $\max_i\lambda_i^2/\lambda_{i+1}^2$ comes from the discretisation $m_{\mathfrak s}$ of the balanced oracle and is bounded by
$C_A^42^{2p}\lesssim 1$. Given the independence of $\tau$ from $\{\widehat\mu^{(0)},\ldots,\widehat\mu^{(m_0)}\}$ by Lemma \ref{LemIndeptau} and the properties of the model selector $\widehat m$, we have
\begin{align*}
E[\norm{\widehat\mu^{(\rho)}-\mu}^2{\bf 1}(\tau=m_0)] &\le C_2\Big(\E[\norm{\widehat\mu^{(m_{\mathfrak s})}-\mu}^2]+\delta^2\Big)\\
&\lesssim \min_{m\in\{0,\ldots,D\}}\E[\norm{\widehat\mu^{(m)}-\mu}^2]+\delta^2.
\end{align*}
For $m_{\mathfrak s}\in [m_0/2,m_0]$
\[ \E[\norm{\widehat\mu^{(m_0)}-\mu}^2]\le B_{m_{\mathfrak s}}^2(\mu)+V_{m_0}\lesssim \E[\norm{\widehat\mu^{(m_{\mathfrak s})}-\mu}^2]\]
follows from $V_{m_0}\thicksim \delta^2 m_0^{2p+1}\thicksim V_{m_{\mathfrak s}}$. By  Theorem \ref{thm_spectral_cutoff} with $t_{\mathfrak s}=m_0$ this gives
\begin{align*}
\E[\norm{\widehat\mu^{(\rho)}-\mu}^2{\bf 1}(\tau>m_0)] &\lesssim \E[\norm{\widehat\mu^{(m_0)}-\mu}^2]\lesssim \E[\norm{\widehat\mu^{(m_{\mathfrak s})}-\mu}^2]\\
& \lesssim \min_{m\in\{0,\ldots,D\}}\E[\norm{\widehat\mu^{(m)}-\mu}^2].
\end{align*}
For $m_{\mathfrak s}<m_0/2$ we obtain by $\{\tau>m_0\}=\{R_{m_0}^2>\kappa\}$, $S_\tau\le S_D$, $B_\tau^2(\mu)\le B_{m_0}^2(\mu)$ and the Cauchy-Schwarz inequality:
\begin{align*}
\E[\norm{\widehat\mu^{(\rho)}-\mu}^2{\bf 1}(\tau>m_0)] &\le\E[(B_{m_0}^2(\mu)+S_D){\bf 1}(R_{m_0}^2>\kappa)]\\
&\le B_{m_0}^2(\mu)P(R_{m_0}^2>\kappa)+\E[S_D^2]^{1/2}P(R_{m_0}^2>\kappa)^{1/2}.
\end{align*}
We have $B_{m_0}^2(\mu)\le B_{m_{\mathfrak s}}^2(\mu)\le V_D$ and $\E[S_D^2]^{1/2}\lesssim \E[S_D] = V_D$ (by comparison of Gaussian moments), so that
\[ \E[\norm{\widehat\mu^{(\rho)}-\mu}^2{\bf 1}(\tau>m_0)]\lesssim V_DP(R_{m_0}^2>\kappa)^{1/2}\thicksim \delta^2DP(R_{m_0}^2>\kappa)^{1/2}.
\]
Observing $m_{\mathfrak w}:=\min\{m\ge 0\,|\, B_{m,\lambda}^2(\mu)\le V_{m,\lambda}\}\le m_{\mathfrak s}<m_0/2$ we obtain
\[\E[R_{m_0}^2]-\kappa=B_{m_0,\lambda}^2(\mu)-V_{m_0,\lambda}\le B_{m_{\mathfrak w},\lambda}^2(\mu)-m_0\delta^2\le -(m_0/2)\delta^2.\]
As in the proof of Proposition \ref{PropVartautstar} we therefore find
\[ P(R_{m_0}^2>\kappa)\le \exp\Big(-\frac{m_0^2}{64(D-m_0)+ 16m_0}\Big)+ \exp\Big(-\frac{m_0^2\delta^2}{128B_{m_0,\lambda}^2(\mu)}\Big).
\]
By the choice of $m_0$ and $B_{m_0,\lambda}^2(\mu)\le B_{m_0/2,\lambda}^2(\mu)\le (m_0/2)\delta^2$,
 using  $D\geq 3\Rightarrow \log D \geq 1 $, we deduce
\[ P(R_{m_0}^2>\kappa)\le 2\exp\big(-2\log D\big)=2D^{-2}.
\]
Insertion of this bound yields
$ \E[\norm{\widehat\mu^{(\rho)}-\mu}^2{\bf 1}(\tau>m_0)]\lesssim \delta^2$,
which accomplishes the proof for the case $t_{\mathfrak s}= m_0$ and $m_{\mathfrak s}\le m_0/2$.
\end{proof}

\section*{Acknowledgement}
Very instructive hints and questions by Thorsten Hohage, Alexander Golden\-shluger, Peter Math\'e and two anonymous referees are gratefully acknowledged.
 This research was supported by the DFG via SFB 1294 {\it Data Assimilation} and via Research Unit 1735 {\it Structural Inference in Statistics}, which has made possible a six months visit by MH to Humboldt-Universit\"at zu Berlin.

\bibliographystyle{plain}       
\bibliography{biblioBHR}           

\begin{thebibliography}{10}

\bibitem{BHMR}
N.~Bissantz, T.~Hohage, A.~Munk, and F.~Ruymgaart.
\newblock Convergence rates of general regularization methods for statistical
  inverse problems and applications.
\newblock {\em SIAM Journal on Numerical Analysis}, 45:2610--2636, 2007.

\bibitem{BHR2018}
G.~Blanchard, M.~Hoffmann, and M.~Rei\ss.
\newblock Optimal adaptation for early stopping in statistical inverse
  problems.
\newblock {\em SIAM Journal of Uncertainty Quantification}, 6:1043--1075, 2018.

\bibitem{BM}
G.~Blanchard and P.~Math\'e.
\newblock Discrepancy principle for statistical inverse problems with
  application to conjugate gradient iteration.
\newblock {\em Inverse Problems}, 28:pp. 115011, 2012.

\bibitem{BH}
P.~B{\"u}hlmann and T.~Hothorn.
\newblock Boosting algorithms: Regularization, prediction and model fitting.
\newblock {\em Statistical Science}, pages 477--505, 2007.

\bibitem{C}
L.~Cavalier.
\newblock Inverse problems in statistics.
\newblock In {\em Inverse problems and high-dimensional estimation}, pages
  3--96. Lecture Notes in Statistics 203, Springer, 2011.

\bibitem{CGPT}
L.~Cavalier, G.K.~Golubev D.~Picard, and A.B. Tsybakov.
\newblock Oracle inequalities for inverse problems.
\newblock {\em Annals of Statistics}, 30:843--874, 2002.

\bibitem{CG}
L.~Cavalier and Y.~Golubev.
\newblock Risk hull method and regularization by projections of ill-posed
  inverse problems.
\newblock {\em Annals of Statistics}, 34:1653--1677, 2006.

\bibitem{CGK}
E.~Chernousova, Y.~Golubev, and E.~Krymova.
\newblock Ordered smoothers with exponential weighting.
\newblock {\em Electronic Journal of Statistics}, 7:2395--2419, 2013.

\bibitem{CHR}
A.~Cohen, M.~Hoffmann, and M.~Rei\ss.
\newblock Adaptive wavelet {Galerkin} methods for linear inverse problems.
\newblock {\em SIAM Journal on Numerical Analysis}, 42(4):1479--1501, 2004.

\bibitem{EHN}
H.~Engl, M.~Hanke, and A.~Neubauer.
\newblock {\em Regularization of Inverse Problems.}
\newblock Kluwer Academic Publishers, London, 1996.

\bibitem{IS}
Y.~Ingster and I.~Suslina.
\newblock {\em Nonparametric goodness-of-fit testing under Gaussian models}.
\newblock Lecture Notes in Statistics 169, Springer, 2012.

\bibitem{LM}
B.~Laurent and P.~Massart.
\newblock Adaptive estimation of a quadratic functional by model selection.
\newblock {\em The Annals of Statistics}, 28:1302--1338, 2000.

\bibitem{L}
O.~Lepski.
\newblock Some new ideas in nonparametric estimation.
\newblock {\em arXiv:1603.03934}, 2016.

\bibitem{Bissetal}
F.~Lucka, K.~Proksch, C.~Brune, N.~Bissantz, M.~Burger, H.~Dette, and
  F.~W\"ubbeling.
\newblock Risk estimators for choosing regularization parameters in ill-posed
  problems - properties and limitations.
\newblock e-print, arXiv:1701.04970, 2017.

\bibitem{MP}
P.~Math{\'e} and S.~V. Pereverzev.
\newblock Geometry of linear ill-posed problems in variable hilbert scales.
\newblock {\em Inverse problems}, 19(3):789, 2003.

\bibitem{RW}
G.~Raskutti, M.~Wainwright, and B.~Yu.
\newblock Early stopping and non-parametric regression: An optimal
  data-dependent stopping rule.
\newblock {\em Journal of Machine Learning Research}, 15:335--366, 2014.

\bibitem{Saad}
Y.~Saad.
\newblock {\em Numerical Methods for Large Eigenvalue Problems: Revised
  Edition}.
\newblock Society for Industrial and Applied Mathematics (SIAM), 2011.

\bibitem{Ts}
A.~B. Tsybakov.
\newblock {\em Introduction to nonparametric estimation.}
\newblock Springer Series in Statistics. Springer, New York, 2009.

\bibitem{Wa}
G.~Wahba.
\newblock Practical approximate solutions to linear operator equations when the
  data are noisy.
\newblock {\em SIAM Journal on Numerical Analysis}, 14(4):651--667, 1977.

\bibitem{YRC}
Y.~Yao, L.~Rosasco, and A.~Caponnetto.
\newblock On early stopping in gradient descent learning.
\newblock {\em Constructive Approximation}, 26(2):289--315, 2007.

\end{thebibliography}

\checknbdrafts

\end{document}